\DeclareFontFamily{OT2}{cmr}{\hyphenchar\font45}
\DeclareFontShape{OT2}{cmr}{m}{l}{%
<5><6><7><8><9>gen*wncyr%
<10><10.95><12><14.4><17.28><20.74><24.88>wncyr10}{}
\DeclareMathAlphabet{\mathcyr}{OT2}{cmr}{m}{l}
\newtheorem{thm}{Theorem}[section]
\newtheorem{lem}[thm]{Lemma}
\newtheorem{prop}[thm]{Proposition}
\newtheorem{cor}[thm]{Corollary}
\theoremstyle{definition}
\newtheorem{defn}[thm]{Definition}
\theoremstyle{remark}
\newtheorem{rem}[thm]{Remark}
\newcommand{\harb}{\mathbin{\overline{\ast}}}
\newcommand{\hart}{\mathbin{\overset{t}{\ast}}}
\newcommand{\sh}{\mathbin{\mathcyr{sh}}}
\newcommand{\sha}{\mathbin{\widetilde{\mathcyr{sh}}}}
\newcommand{\sht}{\mathbin{\overset{t}{\mathcyr{sh}}}}
\newcommand{\ba}{\boldsymbol{a}}
\newcommand{\be}{\boldsymbol{e}}
\newcommand{\bk}{\boldsymbol{k}}
\newcommand{\bm}{\boldsymbol{m}}
\newcommand{\bp}{\boldsymbol{p}}
\newcommand{\bF}{\mathbb{F}}
\newcommand{\bQ}{\mathbb{Q}}
\newcommand{\bR}{\mathbb{R}}
\newcommand{\bZ}{\mathbb{Z}}
\newcommand{\cA}{\mathcal{A}}
\newcommand{\cF}{\mathcal{F}}
\newcommand{\cI}{\mathcal{I}}
\newcommand{\cS}{\mathcal{S}}
\newcommand{\cZ}{\mathcal{Z}}
\newcommand{\fH}{\mathfrak{H}}
\newcommand{\fS}{\mathfrak{S}}
\newcommand{\fZ}{\mathfrak{Z}}
\DeclareMathOperator{\dep}{dep}
\DeclareMathOperator{\wt}{wt}
\DeclareMathOperator{\supp}{supp}
\begin{document}

\title[Yamamoto's interpolation of FMZVs and FMZSVs]{Yamamoto's interpolation of finite multiple zeta and zeta-star values}

\author{Hideki Murahara}
\address[Hideki Murahara]{Nakamura Gakuen University Graduate School,
 5-7-1, Befu, Jonan-ku, Fukuoka, 814-0198, Japan} 
\email{hmurahara@nakamura-u.ac.jp}

\author{Masataka Ono}
\address[Masataka Ono]{Multiple Zeta Research Center, Kyushu University, 744, Motooka, Nishi-ku,
Fukuoka, 819-0395, Japan} 
\email{m-ono@math.kyushu-u.ac.jp}

\keywords{Multiple zeta(-star) values, Interpolated multiple zeta values, Finite multiple zeta(-star) values, Symmetric multiple zeta(-star) values}
\subjclass[2010]{Primary 11M32; Secondary 05A19}
\thanks{This research was supported in part by JSPS KAKENHI Grant Numbers 16H06336.}

\begin{abstract}
 We study a polynomial interpolation of finite multiple zeta and zeta-star values with variable $t$, 
 which is an analogue of interpolated multiple zeta values introduced by Yamamoto. 
 We introduce several relations among them and, in particular, prove the cyclic sum formula, the Bowman--Bradley type formula, and the weighted sum formula. 
 The harmonic relation, the shuffle relation, the duality relation, and the derivation relation are also presented.  
\end{abstract}

\maketitle

\section{Introduction}
\subsection{Interpolated multiple zeta values}
The notion of interpolated multiple zeta values ($t$-MZVs) which was introduced by Yamamoto \cite{Yam13} is an interpolation polynomial of multiple zeta values (MZVs) and multiple zeta-star values (MZSVs) for a fixed index. 

An index is a sequence of positive integers.
For an index $\bk=(k_{1},\ldots,k_{r})$, 
the integer $k\coloneqq k_{1}+\cdots+k_{r}$ is called the weight of $\bk$
(denoted by $\wt(\bk)$) and the integer $r$ is called
the depth of $\bk$ (denoted by $\dep(\bk)$).
Then, for an index $(k_{1},\ldots,k_{r})$ with $k_r \ge2$, the MZVs and the MZSVs are defined by 
\begin{align*}
 \zeta(k_1,\dots, k_r)
 &\coloneqq \sum_{\substack{1\le n_1<\cdots <n_r \\ n_i \in \bZ}} \frac{1}{n_1^{k_1}\cdots n_r^{k_r}} \in \bR, \\
 \zeta^\star (k_1,\dots, k_r)
 &\coloneqq \sum_{\substack{1\le n_1\le \cdots \le n_r \\ n_i \in \bZ}} \frac{1}{n_1^{k_1}\cdots n_r^{k_r}} \in \bR, 
\end{align*} 
and the $t$-MZVs by
\begin{align} \label{tMZV}
 \zeta^t(k_1,\dots,k_r)
 \coloneqq \sum_{\bp} \zeta(\bp) t^{r-\dep(\bp)} \in \bR[t].
\end{align}
The symbol $\sum_{\bp}$ stands for the sum where $\bp$ runs over all indices of the form
$\bp=(k_1\, \square\, \cdots\, \square\, k_r)$. Here, each $\square$ is filled by a comma `$,$' or a plus `$+$'. 
Note that $\zeta^0=\zeta$ and $\zeta^1=\zeta^\star$ hold.
Several algebraic relations among $t$-MZVs are already known 
(see Yamamoto \cite{Yam13}, Tanaka--Wakabayashi \cite{TW16}, Li--Qin \cite{LQ17}, and Li \cite{Li19}).

\subsection{Interpolated finite multiple zeta values}
The main topic of this paper is to consider the counterpart of $t$-MZVs in the field of finite multiple zeta(-star) values (FMZ(S)Vs), 
and introduce several relations among them.

\subsubsection{Finite multiple zeta(-star) values}
Kaneko and Zagier \cite{KZ19} defined two types of FMZ(S)Vs: 
$\mathcal{A}$-multiple zeta(-star) values ($\mathcal{A}$-MZ(S)Vs) and $\cS$-multiple zeta(-star) values ($\cS$-MZ(S)Vs).

Set $\cA\coloneqq \prod_p\bF_p/\bigoplus_p\bF_p$, where $p$ runs over all primes. 
It is known that $\bQ$ is embedded in $\cA$ so $\cA$ becomes a $\bQ$-algebra (see Kaneko \cite{Kan19} and Kaneko--Zagier \cite{KZ19}). For an index $(k_{1},\ldots,k_{r})$, the $\cA$-MZVs and the $\cA$-MZSVs are defined by
\begin{align*}
 \zeta_\cA(k_1,\dots,k_r)
 &\coloneqq \Biggl(\sum_{\substack{1\le m_1<\dots<m_r<p \\ m_i \in \bZ}}\frac{1}{m_1^{k_1}\dotsm m_r^{k_r}}\bmod p\Biggr)_p \in \cA,\\
 \zeta_\cA^{\star}(k_1,\dots,k_r)
 &\coloneqq \Biggl(\sum_{\substack{1\le m_1\le\dots\le m_r<p \\ m_i \in \bZ}}\frac{1}{m_1^{k_1}\dotsm m_r^{k_r}}\bmod p\Biggr)_p \in \cA.
\end{align*}

Let $\cZ$ be the $\bQ$-linear subspace of $\bR$ spanned by $1$ and all MZVs. 
For an index $(k_{1},\ldots,k_{r})$, we define the $\cS$-MZVs by
\begin{align*}
 \zeta_\cS(k_1,\dots,k_r)
 &\coloneqq \sum_{i=0}^{r}(-1)^{k_{i+1}+\dots+k_r}\zeta(k_1,\dots,k_i)
 \zeta(k_r,\dots,k_{i+1})\bmod\zeta(2) \in \cZ/\zeta(2)\cZ,
\end{align*}
where we understand $\zeta(\emptyset)=1$.
Here, the symbol $\zeta$ on the right-hand side means the regularized values coming from harmonic regularization, 
i.e., real values obtained by taking constant terms of harmonic regularization 
as explained in Ihara--Kaneko--Zagier \cite{IKZ06}. 
For an index $(k_{1},\ldots,k_{r})$, we also define the $\cS$-multiple zeta-star values ($\cS$-MZSVs) by 
\begin{align*}
 \zeta_{\cS}^{\star}(k_{1},\dots,k_{r})
 \coloneqq \sum_{\substack{\square\textrm{ is either a comma `,' } \\
 \textrm{ or a plus `+'}
 }
 }\zeta_{\cS}(k_{1}\square\cdots\square k_{r}) \in \cZ/\zeta(2)\cZ.
\end{align*}

Denoting by $\cZ_{\cA}$ the $\bQ$-linear subspace of $\cA$ spanned by $1$ and all $\cA$-MZVs, Kaneko and Zagier conjectured that there is an isomorphism between $\cZ_{\cA}$ and $\cZ/\zeta(2)\cZ$ as $\bQ$-algebras such that $\zeta_{\cA}(k_1,\ldots, k_r)$ and $\zeta_{\cS}(k_1,\ldots, k_r)$ correspond to each other 
(for details, see Kaneko \cite{Kan19} and Kaneko--Zagier \cite{KZ19}). 
In the following, the letter $\mathcal{F}$ stands for either $\mathcal{A}$ or $\mathcal{S}$.

\subsubsection{Interpolation of finite multiple zeta and zeta-star values}
For each index $\bk$, we define the polynomial that interpolates the FMZV $\zeta_{\cF}(\bk)$ and FMZSV $\zeta^{\star}_{\cF}(\bk)$ with variable $t$ ($t$-FMZVs) by
\begin{align*}
 \zeta_{\cF}^t(k_1,\dots,k_r)
 \coloneqq \sum_{\bp}\zeta_{\cF}(\bp) t^{r-\dep(\bp)}, 
\end{align*}
where the variable $\bp$ runs over the same place in (\ref{tMZV}). 
Then we easily see $\zeta^0_{\cF}=\zeta_{\cF}$ and $\zeta^1_{\cF}=\zeta^\star_{\cF}$ hold.

We note that $\zeta^t_{\cA}(\bk)$ was first defined by Seki \cite{Sek17} and he interpolated the sum formulas for $\cA$-MZVs and $\cA$-MZSVs, which were proved by Saito--Wakabayashi \cite{SW15}, partially. 
Remark that Seki proved the formula only for $\zeta^t_{\cA}(\bk)$, but his proof worked for $\zeta^t_{\cS}(\bk)$. 
For a positive integer $k$, let $\fZ_{\cA}(k)\coloneqq ( B_{p-k}/k \bmod{p} )_p$ and $\fZ_{\cS}(k)\coloneqq \zeta(k) \bmod{\zeta(2)}$, where $B_n$ is the $n$-th Seki--Bernoulli number.
\begin{thm}[Sum formula; Seki \cite{Sek17}] \label{sumF}
 For positive integers $k,r$ with $1\le r\le k$, we have
 \begin{align*}
  \sum_{\substack{ k_1+\cdots+k_{r}=k \\ k_1,\dots,k_{r-1}\ge1, k_{r}\ge2 }} 
  \zeta_{\cF}^t (k_1,\dots,k_{r}) 
  &=\sum_{j=0}^{r-1} \left\{\binom{k-1}{j} +(-1)^r\binom{k-1}{r-1-j} \right\}
   t^j (1-t)^{r-1-j} \fZ_{\cF}(k). 
 \end{align*}
\end{thm}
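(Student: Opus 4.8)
The plan is to expand each $\zeta_\cF^t(\bk)$ as a sum over the ``$+$''-patterns, sum over all admissible indices of weight $k$ and depth $r$, and reduce everything to the non-interpolated finite sum formula (the case $t=0$). First I would record the combinatorial meaning of the defining sum: for $\bk=(k_1,\dots,k_r)$ there are $r-1$ boxes, and choosing a subset $A\subseteq\{1,\dots,r-1\}$ of them to be ``$+$'' produces a contracted index $\bp_A$ with $\dep(\bp_A)=r-|A|$, so that $t^{r-\dep(\bp_A)}=t^{|A|}$ and
\[ \zeta_\cF^t(\bk)=\sum_{A\subseteq\{1,\dots,r-1\}}\zeta_\cF(\bp_A)\,t^{|A|}. \]
Summing over all admissible $\bk$ (those with $k_r\ge2$) of weight $k$ and depth $r$, interchanging the two summations, and grouping by $a\coloneqq|A|\in\{0,\dots,r-1\}$ is the first move.

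For fixed $a$, summing over all $A$ with $|A|=a$ amounts to summing over all compositions $(s_1,\dots,s_{r-a})$ of $r$ into $r-a$ positive parts (the block sizes cut out by $A$). For a fixed target $\bp=(p_1,\dots,p_{r-a})$, the admissible preimages $\bk$ with $\bp_A=\bp$ are obtained by composing each $p_i$ into $s_i$ positive parts, the last part of $p_{r-a}$ being forced to be $\ge2$ to guarantee $k_r\ge2$; this yields the multiplicity
\[ \Bigl(\prod_{i=1}^{r-a-1}\binom{p_i-1}{s_i-1}\Bigr)\binom{p_{r-a}-2}{s_{r-a}-1}. \]
Summing this over all compositions $(s_i)$ factors, and by the Chu--Vandermonde convolution (reading the total as the coefficient of $x^a$ in $(1+x)^{k-r+a-1}$) it collapses to $\binom{k-r+a-1}{a}$, which is \emph{independent of} $\bp$. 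Since any $\bp_A$ arising from an admissible $\bk$ is again admissible, no spurious terms appear, and I obtain
\[ \sum_{\substack{\wt(\bk)=k,\ \dep(\bk)=r\\ k_r\ge2}}\zeta_\cF^t(\bk)=\sum_{a=0}^{r-1}\binom{k-r+a-1}{a}\,t^a\sum_{\substack{\wt(\bp)=k,\ \dep(\bp)=r-a\\ \textrm{admissible}}}\zeta_\cF(\bp). \]

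At this point I would invoke the finite sum formula for $\zeta_\cF$ itself, i.e.\ the case $t=0$ of the present statement (the theorem of Saito--Wakabayashi): the inner depth-$(r-a)$ sum equals $\bigl(1+(-1)^{r-a}\binom{k-1}{r-a-1}\bigr)\fZ_\cF(k)$. Substituting gives the closed monomial form
\[ \sum_{a=0}^{r-1}\binom{k-r+a-1}{a}\Bigl(1+(-1)^{r-a}\binom{k-1}{r-a-1}\Bigr)t^a\,\fZ_\cF(k), \]
and it remains to verify that this polynomial equals the Bernstein-type expression on the right-hand side of the theorem. I expect this purely binomial identity to be the main obstacle.

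To settle it I would split both sides into the ``$1$''-part and the signed part. For the first part, $\binom{k-r+a-1}{a}$ is exactly the coefficient of $t^a$ in the power series $(1-t)^{-(k-r)}$, and a short computation shows $\sum_{j=0}^{r-1}\binom{k-1}{j}t^j(1-t)^{r-1-j}$ is precisely the degree-$(r-1)$ truncation of $(1-t)^{-(k-r)}$, because the discarded tail $\sum_{j\ge r}$ only affects coefficients of $t^a$ with $a\ge r$. For the signed part, after the symmetry $j\mapsto r-1-j$ one reduces, via the subset-of-a-subset identity $\binom{n}{b}\binom{b}{c}=\binom{n}{c}\binom{n-c}{b-c}$ and the alternating partial-sum identity $\sum_{i=0}^{a}(-1)^i\binom{n}{i}=(-1)^a\binom{n-1}{a}$, to the matching coefficient $(-1)^a\binom{k-r+a-1}{a}\binom{k-1}{r-1-a}$. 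Matching both parts coefficientwise completes the proof, uniformly in $\cF\in\{\cA,\cS\}$, since the whole argument only used the defining contraction sum and the $t=0$ sum formula, both of which hold for $\cA$ and $\cS$ alike.
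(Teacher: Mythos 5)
The paper itself contains no proof of this theorem: it is imported verbatim from Seki's thesis \cite{Sek17} (with the remark that Seki's argument, written for $\zeta^t_{\cA}$, also works for $\zeta^t_{\cS}$), so there is no internal argument to compare yours against; I can only check your proof on its own terms, and it is correct. Your reduction of the interpolated formula to its $t=0$ specialization is sound at every step. The counting step is right: for a fixed admissible $\bp=(p_1,\ldots,p_{r-a})$ of depth $r-a$, the pairs $(\bk,A)$ with $\bk$ admissible of depth $r$, $|A|=a$ and $\bp_A=\bp$ correspond to choosing a composition $(s_1,\ldots,s_{r-a})$ of $r$ and refining each $p_i$ into $s_i$ positive parts with the last part of $p_{r-a}$ at least $2$; Vandermonde then gives $\binom{k-r+a-1}{a}$ independently of $\bp$, and admissibility of $\bk$ is equivalent to admissibility of $\bp_A$ (the last block contains $k_r$), so the double count is exact. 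The concluding binomial identity also holds: for the ``$1$''-part, $\sum_{j\ge 0}\binom{k-1}{j}t^j(1-t)^{r-1-j}=(1-t)^{r-k}$ in $\bQ[[t]]$ and the discarded tail lies in $t^r\bQ[[t]]$, so the degree-$(r-1)$ polynomial on the right is the truncation of $(1-t)^{-(k-r)}$, as you say; for the signed part, the coefficient of $t^a$ in $\sum_{j=0}^{r-1}\binom{k-1}{r-1-j}t^j(1-t)^{r-1-j}$ equals $\binom{k-1}{r-1-a}\sum_{i=0}^{a}(-1)^i\binom{k-r+a}{i}=(-1)^a\binom{k-1}{r-1-a}\binom{k-r+a-1}{a}$ by exactly the two identities you invoke, which matches the left-hand coefficient $(-1)^{r-a}\binom{k-r+a-1}{a}\binom{k-1}{r-a-1}$ after multiplying by $(-1)^r$. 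The one point to repair is attribution: for $\cF=\cS$ the $t=0$ input is not in Saito--Wakabayashi \cite{SW15}, which concerns $\cA$-MZ(S)Vs only; you must cite the sum formula for $\cS$-MZVs as a separate known result (the paper's own stance is that Seki's method yields it), since otherwise the $\cS$ case of your argument would assume a special case of what it proves. With that reference fixed, your proof is complete, and it is arguably more transparent than a from-scratch interpolation: it isolates all of the $t$-dependence into one exact count and one binomial identity, so the interpolated theorem is seen to carry no information beyond its $t=0$ case.
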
 

In this paper, we give several algebraic relations among $t$-FMZVs and, in particular, we prove the following three theorems:
\begin{thm}[Cyclic sum formula] \label{cycsumF}
 For a non-empty index $(k_1,\dots,k_r)$ with $(k_1,\dots,k_r)\ne (\underbrace{1,\dots,1}_{r})$, we have
 \begin{align*}
   &\sum_{l=1}^{r} \sum_{j=1}^{k_l-1}
   \zeta_{\cF}^{t} (j,k_{l+1},\dots,k_r,k_1,\dots, k_{l-1}, k_{l}+1-j) \\
   &=(1-t) \sum_{l=1}^{r} 
    \bigl(\zeta_{\cF}^{t} (k_{l+1},\dots,k_r,k_1,\dots,k_{l-1},k_{l}+1)
     +\zeta_{\cF}^{t} (k_{l+1}+1,k_{l+2},\dots,k_r,k_1,\dots,k_{l}) \bigr) \\
   &\quad +\sum_{l=1}^r\zeta_{\cF}^{t} (1,k_{l+1},\dots,k_r,k_1,\dots,k_{l}). 
 \end{align*}
\end{thm}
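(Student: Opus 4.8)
The plan is to prove the stated equality directly as an identity of polynomials in $t$. It is not enough to verify the endpoint cases $t=0$ (the cyclic sum formula for $\zeta_{\cF}$) and $t=1$ (that for $\zeta_{\cF}^\star$): both sides are polynomials in $t$ of degree up to $r$, so two sample points do not determine them once $r\ge2$. The device I would use is a weighted iterated-sum model of $\zeta_{\cF}^t$. For $\cF=\cA$, grouping the defining sum $\sum_{\bp}\zeta_{\cA}(\bp)\,t^{r-\dep(\bp)}$ according to which consecutive parts have been merged gives
\[
  \zeta_{\cA}^t(k_1,\dots,k_r)
  =\Biggl(\sum_{0<m_1\le\cdots\le m_r<p}
     \frac{t^{c(\bm)}}{m_1^{k_1}\cdots m_r^{k_r}}\bmod p\Biggr)_p ,
\]
where $\bm=(m_1,\dots,m_r)$ and $c(\bm)=\#\{\,1\le i\le r-1 : m_i=m_{i+1}\,\}$; a block of $\ell$ equal indices carries the weight $t^{\ell-1}$ and collapses the corresponding entries of $\bk$, reproducing the definition. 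For $\cF=\cS$ I would use the analogous expression with the finite sum replaced by the harmonically regularized symmetric sum defining $\zeta_{\cS}$. This model turns all of the star/interpolation bookkeeping into a single weighted sum on which index manipulations can be carried out.

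The engine of the proof is a fundamental finite identity obtained from this weighted sum for one cyclic rotation. Fixing a rotation beginning just after position $l$ and telescoping (equivalently, applying a finite partial-fraction decomposition) in the summation variable attached to the entry $k_l$, I would produce on one side the inner split $\sum_{j=1}^{k_l-1}\zeta_{\cF}^t(j,k_{l+1},\dots,k_{l-1},k_l+1-j)$ and on the other side boundary contributions coming from the two ends of the summation range and from the coincidences of adjacent summation variables. The crucial point is that a coincidence $m_i=m_{i+1}$ carries the factor $t$, so the difference between the strict ($<$) and non-strict ($\le$) forms of a boundary term is weighted by $1-t$; this is the source of the factors $(1-t)$ multiplying $\zeta_{\cF}^t(k_{l+1},\dots,k_{l-1},k_l+1)$ and $\zeta_{\cF}^t(k_{l+1}+1,k_{l+2},\dots,k_l)$, while the remaining end contribution (carrying an extra factor $1/m$, hence a leading entry $1$) assembles into $\zeta_{\cF}^t(1,k_{l+1},\dots,k_l)$.

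Summing this identity over the cyclic rotations $l=1,\dots,r$ then yields the theorem: the inner $j$-sums add up to the left-hand double sum, and the boundary terms collect into the three groups on the right. The hypothesis $(k_1,\dots,k_r)\ne(1,\dots,1)$ enters precisely here, since it is the case in which the telescoping has no admissible anchor: every inner sum on the left is then empty while the right-hand side does not vanish, so the exclusion is genuinely necessary.

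The step I expect to be the main obstacle is the exact bookkeeping of the boundary terms together with the uniform treatment of the two cases $\cF=\cA,\cS$. On the $\cA$ side one must check that the strict and non-strict end contributions recombine with the correct multiplicities, so that the coefficients are exactly $1-t$ on the first two groups and $1$ on the third with nothing left over; I would manage this by tracking, for each rotation, the lower-end, upper-end, and adjacent-coincidence events separately and summing cyclically only at the very end. On the $\cS$ side there is no literal finite sum, so the same manipulations must be performed with harmonically regularized values, and one must verify that the regularization (the $\zeta(2)$-ambiguity) does not obstruct the telescoping; this is where a formalism valid simultaneously for $\cA$ and $\cS$ is essential.
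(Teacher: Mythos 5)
There is a genuine gap, and it is not merely one of detail. Your weighted-sum model is correct for $\cF=\cA$ (grouping $\sum_{0<m_1\le\cdots\le m_r<p}t^{c(\bm)}/\prod m_i^{k_i}$ by the pattern of equalities does recover $\sum_{\bp}\zeta_{\cA}(\bp)t^{r-\dep(\bp)}$), but the entire mathematical content of the theorem is then located in the step you defer: the telescoping/partial-fraction identity whose boundary terms are claimed to recombine with coefficients exactly $1-t$ and $1$. With the weights $t^{c(\bm)}$ present, every partial-fraction split creates coincidence events whose weights must be tracked through the manipulation, and nothing in the proposal verifies that this bookkeeping closes up; asserting that it does is asserting the theorem. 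Worse, for $\cF=\cS$ the proposal has no starting point at all: $\zeta_{\cS}$ is \emph{defined} as a signed sum of products of harmonically regularized MZVs, not as an iterated sum, so ``the analogous expression with the finite sum replaced by the harmonically regularized symmetric sum'' is not a definition. Turning it into one (via truncated symmetric sums and a regularization limit) and then pushing a telescoping argument through the regularization would amount to re-proving from scratch the cyclic sum formula for $\cS$-MZVs (the unpublished Hirose--Sato result), now with $t$-weights on top --- a substantial project that the proposal does not begin.

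This is also where your route diverges most sharply from the paper, which never manipulates a single sum. The paper defines $F^t(\bk)$ as LHS minus RHS of the identity, proves by purely combinatorial manipulation of $t$-indices (Lemma 2.1 and Proposition 2.2) that the coefficient of $t^m$ in $F^t(\bk)$ equals $F^0(C_m(\bk))$, i.e.\ an integral linear combination of instances of the $t=0$ statement evaluated at the cyclically merged indices in $C_m(\bk)$, and then simply quotes the known cyclic sum formulas for $\cA$- and $\cS$-MZVs. The known $t=0$ case is thus the \emph{input}, and the argument is uniform in $\cF$, which is precisely what dissolves the regularization problem you flag as essential. Finally, your side claim that the exclusion of $(1,\dots,1)$ is ``genuinely necessary'' because the right-hand side ``does not vanish'' is unsupported and partly false: for the all-ones index of even depth $r$ the right-hand side reduces, using $\zeta^t_{\cF}(\{1\}^{r+1})=0$ (symmetric sum formula) and the reversal formula $\zeta^t_{\cF}(k_1,\dots,k_r)=(-1)^{k_1+\cdots+k_r}\zeta^t_{\cF}(k_r,\dots,k_1)$, to $r(1-t)\bigl(1+(-1)^{r+1}\bigr)\zeta^t_{\cF}(2,\{1\}^{r-1})=0$, so the identity actually holds there; for odd $r$ its nonvanishing would require nonvanishing statements of the type $\fZ_{\cA}(k)\ne 0$, which are open.
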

\begin{thm}[Bowman--Bradley type formula] \label{BBtype}
 Let $a$ and $b$ be odd positive integers, and $c$ an even positive integer. 
 Then, for any non-negative integers $l$ and $m$ with $(l,m)\ne (0,0)$, we have
 \begin{align*} 
  \sum_{\substack{\sum_{i=0}^{2l}m_i=m \\ m_0, \ldots, m_{2l}\ge0}}
  \zeta^t_{\cF}\bigl(\{c\}^{m_0}, a, \{c\}^{m_1}, b, \{c\}^{m_2}, \ldots, a, \{c\}^{m_{2l-1}}, b, \{c\}^{m_{2l}}\bigr)
  =0.
 \end{align*}
Here, for a positive integer $k$ and a non-negative integer $m$, we write $\{k\}^m\coloneqq \underbrace{k,\ldots ,k}_{m}$.
\end{thm}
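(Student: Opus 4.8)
The plan is to prove the statement in the harmonic algebra, combining the duality (reversal) relation with the vanishing of single $t$-FMZVs. Write $z_{k}$ for the word attached to the index $(k)$, regard $\zeta_{\cF}^{t}$ as a $\bQ$-linear map on the index algebra $\fH^{1}$, and recall that it is a homomorphism for the interpolated harmonic product $\hart$. Two inputs drive the argument. First, every single value vanishes, $\zeta_{\cF}^{t}(k)=\zeta_{\cF}(k)=0$: for $\cA$ this is $\sum_{m=1}^{p-1}m^{-k}\equiv0\ (p\gg0)$, and for $\cS$ it is $(1+(-1)^{k})\zeta(k)\equiv0\bmod\zeta(2)\cZ$. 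By the homomorphism property $\zeta_{\cF}^{t}$ then annihilates the whole ideal $\cI\coloneqq\sum_{k\ge1}z_{k}\hart\fH^{1}$; in particular it annihilates every full symmetric sum over the orderings of a multiset of letters, and—by a short induction, since the $n$-fold $\hart$-power of $z_{c}$ is $n!\{c\}^{n}$ plus terms carrying merged blocks—it annihilates every pure block $\{c\}^{n}$ with $n\ge1$. Second, $\zeta_{\cF}^{t}$ obeys the reversal relation $\zeta_{\cF}^{t}(k_{1},\dots,k_{r})=(-1)^{\wt}\zeta_{\cF}^{t}(k_{r},\dots,k_{1})$, inherited from the term-by-term reversal of the interpolation sum.

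The first step is to use that every index in the family has even weight $l(a+b)+mc$, so reversal is value-preserving on it. Reversing such an index turns the ``$a$-leads'' pattern $a,b,\dots,a,b$ into the ``$b$-leads'' pattern $b,a,\dots,b,a$ and replaces the block composition $(m_{0},\dots,m_{2l})$ by $(m_{2l},\dots,m_{0})$; summing over all compositions, the left-hand side $S$ of the asserted identity equals its $b$-leads mirror $S_{ba}$. Hence $S=\tfrac12(S_{ab}+S_{ba})$ and it suffices to show that the symmetrized sum vanishes.

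For $l=0$ the sum is a single pure block $\{c\}^{m}$ ($m\ge1$), already killed above; for $l=1$ the special letters are one $a$ and one $b$, whose only orderings are the two alternating ones, so $S_{ab}+S_{ba}$ is exactly the full symmetric sum over the orderings of $\{a,b,\{c\}^{m}\}$ and lies in $\cI$. For the general case I would encode the family in the generating series $B(u)=C\,(1-u\,z_{a}Cz_{b}C)^{-1}$ with $C=(1-z_{c})^{-1}$, check that reversal carries $B(u)$ to the $b$-leads series $C\,(1-u\,z_{b}Cz_{a}C)^{-1}$, and prove by induction on $l$ that every coefficient of $B(u)+\overleftarrow{B(u)}$ lies in $\cI$ modulo the reversal relation. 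The inductive step peels off one letter through a product $z_{a}\hart(\,\cdot\,)$ applied to the $(l-1)$-term, which lands in $\cI$ because of the single-letter factor, and then removes the extraneous non-alternating insertions and the merged blocks using reversal together with the vanishing of the pure blocks $\{c\}^{n}$.

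The main obstacle is precisely this last reduction for $l\ge2$. In contrast to $l=1$, the symmetrized alternating sum is no longer a full symmetric sum: the product $z_{a}\hart(\cdots)$ generates many non-alternating patterns (e.g.\ $\dots baab\dots$) together with merged blocks, and proving that, after applying the reversal relation and collapsing the $c$-blocks, all of these cancel so that only the alternating patterns survive is the genuine combinatorial difficulty. I expect to need both the harmonic and the shuffle relations—a double-shuffle input—organized by the series $B(u)$, rather than reversal alone: indeed, when $a=b$ the $a$-leads and $b$-leads patterns coincide, the reversal symmetry degenerates, and the vanishing must then come entirely from the product relations.
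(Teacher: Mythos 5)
Your reduction via the reversal formula is sound as far as it goes: single $t$-FMZVs vanish, $\zeta^t_{\cF}$ kills the $\hart$-ideal they generate, the indices in the family all have even weight $l(a+b)+mc$, so reversal identifies the $a$-leading sum $S_{ab}$ with the $b$-leading sum $S_{ba}$, and for $l=0$ (a pure block $\{c\}^m$) and $l=1$ (where $S_{ab}+S_{ba}$ is exactly the full symmetric sum over the multiset $\{a,b,\{c\}^m\}$) this closes the argument. But for $l\ge 2$ the symmetrized sum is \emph{not} a full symmetric sum, and this is precisely where your proof stops being a proof: the generating-series induction you outline (peel off a letter via $z_a\hart(\cdot)$, then cancel the non-alternating insertions and the merged blocks) is only a program, and you yourself flag the cancellation step as ``the genuine combinatorial difficulty.'' That difficulty \emph{is} the theorem. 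Nothing in the sketch shows that the non-alternating patterns created by $z_a\hart B_{l-1}$ are eliminated by reversal and pure-block vanishing, and there is no evidence that harmonic, shuffle, and reversal relations alone suffice: already the specialization $t=0$ of the statement is Saito--Wakabayashi's Bowman--Bradley theorem for FMZVs, a substantive result whose known proof is not a formal consequence of those relations, and which your approach would have to reprove from scratch.

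The paper takes the opposite route and this is worth internalizing: it never attempts to derive the $t=0$ case, but quotes Saito--Wakabayashi as the base of an induction on the exponent of $t$. Writing $B^t_{\ba}=\sum_{n}B^{(n)}_{l,m}[\ba]\,t^n$, its Key Proposition (Proposition \ref{Key prop for BB}) expresses $(n+1)B^{(n+1)}_{l,m}[\ba]$ as a sum of terms $B^{(n)}_{l,m-1}[\cdot]$ and $B^{(n)}_{l-1,m+1}[\cdot]$ attached to tuples in which two entries are merged ($a_i+c_j$ and $b_i+c_j$ odd, $a_i+b_j$ and $c_i+c_j$ even), so the merged tuples stay inside the same family and the induction hypothesis applies; the proposition itself is proved by counting, for a fixed way of grouping the letters, the multiplicity of each resulting index on both sides. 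If you want to complete your argument, the most direct repair is to adopt this structure: establish a recursion between the coefficient of $t^{n+1}$ and the coefficients of $t^{n}$ for merged data, and anchor the induction at the known $t=0$ theorem, rather than trying to generate the whole alternating family inside the harmonic algebra.
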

\begin{rem}
 We prove a more general theorem in Section 3 (see Theorem \ref{BBtype_main}).
\end{rem}
\begin{thm}[Weighted sum formula] \label{wt_sum_formula}
 For a positive integer $k$ and an odd positive integer $r$ with $1\le r\le k$, we have
 \begin{align*}
  \sum_{\substack{k_{1}+\cdots+k_{r}=k \\ k_1,\dots,k_r\ge1}}
  2^{k_r-1} \zeta_{\cF}^t (k_{1},\ldots,k_{r}) 
  =0.
 \end{align*}
\end{thm}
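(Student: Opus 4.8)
My plan is to combine the reversal (duality) relation for $t$-FMZVs with the double shuffle relations and the sum formula (Theorem~\ref{sumF}), organising the bookkeeping through a generating function in the weight. First I would record the reversal relation. From the defining congruences modulo $p$ (resp. from the definition of $\zeta_{\cS}$) one gets $\zeta_{\cF}(k_1,\ldots,k_r)=(-1)^{k}\zeta_{\cF}(k_r,\ldots,k_1)$ with $k=\wt(\bk)$, for both $\cF=\cA$ and $\cF=\cS$. Since reversing an index $\bp=(k_1\,\square\,\cdots\,\square\,k_r)$ carries the coarsenings of $(k_1,\ldots,k_r)$ bijectively onto those of $(k_r,\ldots,k_1)$ while preserving depth, this lifts at once to
\[
 \zeta_{\cF}^{t}(k_1,\ldots,k_r)=(-1)^{k}\,\zeta_{\cF}^{t}(k_r,\ldots,k_1).
\]
Applying this to the left-hand side and reindexing $(k_1,\ldots,k_r)\mapsto(k_r,\ldots,k_1)$ shows that the weight $2^{k_r-1}$ on the last part may be transported to a weight $2^{k_1-1}$ on the first part at the cost of the global sign $(-1)^{k}$; this symmetry is the mechanism that ultimately forces the odd-depth sum to vanish.

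Next I would reduce to honest FMZVs. Expanding $\zeta_{\cF}^{t}(\bk)=\sum_{\bp}\zeta_{\cF}(\bp)t^{r-\dep(\bp)}$ and collecting the coefficient of each power of $t$, the claim becomes a family, indexed by the target depth $d$, of weighted identities among the values $\zeta_{\cF}(\bp)$ of weight $k$ and depth $d$; their coefficients are explicit sums of products of binomial coefficients together with an inner weight $\sum_{s}2^{s-1}\binom{p_d-s-1}{e_d-2}$ coming from the splitting of the last block. Each such identity I would prove from the double shuffle relations. The crucial input is that depth-one values vanish, $\zeta_{\cF}(k)=0$ for all $k\ge1$ (power-sum vanishing for $\cA$, and parity together with $\zeta(2n)\in\zeta(2)\cZ$ for $\cS$); hence every harmonic and every shuffle product $\zeta_{\cF}(a)\cdot\zeta_{\cF}(\bk')=0$ supplies a linear relation among the depth-$d$ and depth-$(d{-}1)$ values involved, and the sum formula (Theorem~\ref{sumF}) evaluates the remaining fully summed pieces in terms of $\fZ_{\cF}(k)$.

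The role of the parity of $r$ is then transparent. After symmetrising with the reversal relation, the surviving $\fZ_{\cF}(k)$-contributions are governed by the factor $\binom{k-1}{j}+(-1)^{r}\binom{k-1}{r-1-j}$ appearing in Theorem~\ref{sumF}. For odd $r$ this factor is antisymmetric under $j\mapsto r-1-j$, and combined with the reversal sign $(-1)^{k}$ the whole expression cancels against its mirror image, leaving $0$. For even $r$ the same mechanism produces instead a nonzero multiple of $\fZ_{\cF}(k)$, which is already visible in the case $r=2$ (where the sum reduces to a single depth-two value such as $\zeta_{\cF}(1,2)\neq0$); this is precisely why the hypothesis $r$ odd is needed.

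The main obstacle I anticipate is the combinatorial bookkeeping: one must verify that the coefficients $2^{k_r-1}$, once passed through the coarsening sum defining $\zeta_{\cF}^{t}$ and through the double shuffle relations, reorganise exactly into the antisymmetric binomial pattern of the sum formula. I expect this to be cleanest at the level of generating functions, where the identity $2^{k_r-1}z_{k_r}=(2x)^{k_r-1}y$ interprets the weight as the substitution $x\mapsto 2x$ in the final block. The task is then to establish the resulting functional identity uniformly in $t$ and simultaneously for $\cF=\cA$ and $\cF=\cS$; the only facts used are the reversal relation, the harmonic and shuffle relations, and Theorem~\ref{sumF}, all of which are available for both theories.
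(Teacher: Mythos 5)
Your preparatory steps are sound as far as they go: the reversal formula $\zeta^t_{\cF}(k_1,\ldots,k_r)=(-1)^k\zeta^t_{\cF}(k_r,\ldots,k_1)$ does lift from FMZVs to $t$-FMZVs exactly as you argue, and expanding in powers of $t$ does reduce the theorem to a family of identities among FMZVs of fixed weight and depth. But from that point on your argument is a plan rather than a proof, and the gap sits precisely where you write ``I expect'' and ``one must verify'': you never exhibit the linear combination of double shuffle relations (products of depth-one values with lower-depth values) and instances of Theorem \ref{sumF} that yields these identities, nor any argument that such a combination exists. This is not routine bookkeeping. Even the $t=0$ case of the theorem is a nontrivial published result (Hirose--Murahara--Saito \cite{HMS19}, Murahara \cite{Mur18}), proved there by combinatorial methods rather than by the route you describe, and it is not at all evident that the weight $2^{k_r-1}$, which couples the size of the last entry to the coefficient, can be reorganised into the relations you cite. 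Your parity mechanism (antisymmetry of $\binom{k-1}{j}+(-1)^r\binom{k-1}{r-1-j}$ under $j\mapsto r-1-j$ for odd $r$) is likewise only a heuristic: Theorem \ref{sumF} concerns the unweighted sum over indices with $k_r\ge2$, and you give no derivation connecting that sum to the weighted sum over all compositions. (Incidentally, your justification of the necessity of the hypothesis, via $\zeta_{\cF}(1,2)\ne0$ at $r=2$, assumes a nonvanishing statement that is conjectural for both $\cF=\cA$ and $\cF=\cS$.)

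For comparison, the paper's proof rests on three inputs you do not use at all: the duality relation $Z_{\cF}(w)=Z_{\cF}(\phi(w))$, the non-interpolated sum formula $\zeta_{\cF}(S(k,r))=0$, and Oyama's Ohno-type relation $\zeta_{\cF}(G(\bk,m))=0$; and its heart is the Key Proposition \ref{key_lem}, an exact combinatorial identity showing that the corrected element $H(k,r,n)=F(k,r,n)+S'(k,r,n)+G'(k,r,n)$ satisfies $H+\phi(H)=-\binom{k-r+n}{n}(\{1\}^k)$ or $0$ according to the parity of $k$, with the hypothesis that $r$ is odd entering through the cancellation $1+(-1)^r=0$ in Lemma \ref{G_2}. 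Something playing the role of this proposition --- a proved identity, not an anticipated one --- is exactly what your proposal lacks; without it, the claim that reversal, double shuffle, and Theorem \ref{sumF} suffice remains unsubstantiated, and the proof is incomplete.
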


The contents of this paper is organized as follows: 
in Section 2, we prove Theorem \ref{cycsumF} by reducing it to the cyclic sum formula for FMZVs.
In Section 3, we prove the Bowman--Bradley type formulas (Theorems \ref{BBtype} and \ref{BBtype_main}) by comparing the coefficients of `Bowman--Bradley sum' in the two successive degrees in $t$-FMZVs. 
In Section 4, we partially interpolate the weighted sum formulas obtained by Hirose--Murahara--Saito \cite{HMS19} for $\cA$-MZ(S)Vs and Murahara \cite{Mur18} for both FMZ(S)Vs. 
In Section 5, we introduce several formulas that are relatively easy to obtain, such as the harmonic relation, the shuffle relation, the duality relation, and the derivation relation.

\section{Cyclic sum formula}
The cyclic sum formulas for MZVs and MZSVs were proved by Hoffman--Ohno \cite[(1)]{HO03} and Ohno--Wakabayashi \cite[Theorem 1]{OW06}, respectively.  
Yamamoto \cite[Theorem 5.4]{Yam13} interpolated them to $t$-MZVs: 
\begin{align}\label{eq: CSF for tMZV}
 &\sum_{l=1}^r\sum_{j=1}^{k_l-1}\zeta^t(j, k_{l+1}, \ldots, k_r, k_1, \ldots, k_{l-1}, k_l+1-j)\\
 &=(1-t)\sum_{l=1}^r\zeta^t(k_{l+1}+1, \ldots, k_r, k_1, \ldots, k_{l-1}, k_l)+k\zeta(k+1)t^r, \nonumber
\end{align}
where $r, k_1, \ldots, k_r \ge1$, $k_1, \ldots, k_r$ are not all 1 and $k\coloneqq k_1+\cdots+k_r$.
Yamamoto proved \eqref{eq: CSF for tMZV} by reducing it to the cyclic sum formula for MZVs by differentiating with variable $t$. Our proof of Theorem 1.2 differs from his proof by introducing a cyclic star index $C_m$.

On the other hand, the counterparts for $\cA$-MZ(S)Vs and $\cS$-MZ(S)Vs were obtained by Kawasaki--Oyama \cite[Theorem 1.2]{KO19} and Hirose--Sato (unpublished), respectively (see also Hirose--Murahara--Ono \cite[Theorem 2.4]{HMO20}).
Here, we prove a generalization for $t$-FMZVs (Theorem \ref{cycsumF}) by using their results.

For the proof of Theorem \ref{cycsumF}, we prepare some notation.
Let $\cI=\bigoplus_{r=0}^\infty\bQ[\bZ^r_{\ge1}]$ be the $\bQ$-vector space freely spanned by all indices.
We endow the $\bQ$-vector space $\cI$ with a (non-commutative) ring structure by the concatenation product. We extend $\zeta^t_{\cF}$ to the $\bQ[t]$-linear map from $\cI[t]$.
For an index $(k_1, \ldots, k_r)$, we put
 \begin{align*}
  (k_1,\dots,k_r)^t
  \coloneqq \sum_{\substack{\square\textrm{ is either a comma `,' } \\ \textrm{ or a plus `+'}} }
   (k_1\square k_2\square\cdots\square k_r) \times t^{\text{the number of `+'}} \in \cI[t]
 \end{align*}
and call $(k_1,\dots,k_r)^t$ a $t$-index of $(k_1, \ldots, k_r)$. We extend $\bQ$-linearly the definition of the $t$-index to elements in $\cI$ and for an element $I$ in $\cI$, we denote its $t$-index by $I^t$. 

Moreover, for a non-empty index $\bk=(k_1, \ldots, k_r)$ and a non-negative integer $m$ with $m \le r-1$, let $\bk^t_m$ be the coefficient of $\bk^t$ at $t^m$. That is, 
\begin{align*}
\bk^t_m
\coloneqq 
\sum_{\substack{\square\textrm{ is either a comma `,' } \\ \textrm{ or a plus `+'} \\ \textrm{the number of `+'=$m$} }}
(k_1\,\square\,\cdots\,\square\,k_r) \in \cI.
\end{align*}
We set $\bk^t_{-1}\coloneqq 0$. 

For a non-empty index $\bk=(k_{1},\dots,k_{r})$ and $m \in \bZ_{\ge0}$ with $m \le r-1$, we define the cyclic index $C_m(\bk)$ by
 \begin{align*}
  C_m(\bk)
  \coloneqq \sum_{\substack{\square\textrm{ is either a comma `,' } \\ \textrm{ or a plus `+'} \\ \textrm{the number of `+'=$m$} }}
  (k_1\,\square\,\cdots\,\square\,k_r\,\square) \in \cI. 
 \end{align*}
Here, we understand $(k_1\,\square\,\cdots\,\square\,k_r\,+)=(k_r+k_1\,\square\,\cdots\,\square\,k_{r-1} \square)$ and $(k_1\,\square\,\cdots\,\square\,k_r\,, )=(k_1\,\square\,\cdots\,\square\,k_r)$.
We note that all indices appearing in $C_m(\bk)$ have the same depth$(=r-m)$.

Set $\supp C_m(\bk) \coloneqq \{\text{monomials appearing in $C_m(\bk)$}\}$. Finally, for any $I$-valued $(m-r)$-variable function $f$, we denote by $\sum_{(a_1, \ldots, a_{r-m}) \in \supp C_m(\bk)} f(a_1, \ldots, a_{r-m})$ the sum of each monomial appearing in $C_m(\boldsymbol{k})$. For example, since $C_1(k_1,k_2,k_3)=(k_1+k_2,k_3)+(k_1,k_2+k_3)+(k_3+k_1,k_2)$, we have $\sum_{(a_1, a_2) \in \supp C_1(k_1,k_2,k_3)} f(a_1, a_2)=f(k_1+k_2,k_3)+f(k_1,k_2+k_3)+f(k_3+k_1,k_2)$.
Note that $\sum_{(a) \in \supp C_2(k_1,k_2,k_3)} f(a)=3f(k_1+k_2+k_3)$.
\begin{lem} \label{aaaaa}
For a non-negative integer $m$ with $m\le r-1$ and any non-empty indices $\bk=(k_1,\dots,k_r)$ with $(k_1,\dots,k_r)\ne(1,\dots,1)$, we have
\begin{align*}
&\sum_{l=1}^r \sum_{j=1}^{k_l-1}(j, k_{l+1}, \ldots, k_r, k_1, \ldots, k_{l-1}, k_l+1-j)^t_m \\
&=\sum_{(a_1, \ldots, a_{r-m})\in\supp C_m(\bk)}\sum_{i=1}^{r-m}\sum_{j=1}^{a_i-1}
(j, a_{i+1}, \ldots, a_{r-m}, a_1, \ldots, a_{i-1}, a_i+1-j) \nonumber \\
&\quad +\sum_{(a_1, \ldots, a_{r-m}) \in \supp C_m(\bk)} \sum_{i=1}^{r-m}
(a_i, \ldots, a_{r-m}, a_1, \ldots, a_{i-1}, 1)-\sum_{l=1}^r(k_l, \ldots, k_r, k_1, \ldots, k_{l-1}, 1)^t_m. \nonumber 
\end{align*}
\end{lem}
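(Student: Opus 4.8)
The plan is to establish this identity in $\cI$ by reorganizing both sides according to a single cyclic merging pattern of $\bk$. Arrange $k_1,\dots,k_r$ on a circle and label its $r$ gaps $g_1,\dots,g_r$, where $g_i$ lies between $k_i$ and $k_{i+1}$ with indices read cyclically. A subset $S\subseteq\{g_1,\dots,g_r\}$ with $|S|=m$ (the gaps carrying a plus) merges $\bk$ into $r-m$ cyclic blocks, and these block sequences are exactly the monomials $(a_1,\dots,a_{r-m})$ occurring in $C_m(\bk)$, counted with the multiplicity built into the $\supp$-sums. The starting remark is that the $r$ gaps of the depth-$(r+1)$ index $I_{l,j}\coloneqq(j,k_{l+1},\dots,k_r,k_1,\dots,k_{l-1},k_l+1-j)$ correspond bijectively to $g_1,\dots,g_r$: the front gap (after $j$) to $g_l$, the back gap (before $k_l+1-j$) to $g_{l-1}$, and the interior gaps to the remaining $g_i$. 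Consequently $(I_{l,j})^t_m=\sum_{|S|=m}(I_{l,j}\text{ with the gaps of }S\text{ turned into pluses})$, and I would compute the left-hand side of the Lemma by summing over $S$ first.

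Fixing $S$, let $a_{i_0}=k_a+\dots+k_b$ be the block of this cyclic merge that contains the split entry $k_l$. A direct inspection shows that merging $I_{l,j}$ along $S$ reproduces exactly the monomial obtained by splitting the block $a_{i_0}$ at the value $j'=j+k_{l+1}+\dots+k_b$ in the first right-hand sum applied to $(a_1,\dots,a_{r-m})$: the front part $j$ absorbs $k_{l+1},\dots,k_b$ to its right, the back part $k_l+1-j$ absorbs $k_a,\dots,k_{l-1}$ to its left, and one checks $a_{i_0}+1-j'=(k_a+\dots+k_{l-1})+(k_l+1-j)$. As $l$ runs over the entries of the block and $j$ over $\{1,\dots,k_l-1\}$, the values $j'$ sweep out $\{1,\dots,a_{i_0}-1\}$, each attained once, with the sole exception of the internal boundary values $j'=k_{s+1}+\dots+k_b$ $(a\le s\le b-1)$, which are skipped. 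Summing over all blocks and all $S$ therefore gives
\[
(\text{left-hand side of the Lemma})=\sum_{(a_1,\dots,a_{r-m})\in\supp C_m(\bk)}\sum_{i=1}^{r-m}\sum_{j=1}^{a_i-1}(j,a_{i+1},\dots,a_{i-1},a_i+1-j)-B,
\]
where $B$ collects the skipped boundary monomials $(k_{s+1}+\dots+k_b,\,a_{i_0+1},\dots,a_{i_0-1},\,k_a+\dots+k_s+1)$.

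It then remains to identify $B$ with the third term of the right-hand side (with its sign removed) minus the second term, i.e. $B=\sum_{l=1}^r(k_l,\dots,k_{l-1},1)^t_m-\sum_{(a)}\sum_i(a_i,\dots,a_{i-1},1)$. For this I would expand $(k_l,\dots,k_{l-1},1)^t_m$ according to whether the trailing $1$ remains separate or is merged to the left. The separate part, summed over $l$, matches the second right-hand term precisely, since a cyclic block starting at $k_l$ (so $g_{l-1}\notin S$) followed by an isolated $1$ is exactly a term $(a_i,\dots,a_{i-1},1)$, and every block is the start-block for exactly one such $l$. The merged-left part, in which $1$ is absorbed into the final block, matches $B$ under the correspondence $s\leftrightarrow l-1$ and $S\leftrightarrow S\setminus\{g_s\}$: deleting the plus at $g_s\in S$ breaks $a_{i_0}$ into the right block $k_{s+1}+\dots+k_b$, which becomes the new front, and the left block $k_a+\dots+k_s$, onto which the extra $1$ attaches, reproducing the boundary monomial. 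Combining the two matchings with the displayed identity yields the Lemma.

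The main obstacle is the bookkeeping of cyclic indices together with the two roles of the extra $+1$, which enters once through $k_l+1-j$ in $I_{l,j}$ and once through the appended $1$ in the last two right-hand terms; the whole identity rests on reconciling these two occurrences in the boundary correspondence of the final step. I would carry out both bijections with explicit index ranges to guarantee that every monomial is counted with the correct multiplicity, taking particular care with entries $k_l=1$, for which the range $\{1,\dots,k_l-1\}$ is empty and two adjacent boundary values are skipped in succession.
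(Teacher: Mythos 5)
Your argument is correct and rests on the same combinatorial core as the paper's proof: both identify the merged monomials of $\sum_{l,j}(j,k_{l+1},\ldots,k_{l-1},k_l+1-j)^t_m$ with splittings of the blocks of cyclic merges of $\bk$, via the gap correspondence and the fact that the split positions $j'=j+k_{l+1}+\cdots+k_b$ telescope over the entries of a block. The only difference is the order of bookkeeping: the paper first completes the inner range to $1\le j\le k_l$, which pulls out the third right-hand term at once and makes $j'$ sweep $\{1,\ldots,a_i\}$ with no gaps (the terms $j'=a_i$ then giving the second right-hand term), whereas you keep $j\le k_l-1$, record the skipped boundary positions as a correction $B$, and recover $B=(\text{third term})-(\text{second term})$ by your separate/merged case analysis of the trailing $1$ --- a logically equivalent rearrangement of the same bijections.
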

\begin{proof}
The proof is obtained in a similar way to Hirose--Murahara--Ono \cite[Proposition 6.6]{HMO20}. 
Set $\alpha(\boldsymbol{k})\coloneqq \{(k_i,\dots,k_r,k_1,\dots,k_{i-1}) \mid i=1,\dots,r \}$.
We have
\begin{align*}
&\sum_{l=1}^r \sum_{j=1}^{k_l-1}(j, k_{l+1}, \ldots, k_r, k_1, \ldots, k_{l-1}, k_l+1-j)^t_m \\
&=\sum_{l=1}^r \sum_{j=1}^{k_l}(j, k_{l+1}, \ldots, k_r, k_1, \ldots, k_{l-1}, k_l+1-j)^t_m
-\sum_{l=1}^r(k_l, \ldots, k_r, k_1, \ldots, k_{l-1}, 1)^t_m. 
\end{align*}
Since
\begin{align*}
&\sum_{l=1}^r \sum_{j=1}^{k_l}(j, k_{l+1}, \ldots, k_r, k_1, \ldots, k_{l-1}, k_l+1-j)^t_m \\
&=
 \sum_{(k_1',\dots,k_r')\in\alpha(\boldsymbol{k})}
 \sum_{1\le a\le b<r}
 \sum_{j=1}^{k_r'} 
 \sum_{\substack{ \square= \text{`+' or `,'} \\ \text{the number of `+'}=m-r-a+b+1 }} \\
 &\qquad\qquad\qquad\qquad\qquad\qquad\quad 
  (j+k_{1}'+\cdots+k_a',k_{a+1}'\square\cdots\square k_b',k_{b+1}'+\cdots+k_r'+1-j) \\
&=
 \sum_{(k_1',\dots,k_r')\in\alpha(\boldsymbol{k})}
 \sum_{l=1}^{r}
 \sum_{s=1}^{l}
 \sum_{j=1}^{k_s'} 
 \sum_{\substack{ \square=\text{`+' or `,'} \\ \text{the number of `+'}=m-l+1 }} \\
 &\qquad\qquad\qquad\qquad\qquad\qquad\quad 
  (j+k_{s+1}'+\cdots+k_l',k_{l+1}'\square\cdots\square k_r',k_1'+\cdots+k_s'+1-j) \\
&= 
 \sum_{(k_1',\dots,k_r')\in\alpha(\boldsymbol{k})}
 \sum_{l=1}^{r}
 \sum_{j=1}^{k_1'+\cdots+k_l'} 
 \sum_{\substack{ \square= \text{`+' or `,'} \\ \text{the number of `+'}=m-l+1 }}
 (j,k_{l+1}'\square\cdots\square k_r',k_1'+\cdots+k_l'+1-j)\\
&=\sum_{i=1}^{r-m} 
 \sum_{(a_1, \ldots, a_{r-m}) \in \supp C_m(\bk)}
 \sum_{j=1}^{a_i} 
 (j, a_{i+1}, \ldots, a_{r-m}, a_1, \ldots, a_{i-1}, a_i+1-j),  
\end{align*}
we get the result. 
\end{proof}

For a non-empty index $(k_1, \ldots, k_r) \ne (\{1\}^r)$, we define the element $F^t(k_1,\dots,k_r) \in \cI[t]$ corresponding to $ \mathrm{L.H.S.}-\mathrm{R.H.S.}$\ of Theorem \ref{cycsumF}. Set $\cI_1 \coloneqq \bigoplus_{r=0}^{\infty}\bQ[\bZ^r_{\ge1} \setminus (\{1\}^r)]$. We extend $F^t$ to the $\bQ$-linear map from $\cI_1$ to $\cI[t]$.

\begin{prop}\label{prop: key of CSF}
For a non-negative integer $m$ with $m\le r-1$ and any non-empty indices $(k_1,\dots,k_r)$ with $\bk=(k_1,\dots,k_r)\ne(1,\dots,1)$, let $F^t(\bk)_m$ be the coefficient of $F^t(\bk)$ at $t^m$. Then we have 
\begin{align*}
F^t(\bk)_m=F^0 (C_m(\bk)).
\end{align*}
\end{prop}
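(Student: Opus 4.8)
The plan is to group the terms of $F^t(\bk)$ exactly as in Theorem \ref{cycsumF}, extract the coefficient of $t^m$ from each group, and match the three resulting groups against the corresponding groups of $F^0(a)$ summed over $a\in\supp C_m(\bk)$. Writing $u_l\coloneqq(k_{l+1},\dots,k_r,k_1,\dots,k_{l-1},k_l+1)$, $v_l\coloneqq(k_{l+1}+1,k_{l+2},\dots,k_r,k_1,\dots,k_l)$, and $w_l\coloneqq(1,k_{l+1},\dots,k_r,k_1,\dots,k_l)$, we have $F^t(\bk)=(A)+(B)+(C)$ with $(A)=\sum_{l=1}^r\sum_{j=1}^{k_l-1}(j,k_{l+1},\dots,k_r,k_1,\dots,k_{l-1},k_l+1-j)^t$, $(B)=-(1-t)\sum_{l=1}^r\big((u_l)^t+(v_l)^t\big)$, and $(C)=-\sum_{l=1}^r(w_l)^t$. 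For an index $a$ of depth $r-m$, expanding $F^0(a)$ produces three analogous groups: its $B$-group is $-\sum_{i=1}^{r-m}\big((a_{i+1},\dots,a_{i-1},a_i+1)+(a_{i+1}+1,\dots,a_i)\big)$ and its $C$-group is $-\sum_{i=1}^{r-m}(1,a_{i+1},\dots,a_i)$. Each such $a$ has weight $k>r-m$, so $a\ne(\{1\}^{r-m})$ and $F^0(a)$ is defined.

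There are two tools. The first is a single bijection: placing $\bk$ on a circle with $r$ gaps, the datum ``starting bead $l$ together with a choice of $m$ of the $r-1$ non-seam gaps to be a plus'' corresponds to the datum ``a term $a$ of $C_m(\bk)$ (a cyclic merging of $m$ gaps) together with a cut $i$ at one of its $r-m$ surviving gaps'', both producing the same linear index; this is a multiset identity whose cardinalities agree since $\binom{r}{m}(r-m)=r\binom{r-1}{m}$. On plain rotations it gives $\sum_{l=1}^r(k_l,\dots,k_{l-1})^t_m=\sum_{(a)\in\supp C_m(\bk)}\sum_{i=1}^{r-m}(a_i,\dots,a_{i-1})$, and because the beads adjacent to the seam are exactly the first and last entries of the linear index, the same bijection yields $\sum_l(u_l)^t_m=\sum_{(a)}\sum_i(a_{i+1},\dots,a_{i-1},a_i+1)$ and $\sum_l(v_l)^t_m=\sum_{(a)}\sum_i(a_{i+1}+1,\dots,a_i)$; applying the single linear operation of appending or prepending a fixed $1$ to both sides is equally valid. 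The second tool is the elementary splitting of a $t$-index by its boundary gap: $(c_1,\dots,c_s,1)^t=\big[\text{the index }(c_1,\dots,c_s)^t\text{ with a }1\text{ appended}\big]+t\,(c_1,\dots,c_{s-1},c_s+1)^t$ and, symmetrically, $(1,c_1,\dots,c_s)^t=\big[\text{a }1\text{ prepended to }(c_1,\dots,c_s)^t\big]+t\,(c_1+1,c_2,\dots,c_s)^t$.

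The steps then run as follows. First, Lemma \ref{aaaaa} rewrites the coefficient of $(A)$ at $t^m$ as $\sum_{(a)}(\text{$A$-group of }a)$ plus the two corrections $X\coloneqq\sum_{(a)}\sum_{i=1}^{r-m}(a_i,\dots,a_{i-1},1)$ and $-Y$, where $Y\coloneqq\sum_{l=1}^r(k_l,\dots,k_{l-1},1)^t_m$. Second, the last-gap splitting of $Y$ combined with the appended-$1$ form of the bijection gives $Y=X+\sum_l(u_l)^t_{m-1}$, hence $X-Y=-\sum_l(u_l)^t_{m-1}$. Third, expanding the coefficient of $(B)$ by $[(1-t)Z]_m=Z_m-Z_{m-1}$ and adding $X-Y$ cancels the $(u_l)^t_{m-1}$ terms, leaving $-\sum_l(u_l)^t_m-\sum_l(v_l)^t_m+\sum_l(v_l)^t_{m-1}$; substituting the $B$-group identity $\sum_{(a)}(\text{$B$-group of }a)=-\sum_l(u_l)^t_m-\sum_l(v_l)^t_m$ then reduces the whole assertion to the single claim $\sum_l(w_l)^t_m-\sum_l(v_l)^t_{m-1}=\sum_{(a)}\sum_{i=1}^{r-m}(1,a_{i+1},\dots,a_i)$. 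This claim is exactly the first-gap splitting of $(w_l)^t$: the plus-case peels off $t\,(v_l)^t$, and the comma-case leaves a $1$ prepended to $\sum_l(k_{l+1},\dots,k_l)^t_m$, which by the prepended-$1$ form of the bijection equals the right-hand side. Reassembling the three matched groups gives $F^t(\bk)_m=\sum_{(a)\in\supp C_m(\bk)}F^0(a)=F^0(C_m(\bk))$.

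The main obstacle is proving the master bijection cleanly and then keeping the bookkeeping exact at the seam: one must pin down which gap plays the role of the cut, check that the endpoint increments and the appended or prepended $1$'s attach to the correct surviving boundary parts, and track the degree shifts between the coefficients of $t^m$ and $t^{m-1}$ produced both by the factor $1-t$ and by peeling a plus off a boundary gap, including the degenerate case $m=0$, where $(u_l)^t_{-1}=0$ and the correction $X-Y$ vanishes. Once these correspondences are fixed the required cancellations are forced, and the three groups match termwise.
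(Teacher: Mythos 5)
Your proposal is correct and follows essentially the same route as the paper's proof: your three matched groups, the correction identity $X-Y=-\sum_l(u_l)^t_{m-1}$, and the final claim for the $w_l$-terms are precisely the paper's Lemma \ref{aaaaa} together with the identities \eqref{eq: 1st of LHS of key}, \eqref{eq: 2nd of LHS of key}, \eqref{eq: 4th of LHS of key}, and \eqref{eq: 3rd of LHS of key}, assembled in the same way. The only difference is that you sketch a self-contained bead-and-gap bijection for the cyclic rotation identity, whereas the paper imports it from \cite[Lemma 6.3]{HMO20}.
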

\begin{proof}
From \cite[Lemma 6.3]{HMO20}, we have
\begin{multline}\label{eq: 1st of LHS of key}
\sum_{l=1}^r (k_{l+1}, \ldots, k_r, k_1, \ldots, k_{l-1}, k_l+1)^t_m\\
=\sum_{(a_1, \ldots, a_{r-m})\in \supp C_m(\bk)}\sum_{i=1}^{r-m}(a_{i+1}, \ldots, a_{r-m}, a_1, \ldots, a_{i-1}, a_i+1) 
\end{multline}
and 
\begin{multline}\label{eq: 2nd of LHS of key}
\sum_{l=1}^r (k_l+1, k_{l+1}, \ldots, k_r, k_1, \ldots, k_{l-1})^t_m\\
=\sum_{(a_1, \ldots, a_{r-m}) \in \supp C_m(\bk)}\sum_{i=1}^{r-m}(a_i+1, a_{i+1}, \ldots, a_{r-m}, a_1, \ldots, a_{i-1})
\end{multline}
for $0 \le m \le r-1$. 

From \cite[Lemma 6.3]{HMO20}, we see that the coefficient of 
\begin{align*}
\sum_{l=1}^{r}(1, k_{l+1}, \ldots, k_r, k_1, \ldots, k_l)^t
-t\sum_{l=1}^{r}(k_{l+1}+1, k_{l+2}, \ldots, k_r, k_1, \ldots, k_l)^t
\end{align*}
at $t^m$ is equal to
\begin{align}\label{eq: 3rd of LHS of key}
&\sum_{l=1}^{r}\bigl\{(1, k_{l+1}, \ldots, k_r, k_1, \ldots, k_l)^t_m
- (k_{l+1}+1, k_{l+2}, \ldots, k_r, k_1, \ldots, k_l)^t_{m-1}\bigr\}\\
&=\sum_{l=1}^{r}(1, (k_{l+1}, \ldots, k_r, k_1, \ldots, k_l)^t_m)
=\sum_{(a_1, \ldots, a_{r-m}) \in \supp C_m(\bk)}\sum_{i=1}^{r-m}(1, a_{i+1}, \ldots, a_{r-m}, a_1, \ldots, a_i), \nonumber
\end{align}
and the coefficient of 
\begin{align*}
\sum_{l=1}^{r}(k_{l+1}, \ldots, k_r, k_1, \ldots, k_l, 1)^t
-t\sum_{l=1}^{r}(k_{l+1},  \ldots, k_r, k_1, \ldots, k_{l-1}, k_l+1)^t
\end{align*}
at $t^m$ is equal to
\begin{align}\label{eq: 4th of LHS of key}
&\sum_{l=1}^{r}\bigl\{(k_{l+1}, \ldots, k_r, k_1, \ldots, k_l, 1)^t_m
- (k_{l+1}, \ldots, k_r, k_1, \ldots, k_{l-1}, k_l+1)^t_{m-1}\bigr\}\\
&=\sum_{l=1}^{r}((k_{l+1}, \ldots, k_r, k_1, \ldots, k_l)^t_m, 1)
=\sum_{(a_1, \ldots, a_{r-m}) \in \supp C_m(\bk)}\sum_{i=1}^{r-m}(a_{i+1}, \ldots, a_{r-m}, a_1, \ldots, a_i, 1). \nonumber
\end{align}
Therefore, from Lemma \ref{aaaaa}, \eqref{eq: 1st of LHS of key}, \eqref{eq: 2nd of LHS of key}, \eqref{eq: 3rd of LHS of key}, and \eqref{eq: 4th of LHS of key} we have
\begin{align*}
F^t(\bk)_m
=F^0(C_m(\bk)).
\end{align*}
This finishes the proof. 
\end{proof}

\begin{proof}[Proof of Theorem \ref{cycsumF}]
By the results of Kawasaki--Oyama \cite[Theorem~1.2]{KO19} and Hirose--Sato (unpublished) (or Hirose--Murahara--Ono \cite[Theorem~2.4]{HMO20}), we have $\zeta_{\mathcal{F}} (F^0(C_m(\bk)))=0$ for $0 \le m \le r-1$, which leads to
$\zeta_{\mathcal{F}} (F^t(\bk))=0$
because of Proposition~\ref{prop: key of CSF}. 
This completes the proof of Theorem \ref{cycsumF}.
\end{proof}

\section{Bowman--Bradley type formula}
Bowman--Bradley \cite[Theorem 5.1]{BB02} proved that the sum of MZVs for indices which consist of the shuffles of two kinds of the strings $\{1,3,\ldots,1,3\}$ and $\{2,\ldots,2\}$ is a rational multiple of a power of $\pi$. 
Saito--Wakabayashi \cite[Theorem 1.4]{SW16} obtained its counterparts for FMZ(S)Vs.
Here, we generalize them to $t$-FMZVs. Note that there seems to be no Bowman--Bradley type theorem in $t$-MZV.

For $(l, m) \in \bZ^2_{\ge0}\setminus\{(0, 0)\}$, $I_{l, m}$ denote the set of all sequences of the form 
\begin{align*}
 \ba=(a_1, \ldots, a_l; b_1, \ldots, b_l; c_1, \ldots, c_m),
\end{align*}
where $a_1, \ldots, a_l$ and $b_1, \ldots, b_l$ are odd positive integers and $c_1, \ldots, c_m$ are even positive integers. 
We define a $\mathbb{Q}$-bilinear product $\sha$ on $\mathcal{I}$ inductively by setting
\begin{align*}
 \boldsymbol{k} \sha \emptyset 
 &=\emptyset \sha \boldsymbol{k}=\boldsymbol{k}, \\ 
 (k _1, \boldsymbol{k}) \sha (l_1,\boldsymbol{l}) 
 &=(k_1, \boldsymbol{k} \sha (l_1,\boldsymbol{l})) + (l_1, (k_1,\boldsymbol{k}) \sha \boldsymbol{l})
\end{align*} 
for all indices $\boldsymbol{k}$, $\boldsymbol{l}$ and all positive integers $k_1$, $l_1$. 
For $\ba=(a_1, \ldots, a_l; b_1, \ldots, b_l; c_1, \ldots, c_m) \in I_{l, m}$, set
\begin{align*}
 B_{\ba}
 &\coloneqq \sum_{\sigma, \tau \in \fS_l}
  (a_{\sigma(1)}, b_{\tau(1)}, \ldots, a_{\sigma(l)}, b_{\tau(l)}) 
  \sha (c_1) \sha \cdots\sha (c_m)  \in \mathcal{I}, 
\end{align*}
where $\fS_r$ is the symmetric group of degree $r$. 
Then, Theorem \ref{BBtype} can be generalized as follows: 
\begin{thm}\label{BBtype_main}
 For any $(l, m) \in \bZ^2_{\ge0} \setminus\{(0, 0)\}$ and $\ba \in I_{l, m}$, we have 
 $\zeta_{\cF}^{t}(B_{\ba})=0$.
\end{thm}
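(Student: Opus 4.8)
The plan is to reduce Theorem \ref{BBtype_main} to the already-known Bowman--Bradley type formulas for FMZVs and FMZSVs (Saito--Wakabayashi \cite[Theorem 1.4]{SW16}) by the same degree-in-$t$ mechanism used in Section 2, rather than attacking $\zeta_{\cF}^t(B_{\ba})$ directly. The key observation is that the product $\sha$ and the bookkeeping of ``$t$ counts the number of plusses'' interact cleanly: applying $\zeta_{\cF}^t$ to $B_{\ba}$ amounts, degree by degree in $t$, to summing ordinary $\zeta_{\cF}$ of all ways of inserting a prescribed number $m$ of plus signs into each index appearing in $B_{\ba}$. So I would first set up a combinatorial identity expressing the coefficient of $t^m$ in $\zeta_{\cF}^t(B_{\ba})$ as $\zeta_{\cF}$ applied to the sum over $\supp$ of the depth-reduced (plus-contracted) indices, in the spirit of Proposition \ref{prop: key of CSF}.

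First I would recall the defining recursion for $\sha$ and note that $B_{\ba}$ is built from the two kinds of strings (odd entries $a_i,b_i$ alternating, and even entries $c_j$) via $\sha$. The crucial structural point is that contracting a block of consecutive entries by replacing a comma with a plus sends $B_{\ba}$ to a $\mathbb{Q}$-linear combination of Bowman--Bradley sums of the \emph{same shape} but with fewer, possibly larger, entries. Since $a,b$ are odd and $c$ is even, a plus joining two odds yields an even, a plus joining an odd and an even yields an odd, and a plus joining two evens yields an even; one checks that in all cases the contracted index still lies in the span of $B_{\ba'}$ for some $\ba' \in I_{l', m'}$ with appropriate parities preserved. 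This is exactly the parity-closure property that makes the Saito--Wakabayashi vanishing stable under taking the $t$-index, and it is the analogue of the fact that $C_m(\bk)$ stays inside the domain on which the cyclic sum formula holds.

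With that closure in hand, the argument runs as follows. For each fixed degree $m$, I would show that the coefficient of $t^m$ in $\zeta_{\cF}^t(B_{\ba})$ equals $\zeta_{\cF}(B_{\ba}^t)_m$, where $B_{\ba}^t$ is the $t$-index of $B_{\ba}$, and then that $(B_{\ba})^t_m$ is itself a $\mathbb{Q}$-linear combination of genuine Bowman--Bradley sums $B_{\ba'}$ with $\ba' \in I_{l', m'}$. Applying Saito--Wakabayashi's theorem $\zeta_{\cF}(B_{\ba'})=0$ to each term gives that the coefficient of every $t^m$ vanishes, hence $\zeta_{\cF}^t(B_{\ba})=0$ as an element of $\cF[t]$. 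The same proof works verbatim for $\cF=\cA$ and $\cF=\cS$ since only the linear vanishing statement is invoked.

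The main obstacle I anticipate is the combinatorial bookkeeping in the closure step: verifying precisely that every term of $(B_{\ba})^t_m$ reassembles into a Bowman--Bradley sum in the correct index set $I_{l', m'}$, keeping track of how the $\fS_l \times \fS_l$ symmetrization and the shuffles $\sha (c_1) \sha \cdots \sha (c_m)$ behave when entries are merged by plus signs. One must confirm that a plus-contraction never produces a forbidden pattern (for instance two adjacent $a$-type slots collapsing in a way that breaks the alternating $a,b$ structure) and that the resulting multiplicities are exactly those prescribed by the $B_{\ba'}$ definition. Carefully matching these symmetry factors, so that no spurious terms survive and the Saito--Wakabayashi hypotheses apply to each summand, is the delicate part; once it is settled, the degree-wise vanishing is immediate.
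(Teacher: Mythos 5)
Your reduction to degree-wise vanishing and your parity observation (odd $+$ even $=$ odd stays in an $a$/$b$ slot, odd $+$ odd $=$ even and even $+$ even $=$ even go to $c$ slots) are both correct and match the paper's setup. But the core of your argument --- that the coefficient of $t^m$ in the $t$-index of $B_{\ba}$ is a $\bQ$-linear combination of genuine Bowman--Bradley sums $B_{\ba'}$ with $\ba' \in I_{l', m'}$ --- is asserted, not proven; you explicitly defer it as ``the delicate part,'' and that deferred step is the entire mathematical content of the theorem. Note also that the claim only makes sense at the level of the full symmetrized sums: an individual plus-contracted index does not lie in the span of Bowman--Bradley sums; only after summing over $\sigma, \tau \in \fS_l$, over the shuffles with $(c_1), \ldots, (c_m)$, and over all placements of the plus signs do the terms reassemble, and the multiplicities must be shown to match exactly.

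The paper's proof shows both why this is nontrivial and how to make it tractable: instead of decomposing the degree-$m$ coefficient directly for arbitrary $m$ (where a single entry can be a merge of many original entries, subject to the constraint $|p_i - q_i| \le 1$ forced by the alternating $a, b$ pattern), it proves a one-step recursion (Proposition \ref{Key prop for BB}) expressing $(n+1)B^{(n+1)}_{l, m}[\ba]$ as an explicit combination of terms $B^{(n)}_{l', m'}[\ba']$ of the same shape, established by counting how many times a fixed contracted index occurs on each side (the counts $p_i!\,q_i!\,(p_i+q_i+1)_{r_i}$ and the identity $R_1+R_2+R_3=L$). Induction on $n$, with Saito--Wakabayashi as the base case $n=0$, then yields the vanishing without ever needing your stronger structural statement. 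Your claimed decomposition is in fact true --- it follows by iterating the paper's recursion --- but as proposed you supply no proof of it, and proving it directly for all $m$ at once would require strictly heavier bookkeeping than the paper's comparison of two successive degrees. So as it stands the proposal has a genuine gap precisely at its crux.
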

\begin{proof}[Proof of Theorem \ref{BBtype}]
 Theorem \ref{BBtype} is obtained by setting 
 $\ba=(a, \ldots, a; b, \ldots, b; c, \ldots, c) \in I_{l, m}$
 in Theorem \ref{BBtype_main}. 
\end{proof}

For non-negative integers $l, m, n$ with $(l, m)\ne(0, 0)$ 
and $\ba \in I_{l, m}$, 
we denote by $B^{(n)}_{l, m}[\ba]$ the element in $\cI$ which is the coefficient of the $t$-index $B^t_{\ba}$ at $t^n$. Thus we have $B^t_{\ba}=\sum_{n=0}^{2l+m-1}B^{(n)}_{l, m}[\ba]t^n$.
Then, 
we see that the statement of Theorem \ref{BBtype_main} is equivalent to 
\begin{align}\label{BB equiv}
 \zeta_{\cF}\bigl(B^{(n)}_{l, m}[\ba]\bigr)=0
\end{align}
for all $n$ with $0\le  n \le 2l+m-1$. To prove \eqref{BB equiv}, we need the following proposition which relates $B^{(n+1)}_{l, m}[\ba]$ and $B^{(n)}_{l, m}[\ba]$.

\begin{prop}[Key proposition]\label{Key prop for BB}
 For non-negative integers $l, m, n$ with $(l, m)\ne(0, 0)$ 
 and $\ba=(a_1, \ldots, a_l; b_1, \ldots, b_l; c_1, \ldots, c_m) \in I_{l, m}$, we have
 \begin{align}\label{n+1 to n}
  &(n+1)B^{(n+1)}_{l, m}[\ba]\\
  &= 2\sum_{j=1}^m\sum_{i=1}^l\bigl\{B^{(n)}_{l, m-1}[(a_1, \ldots, a_{i-1}, a_i+c_j, a_{i+1}, \ldots, a_l; b_1, \ldots, b_l; c_1, \ldots, c_{j-1}, c_{j+1}, \ldots, c_m)] \nonumber \\
 &\qquad\qquad\quad +B^{(n)}_{l, m-1}[(a_1, \ldots, a_l; b_1, \ldots, b_{i-1}, b_i+c_j, b_{i+1}, \ldots, b_l; c_1, \ldots, c_{j-1}, c_{j+1}, \ldots, c_m)]\bigr\} \nonumber \\
 &\quad +\sum_{i, j=1}^l B^{(n)}_{l-1, m+1}[(a_1, \ldots, a_{i-1}, a_{i+1}, \ldots, a_l; b_1, \ldots, b_{j-1}, b_{j+1} \ldots, b_l; a_i+b_j, c_1, \ldots, c_m)] \nonumber \\
 &\quad +2\sum_{1\le i<j \le m}B^{(n)}_{l, m-1}[(a_1, \ldots, a_l; b_1, \ldots, b_l; c_i+c_j, c_1, \ldots, \overset{\text{remove}}{c_i}, \ldots, \overset{\text{remove}}{c_j}, \ldots, c_m)]. \nonumber
 \end{align}
\end{prop}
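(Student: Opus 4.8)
The plan is to interpret the factor $(n+1)$ on the left-hand side as the effect of differentiating in $t$, and thereby reduce the statement to a single identity in $\cI$. The starting point is the observation that, for any index $(k_1,\dots,k_r)$,
\[
\frac{d}{dt}(k_1,\dots,k_r)^t=\sum_{i=1}^{r-1}(k_1,\dots,k_{i-1},k_i+k_{i+1},k_{i+2},\dots,k_r)^t .
\]
Indeed, in the defining sum of the $t$-index each monomial carries the weight $t^{\#+}$, so differentiating picks out, one at a time, each gap that has been filled by a `+'; fixing that gap to be `+' merges the two neighbouring entries and leaves the remaining gaps free, which is exactly the $t$-index of the merged index. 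Writing $D\colon\cI\to\cI$ for the $\bQ$-linear \emph{merge operator} $D(k_1,\dots,k_r)=\sum_{i=1}^{r-1}(k_1,\dots,k_i+k_{i+1},\dots,k_r)$, the displayed identity extends $\bQ$-linearly to $\frac{d}{dt}I^t=(DI)^t$ for every $I\in\cI$.

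Applying this with $I=B_{\ba}$ gives $\frac{d}{dt}B^t_{\ba}=(DB_{\ba})^t$, whose coefficient at $t^n$ equals $(n+1)B^{(n+1)}_{l,m}[\ba]$. Hence, if $R\in\cI$ denotes the element obtained from the right-hand side of \eqref{n+1 to n} by replacing each $B^{(n)}_{\bullet}[\ba']$ with the corresponding $B_{\ba'}\in\cI$ (the same shuffle sum, without taking the $t$-index), then the whole proposition follows once we prove the $t$-free identity
\[
DB_{\ba}=R
\]
in $\cI$: taking $t$-indices of both sides and extracting the coefficient of $t^n$ recovers \eqref{n+1 to n}. This is the reduction I would carry out first.

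It then remains to compute $DB_{\ba}$ combinatorially. Each index occurring in $B_{\ba}$ is an interleaving of the alternating string $(a_{\sigma(1)},b_{\tau(1)},\dots,a_{\sigma(l)},b_{\tau(l)})$ with the even entries $c_1,\dots,c_m$, and $D$ sums over all merges of two adjacent entries. I would classify these merges by the types of the two merged letters. Because the $a$'s and $b$'s strictly alternate in the defining string, no two $a$'s (and no two $b$'s) are ever adjacent, so the possible merges are exactly $a_i$--$c_j$, $b_i$--$c_j$, $c_i$--$c_j$, and $a_i$--$b_j$. For the three merges involving a $c$, the merged letter lands in a determined slot: an $a_i$--$c_j$ (resp.\ $b_i$--$c_j$) merge replaces $a_i$ (resp.\ $b_i$) by $a_i+c_j$ (resp.\ $b_i+c_j$) in the odd string while the remaining $c$'s shuffle freely, and a $c_i$--$c_j$ merge replaces the pair by $c_i+c_j$; in each case the two possible positions of the $c$ (left/right of the odd letter, or the two orders $c_ic_j$, $c_jc_i$) produce the factor $2$ and reassemble precisely the shuffle defining the smaller object $B_{l,m-1}[\cdots]$. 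These three contributions match the first and last lines of $R$ by a direct bookkeeping that I regard as routine.

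The main obstacle is the $a_i$--$b_j$ term, which must be shown to equal $\sum_{i,j}B_{l-1,m+1}[\,(\dots;\dots;a_i+b_j,c_1,\dots,c_m)]$ with \emph{no} factor $2$. Here the merged letter $e\coloneqq a_i+b_j$ is \emph{even} and therefore becomes a free shuffle letter of the smaller object, so the subtlety is to show that summing the merged results over all $(\sigma,\tau)$ (and all $c$-interleavings) in which $a_i$ and $b_j$ are adjacent reconstructs the shuffle of $e$ into the alternating string of the remaining $l-1$ pairs together with the $c$'s. The key point is a bijection: a within-pair adjacency $a_ib_j$ (occurring when $\sigma(k)=i,\tau(k)=j$) deletes a whole block and inserts $e$ at a block-boundary gap of the remaining $2(l-1)$-letter string, while a cross-pair adjacency $b_ja_i$ (occurring when $\tau(k)=j,\sigma(k+1)=i$) inserts $e$ at an interior gap; as $(\sigma,\tau)$ range over all admissible permutations these two families cover each of the $2l-1$ insertion positions exactly once for every residual pair $(\sigma',\tau')\in\fS_{l-1}\times\fS_{l-1}$. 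This is exactly what is needed to turn the fixed-position merged string into the free shuffle $(\text{alt.\ string})\sha(e)$, and hence---after the $c$'s are shuffled back in---into $B_{l-1,m+1}[\cdots]$. Establishing this bijection cleanly, and checking that it is compatible with the free interleaving of the $c$'s, is the crux of the argument; the small cases $l=1,m=1$ and $l=2,m=0$ already exhibit the mechanism and serve as a useful sanity check.
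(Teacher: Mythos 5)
Your proposal is correct, and it reaches \eqref{n+1 to n} by a genuinely different route than the paper. The paper's proof is a multiplicity count: it fixes a monomial $\bm^{\pi}$ together with the composition $\pi$ recording which $a$'s, $b$'s, $c$'s merge into each entry, counts its occurrences on both sides --- $(n+1)\prod_i p_i!\,q_i!\,(p_i+q_i+1)_{r_i}$ on the left, three Pochhammer-type sums $R_1,R_2,R_3$ on the right --- and finishes with the quadratic identity $2PR+P(P-1)+R(R-1)=(P+R)(P+R-1)$ applied with $P=p_i+q_i$, $R=r_i$. You instead interpret the factor $(n+1)$ as differentiation, via $\frac{d}{dt}I^t=(DI)^t$ for your merge operator $D$, which packages the identities \eqref{n+1 to n} for all $n$ simultaneously into the single $t$-free identity $DB_{\ba}=R$ in $\cI$, and you then prove that identity by classifying adjacent-pair merges inside the shuffle words. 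What your route buys is structure: the asymmetry of coefficients in the statement (the $2$ on the $a+c$, $b+c$, $c+c$ terms, none on the $a+b$ term) is explained at once, because two letters lying in different shuffle factors can be adjacent in either order, while $a_i$ and $b_j$ inside the alternating string have a forced order; the paper's computation recovers this dichotomy only after algebraic simplification. What the paper's route buys is brevity of rigor: it needs no bijection, only counting and one elementary identity. Finally, the step you flag as the crux is indeed correct and closes in a few lines: a within-pair adjacency ($\sigma(k)=i$, $\tau(k)=j$) puts $e=a_i+b_j$ at the even gap $2(k-1)$ of the residual alternating string, a cross-pair adjacency ($\tau(k)=j$, $\sigma(k+1)=i$) puts $e$ at the odd gap $2k-1$, and for a fixed $(\sigma',\tau')\in\fS_{l-1}\times\fS_{l-1}$ and a fixed gap there is exactly one way to re-insert $i$ into $\sigma'$ and $j$ into $\tau'$ realizing it; hence the $l$ even and $l-1$ odd gaps --- all $2l-1$ insertion slots --- occur exactly once, giving $(\text{residual string})\sha(e)$, and since un-merging is a bijection between interleavings of the merged string with the $c$'s and interleavings of the original string in which $a_i,b_j$ are adjacent, the $c$'s pass through untouched.
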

\begin{proof}
 We note that the depths of all the indices appearing in \eqref{n+1 to n} are all $s\coloneqq 2l+m-(n+1)$. 
 
 First, consider an index $\bm=(m_1, \ldots, m_s)$ with weight $\sum_{i=1}^la_i+\sum_{i=1}^lb_i+\sum_{j=1}^mc_j$. We can observe that if the index $\bm$ appears in $B^{(n+1)}_{l, m}[\ba]$, each $m_i$ can be written as
 \begin{align} \label{m_i}
  m_i=a^{(i)}_{1}+\cdots+a^{(i)}_{p_i}+b^{(i)}_{1}+\cdots+b^{(i)}_{q_i}+c^{(i)}_{1}+\cdots+c^{(i)}_{r_i}
  \qquad (i=1,\ldots,s),
 \end{align}
 where
 \begin{equation}\label{eq: compositions}
\begin{split}
\{a^{(1)}_1, \ldots, a^{(1)}_{p_1}, \ldots, a^{(s)}_1, \ldots, a^{(s)}_{p_s}\}
  &\coloneqq \{a_1, \ldots, a_l\}, \\
  \{b^{(1)}_1, \ldots, b^{(1)}_{q_1}, \ldots, b^{(s)}_1, \ldots, b^{(s)}_{q_s}\}
   &\coloneqq \{b_1, \ldots, b_l\}, \\
   \{c^{(1)}_1, \ldots, c^{(1)}_{r_1}, \ldots, c^{(s)}_1, \ldots, c^{(s)}_{r_s}\}
   &\coloneqq \{c_1, \ldots, c_m\}
\end{split}
\end{equation}
satisfying
 \begin{equation}\label{eq: sum of p,q,r}
 \begin{split}
 &|p_i-q_i| \le1,\\
  &\sum_{i=1}^s(p_i+q_i+r_i-1)
  =n+1.
  \end{split}
 \end{equation}
Let $\pi$ denote such composition \eqref{eq: compositions} of $\{a_1, \ldots, a_l\}$, $\{b_1, \ldots, b_l\}$ and $\{c_1, \ldots, c_m\}$ satisfying \eqref{eq: sum of p,q,r}. For such composition $\pi$, we set the index $\bm^{\pi} \coloneqq (m^{\pi}_1, \ldots, m^{\pi}_s)$ with
 \begin{align*} 
  m^{\pi}_i=a^{(i)}_{1}+\cdots+a^{(i)}_{p_i}+b^{(i)}_{1}+\cdots+b^{(i)}_{q_i}+c^{(i)}_{1}+\cdots+c^{(i)}_{r_i}
  \qquad (i=1,\ldots,s).
 \end{align*}
To prove the equality \eqref{n+1 to n}, for any fixed composition $\pi$ satisfying \eqref{eq: sum of p,q,r}, it suffices to show that the number of $\bm^{\pi}$ appearing in the both sides of \eqref{n+1 to n} coincide. 

Let $L$ and $R_1, R_2, R_3$ be the numbers of $\bm^{\pi}$ appearing 
in the left-hand side and the first, the second, the third sums on the right-hand side, respectively.
We calculate $L$, first.  For a fixed composition $\pi$ with \eqref{eq: sum of p,q,r}, 
we find that there are $p_i!q_i!(p_i+q_i+1)_{r_i}$ ways to get each $m^{\pi}_i$. Here,
 \begin{align*}
  (x)_n\coloneqq 
  \begin{cases}
   1 & \text{if $n=0$},\\
   x(x+1)\cdots (x+n-1) & \text{if $n>0$}. 
  \end{cases}
 \end{align*}
 Hence we find
 \begin{align*}
  L=(n+1)\prod_{i=1}^sp_i!q_i!(p_i+q_i+1)_{r_i}.
 \end{align*}

 Put $A_i\coloneqq p_i!q_i!(p_i+q_i+1)_{r_i}$.
 Similarly on the right-hand side, we have
 \begin{align*} 
  R_1
  &=2\sum_{i=1}^s (p_i+q_i)r_ip_i!q_i!(p_i+q_i+1)_{r_i-1}\prod_{j \ne i}A_j, \\
  R_2
  &=\sum_{i=1}^s p_iq_i(p_i-1)!(q_i-1)!(p_i-1+q_i-1+1)_{r_i+1}\prod_{j \ne i}A_j \\
  &=\sum_{i=1}^s p_i!q_i!(p_i+q_i-1)_{r_i+1}\prod_{j \ne i}A_j, \\
  R_3
  &=2\sum_{i=1}^s \binom{r_i}{2}p_i!q_i!(p_i+q_i+1)_{r_i-1}\prod_{j \ne i}A_j.  
 \end{align*}
 Thus we get
 \begin{align*}
  &R_1+R_2+R_3 \\
  &=\sum_{i=1}^s \left( 2(p_i+q_i)r_i+(p_i+q_i-1)(p_i+q_i)+r_i(r_i-1) \right) p_i!q_i!(p_i+q_i+1)_{r_i-1}\prod_{j\ne i}A_j\\
  &=\sum_{i=1}^s(p_i+q_i+r_i-1)p_i!q_i!(p_i+q_i+1)_{r_i}\prod_{j\ne i}A_j\\
  &=\left( \sum_{i=1}^s(p_i+q_i+r_i-1) \right) \prod_{i=1}^sA_i\\
  &=(n+1)\prod_{i=1}^sA_i
  =(n+1)\prod_{i=1}^sp_i!q_i!(p_i+q_i+1)_{r_i}.
 \end{align*}
 This completes the proof.
\end{proof}

\begin{proof}[Proof of Theorem \ref{BBtype_main}]
We prove the statement \eqref{BB equiv} by the induction on $n \ge0$. 
The case $n=0$ is just the usual Bowman--Bradley type formula for FMZVs, which was proved by Saito--Wakabayashi \cite[Theorem 1.4]{SW16}. 

We assume \eqref{BB equiv} holds for $n\ge 0$. 
Then, since $a_i+c_j, b_i+c_j$ are odd for all $i, j$ and $a_i+b_j, c_i+c_j$ are even for all $i, j$, we can use the induction hypothesis for each summand of the right-hand side of \eqref{n+1 to n}. 
Therefore we obtain $\zeta_{\cF}\bigl(B^{(n+1)}_{l, m}[\ba]\bigr)=0$.
\end{proof}

\section{Weighted sum formula}
In this section, we prove a weighted sum formula for $t$-FMZVs. 
Many types of weighted sum formulas for MZ(S)Vs are already known (for example, Ohno--Zudilin \cite[Theorem 3]{OZ08}, Machide \cite[Corollary 1.2]{Mac15}, Nakamura \cite[Theorem 1.1, 1.2]{Nak08}, Li--Qin \cite[(1.1), (1.2)]{LQ19}). 
Li--Qin \cite[(3.11)]{LQ16} simplified Guo-Xie's result \cite[Theorem 1.1, 2.2]{GX09} and gave an analogous result \cite[(3,12)]{LQ16} for MZSVs. 
Moreover, Li \cite[Theorem 3.11]{Li19} interpolated Guo--Xie's result and Li--Qin's result for $t$-MZVs. 

On the other hand, another type of the weighted sum formula for FMZ(S)Vs was established recently (see Hirose--Murahara--Saito \cite[Theorem 1.2]{HMS19} for $\cA$-MZVs and Murahara \cite[Theorem 1.1]{Mur18} for $\cS$-MZVs). 
We interpolate the formula in Murahara \cite[Theorem 1.1]{Mur18}, more precisely that of the case $i=r$ and with a slightly different coefficient, for $t$-FMZVs (Theorem \ref{wt_sum_formula}). Note that the counterpart in $t$-MZV of Theorem 1.5 does not seem to exist so far.

For positive integers $k, r$  with $1 \le r \le k$ and a non-negative integer $n$ with $0\le n \le r-1$, we set 
\begin{align*}
 F^t(k,r)
 &\coloneqq \sum_{\substack{k_{1}+\cdots+k_{r}=k\\k_1,\dots,k_r\ge1}}
  2^{k_r-1} \cdot (k_1,\ldots,k_r)^t \in \cI[t], \\
 F(k,r,n)
 &\coloneqq \text{the coefficient of $t^n$ in $F^t(k,r)$} \in \cI.    
\end{align*}

In these notation, we rewrite Theorem \ref{wt_sum_formula}.
\begin{thm} \label{main}
 For a positive integer $k$ and an odd positive integer $r$ with $1\le r\le k$, we have
 \begin{align*}
  \zeta_{\mathcal{F}}^t (F^t(k,r))
  =0.
 \end{align*}
\end{thm}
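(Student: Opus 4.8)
The plan is to follow the same coefficient-by-coefficient strategy that drives the Bowman--Bradley argument in Section 3. Since $\zeta^{t}_{\mathcal{F}}$ is $\bQ[t]$-linear and $F^{t}(k,r)=\sum_{n=0}^{r-1}F(k,r,n)\,t^{n}$ with each $F(k,r,n)\in\cI$, one has
\begin{align*}
 \zeta^{t}_{\mathcal{F}}(F^{t}(k,r))=\sum_{n=0}^{r-1}\zeta_{\mathcal{F}}(F(k,r,n))\,t^{n},
\end{align*}
so it suffices to prove $\zeta_{\mathcal{F}}(F(k,r,n))=0$ for every $n$ with $0\le n\le r-1$. The degree-zero coefficient is $F(k,r,0)=\sum_{k_1+\cdots+k_r=k}2^{k_r-1}(k_1,\dots,k_r)$, which is exactly the weighted sum of Murahara \cite{Mur18} in the case $i=r$; hence $\zeta_{\mathcal{F}}(F(k,r,0))=0$ holds for odd $r$ and provides the base case of an induction on $n$.

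For the inductive step I would establish a key proposition in the spirit of Proposition \ref{Key prop for BB}, relating the coefficient at $t^{n+1}$ to those at $t^{n}$. The mechanism is the same $(n+1)$-fold overcounting: every index produced with $n+1$ plusses arises in exactly $n+1$ ways by first choosing $n$ plusses (a summand of $F(k,r,n)$) and then converting one of the remaining commas into a plus, i.e.\ merging two adjacent blocks. Multiplying by $2^{k_r-1}$ and summing over all compositions $k_1+\cdots+k_r=k$ should yield an identity of the shape
\begin{align*}
 (n+1)F(k,r,n+1)=\bigl(\text{sum of block-merge operators applied to }F(k,r,n)\bigr)\in\cI,
\end{align*}
where merges away from the tail leave the weighted component $k_r$ untouched, while a merge at the final comma collapses the geometric sum $\sum 2^{k_r-1}$ into a weight of type $2^{\,l}-1$.

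Applying $\zeta_{\mathcal{F}}$ and invoking the induction hypothesis is where the real difficulty lies, and this is the step I expect to be the main obstacle. Unlike the Bowman--Bradley situation, where the base FMZV formula vanishes for \emph{all} admissible parameters, Murahara's weighted sum vanishes only in odd depth; since each block-merge drops the depth by one and hence flips its parity, the terms on the right-hand side can land in even depth, where the weighted sum is \emph{not} individually zero. The crux is therefore to track the weight $2^{k_r-1}$ carefully through the tail-merges and to reorganize the merged right-hand side so that the whole combination lies in the span of relations already known to vanish. I expect this to require Murahara's formula in its general-depth (nonzero) form, so that the even-depth contributions cancel against one another, possibly assisted by the reversal/duality symmetry of FMZVs to handle the boundary terms coming from the tail-merge. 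Once this cancellation is arranged, the induction closes and $\zeta_{\mathcal{F}}(F(k,r,n))=0$ follows for every $n$, completing the proof.
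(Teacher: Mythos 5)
Your reduction to $\zeta_{\cF}(F(k,r,n))=0$ for each $0\le n\le r-1$ and your base case (Murahara's weighted sum formula at $n=0$) agree with the paper's own Remark after Theorem \ref{main}. But the inductive step is a genuine gap, and it is the entire content of the theorem. The merge identity you propose is true but vacuous: if $M$ denotes the operator summing over all ways of fusing two adjacent entries of an index, then $M(\bk^t_n)=(n+1)\bk^t_{n+1}$ holds for \emph{every} index $\bk$ by the very definition of the $t$-index, so $(n+1)F(k,r,n+1)=M(F(k,r,n))$ is automatic and carries no information. What made Proposition \ref{Key prop for BB} usable in Section 3 is not the overcounting identity itself but the fact that the Bowman--Bradley family is \emph{closed} under merging: $M(B^{(n)}_{l,m}[\ba])$ re-expands as a non-negative integer combination of elements $B^{(n)}_{l',m'}[\ba']$ of the same family at level $n$ (parity of the entries is preserved because odd$+$even is odd and odd$+$odd, even$+$even are even), so the induction hypothesis applies termwise. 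The family $F(k,r,n)$ has no such closure: tail merges destroy the weight $2^{k_r-1}$ (producing weights of the form $2^{l}-1$, as you note) and every merge flips the depth parity, so the merged terms are not of the form $F(k',r',n)$ nor in any family you have proved to vanish. You correctly flag this as the main obstacle, but flagging it is not resolving it; no cancellation mechanism is supplied, and so the induction does not close.

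The paper's proof is in fact not an induction on $n$ at all. For each fixed $n$ it establishes an exact identity in $\cI$: setting $H(k,r,n)=F(k,r,n)+S'(k,r,n)+G'(k,r,n)$, where $S'(k,r,n)$ is a binomial multiple of the full sum $S(k,r-n)$ and $G'(k,r,n)$ is an explicit weighted combination of the Ohno-type elements $G(\bk,m)$, it shows via Lemmas \ref{F}, \ref{F+S+G_1}, and \ref{G_2} (culminating in Proposition \ref{key_lem}) that $H(k,r,n)+\phi(H(k,r,n))$ is a rational multiple of $(\{1\}^k)$; the oddness of $r$ is used exactly in the $\phi$-symmetrization of $G'_2$ in Lemma \ref{G_2}. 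Then $\zeta_{\cF}(H(k,r,n))=0$ follows from the duality $Z_{\cF}(w)=Z_{\cF}(\phi(w))$ together with $\zeta_{\cF}(\{1\}^k)=0$, while $\zeta_{\cF}(S'(k,r,n))=0$ is the sum formula and $\zeta_{\cF}(G'(k,r,n))=0$ is Oyama's Ohno-type relation \cite{Oya18}; subtracting gives $\zeta_{\cF}(F(k,r,n))=0$. So the auxiliary inputs you guessed at (``Murahara's formula in general-depth form'' and ``duality'') are in the right spirit, but they must enter through this explicit decomposition of $F(k,r,n)$ into known-vanishing pieces, not through an induction on the $t$-degree; to repair your argument you would essentially have to reconstruct that decomposition, at which point the induction becomes unnecessary.
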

\begin{rem}
 For the proof of Theorem \ref{main}, it suffices to show $\zeta_{\mathcal{F}} (F(k,r,n))=0$ for all $n$ with $0 \le n\le r-1$. 
 Note that $\zeta_{\cF}(F(k, r, 0))=0$ is essentially equivalent to the formula \cite[Theorem 1.1]{Mur18} with $i=r$.
\end{rem}

To prove Theorem \ref{main}, we prepare some notation. 
\begin{defn}
\begin{enumerate}
\item For positive integers $k,r$ with $1\le r\le k$, we set 
\begin{align*}
 S(k,r)
 &\coloneqq \sum_{\substack{k_{1}+\cdots+k_{r}=k\\k_1,\dots,k_r\ge1}}
  (k_1,\ldots,k_r) \in \cI.     
\end{align*}
\item We always assume that $\be$ runs over sequences of non-negative integers. 
The weight and depth for $\be$ are also defined.
For an index $\boldsymbol{k}$ and a non-negative integer $m$, we set  
\begin{align*}
 G_{1}(\boldsymbol{k},m)
 &\coloneqq \sum_{\substack{\wt(\boldsymbol{e})=m \\ \dep(\boldsymbol{e})=\dep(\boldsymbol{k})}}
  (\boldsymbol{k}\oplus\boldsymbol{e}) \in \cI,\\
 G_{2}(\boldsymbol{k},m)
 &\coloneqq \sum_{\substack{\wt(\boldsymbol{e})=m \\ \dep (\boldsymbol{e})=\dep(\boldsymbol{k}^\vee)}}
  ((\boldsymbol{k}^{\vee}\oplus\boldsymbol{e})^{\vee}) \in \cI, \\
 G(\boldsymbol{k},m)
 &\coloneqq G_{1}(\boldsymbol{k},m)-G_{2}(\boldsymbol{k},m) \in \cI.
\end{align*}
Here, for an index $\boldsymbol{k}=(k_{1}\ldots,k_{r})$ and a sequence of non-negative integers $\boldsymbol{e}=(e_{1},\ldots, e_{r})$ of the same depths,
the symbol $\boldsymbol{k}\oplus\boldsymbol{e}$ represents the componentwise sum, i.e., 
$\boldsymbol{k}\oplus\boldsymbol{e}\coloneqq (k_{1}+e_{1},\ldots,k_{r}+e_{r})$, and $\bk^\vee$ is the Hoffman dual of $\bk$ defined by 
\begin{align*}
 \bk^\vee
 \coloneqq (\underbrace{1, \ldots, 1}_{k_1}+\underbrace{1, \ldots, 1}_{k_2}+1, \ldots, 1+\underbrace{1, \ldots, 1}_{k_r}).
\end{align*}
We extend the Hoffman dual on indices to the $\bQ$-linear map on $\cI$ and we also denote the Hoffman dual of $w \in \cI$ by $w^{\vee}$.
\item For positive integers $k,r$ with $1\le r\le k$ and a non-negative integer $n$ with $n\le r-1$, we set
\begin{align*}
 S'(k, r, n)&\coloneqq -\binom{k-r+n}{n}S(k, r-n), \\
 G'(k, r, n)&\coloneqq -\sum_{m=0}^{k-r-1}2^{k-r-m-1}\binom{m+n}{n}G\bigl((\{1\}^{r-n-1}, k-r-m+1), m+n\bigr), \\
 G'_1(k, r, n)&\coloneqq -\sum_{m=0}^{k-r-1}2^{k-r-m-1}\binom{m+n}{n}G_1\bigl((\{1\}^{r-n-1}, k-r-m+1), m+n\bigr), \\
 G'_2(k, r, n)&\coloneqq -\sum_{m=0}^{k-r-1}2^{k-r-m-1}\binom{m+n}{n}G_2\bigl((\{1\}^{r-n-1}, k-r-m+1), m+n\bigr), \\
 H(k, r, n)&\coloneqq F(k,r,n)+S'(k, r, n)+G'(k, r, n).
\end{align*}
Note that $G'(k, r, n)=G'_1(k, r, n)-G'_2(k, r, n)$. 
\end{enumerate}
\end{defn}

We define a $\bQ$-linear isomorphism $\phi$ on $\cI$ by
\begin{align*} 
 \phi(\bk)
 \coloneqq (-1)^{\dep(\bk)} \sum_{\substack{\square\textrm{ is either a comma `,' } \\
 \textrm{ or a plus `+'}}}(\underbrace{1\square1\square\cdots\square1}_{\substack{\textrm{the number of}\\ \textrm{`1' is }k_1}}, \ldots, \underbrace{1\square1\square\cdots\square1}_{\substack{\textrm{the number of}\\ \textrm{`1' is }k_r}}) \in \cI.
\end{align*}
To prove Theorem \ref{main}, the following proposition plays a key role. 
\begin{prop}[Key proposition] \label{key_lem}
 For a positive integer $k$, an odd positive integer $r$ with $1\le r\le k$, and a non-negative integer $n$ with $n\le r-1$, we have
 \begin{align*}
  &H(k,r,n)+\phi(H(k,r,n)) 
  =
  \begin{cases}
   \displaystyle{
   -\binom{k-r+n}{n} \cdot (\{1\}^{k}) 
   }
   \quad & \textrm{if $k$ is even}, \\
   \quad0 & \textrm{if $k$ is odd}.
  \end{cases}
 \end{align*}
\end{prop}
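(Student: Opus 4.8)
The plan is to expand $H(k,r,n)=F(k,r,n)+S'(k,r,n)+G'_1(k,r,n)-G'_2(k,r,n)$ and to compute the coefficient of each index in $H(k,r,n)+\phi(H(k,r,n))$ directly. The starting point is a combinatorial reading of $\phi$. An index of weight $k$ is determined by its set $U\subseteq\{1,\dots,k-1\}$ of comma-positions among the $k-1$ gaps between the $k$ units, and its depth is $|U|+1$; write $\iota(U)$ for this index. Unwinding the definition, $\phi(\iota(U))=(-1)^{|U|+1}\sum_{T\supseteq U}\iota(T)$, the sum running over all supersets $T$ of $U$ in $\{1,\dots,k-1\}$. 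From this two facts follow at once. First, by $\sum_{U\subseteq T\subseteq V}(-1)^{|T|}=(-1)^{|U|}\delta_{U,V}$, the map $\phi$ is an involution, so $H+\phi(H)$ is automatically $\phi$-invariant; this serves as a consistency check since $\phi$ fixes $\{1\}^k=\iota(\{1,\dots,k-1\})$ up to the sign $(-1)^k$, matching the two cases of the right-hand side. Second, $\{1\}^k$ appears in $\phi(\iota(U))$ with coefficient $(-1)^{|U|+1}=(-1)^{\dep(\iota(U))}$ for \emph{every} $U$, so the coefficient of $\{1\}^k$ in $\phi(H)$ is exactly the signed evaluation $\iota(U)\mapsto(-1)^{\dep(\iota(U))}$ applied to $H$. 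This separates the computation into the off-diagonal indices (all $\bl\neq\{1\}^k$, which must cancel) and the single diagonal index $\{1\}^k$.

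Next I would record the action of $\phi$ on the three building blocks. For the complete sums a direct count gives $\phi(S(k,j))=(-1)^{j}\sum_{s=j}^{k}\binom{s-1}{j-1}S(k,s)$, which controls the contribution of $S'(k,r,n)=-\binom{k-r+n}{n}S(k,r-n)$. For $G_1,G_2$ the essential input is a lemma intertwining $\phi$ with the Hoffman dual: since $(\,\cdot\,)^{\vee}$ complements the comma-set, one finds $\phi(w^{\vee})=(-1)^{k-|U|}\bigl(\sum_{V\subseteq U}\iota(V)\bigr)^{\vee}$ on $w=\iota(U)$, so the superset-sum defining $\phi$ becomes a subset-sum on the dual side, which is precisely the kind of merging produced by the operators building $G_1,G_2$. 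The role of $S'$ and $G'$ in the definition of $H$ is exactly that, after applying $\phi$, the off-diagonal contributions coming from $\phi(F(k,r,n))$ are cancelled by those of $\phi(S')$ and $\phi(G')$, leaving only the diagonal term. I expect the hypothesis that $r$ is odd to enter here, through the sign $(-1)^{r}=-1$ in these reindexings, so that the relevant contributions cancel rather than reinforce.

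The crux is the computation of $\phi(F(k,r,n))$, in which the weight $2^{k_r-1}$ must be carried through. Here I would fix a target index $\bl=(l_1,\dots,l_s)$ of weight $k$ and depth $s=r-n$ and compute its coefficient in $F(k,r,n)$ by summing over all ways of refining $\bl$ into $r$ positive parts with exactly $n$ of the new separators a plus, weighted by $2^{a-1}$, where $a$ is the last refined sub-part of $l_s$; this produces sums of the shape $\sum_{a}2^{a-1}\binom{\,\cdot\,}{\,\cdot\,}$ which must be evaluated and then pushed through the superset-sum defining $\phi$. The main obstacle is exactly this interaction: reconciling the geometric weight $2^{k_r-1}$ simultaneously with the superset/subset reindexing produced by $\phi$ and with the Hoffman dual occurring in $G'_2$, and then matching all the resulting binomial coefficients so that every $\bl\neq\{1\}^k$ acquires total coefficient zero. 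The identities needed are Vandermonde-type convolutions together with a telescoping evaluation of $\sum_{a}2^{a-1}\binom{\,\cdot\,}{\,\cdot\,}$, and keeping this bookkeeping consistent across the four $\phi$-images $\phi(F),\phi(S'),\phi(G'_1),\phi(G'_2)$ is where the real work lies.

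Finally, once every off-diagonal index has cancelled, only $\{1\}^k$ can survive, and by the first paragraph its coefficient equals the signed evaluation $\sum_{\bl}c_{\bl}(-1)^{\dep(\bl)}$ of $H(k,r,n)=\sum_{\bl}c_{\bl}\bl$ (the term $\bl=\{1\}^k$ itself does not occur in $H$ for the relevant ranges, since $F$, $S(k,r-n)$, $G_1$ and $G_2$ all produce indices of depth strictly less than $k$). I would compute this signed evaluation block by block: on $F(k,r,n)$ it reduces, via the generating identity $\sum_{\square}t^{\#+}=(1+t)^{r-1}$, to $(-1)^{r-n}\binom{r-1}{n}\sum_{k_r}2^{k_r-1}\binom{k-k_r-1}{r-2}$, while the $S'$ and $G'$ blocks contribute their explicit binomial weights through $\phi(S(k,j))$ and the dual lemma. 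The even/odd dichotomy then emerges from a geometric sum of the form $\sum_{a}2^{a-1}(-1)^{a}$-type cancellation, whose value carries the factor distinguishing the parity of $k$: the total collapses to $0$ when $k$ is odd and to $-\binom{k-r+n}{n}$ when $k$ is even. This yields $H(k,r,n)+\phi(H(k,r,n))=-\binom{k-r+n}{n}\,(\{1\}^{k})$ for even $k$ and $0$ for odd $k$, as claimed, consistently with the fact that $\{1\}^k$ is $\phi$-fixed precisely when $k$ is even.
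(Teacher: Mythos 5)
Your preliminary observations are correct: the description $\phi(\iota(U))=(-1)^{|U|+1}\sum_{T\supseteq U}\iota(T)$, the involution property, the identity $\phi(S(k,j))=(-1)^{j}\sum_{s=j}^{k}\binom{s-1}{j-1}S(k,s)$, and the fact that the coefficient of $(\{1\}^k)$ in $\phi(H)$ equals the signed evaluation $\iota(U)\mapsto(-1)^{\dep(\iota(U))}$ applied to $H$. But what you have written is a plan, not a proof. The entire content of the proposition is the assertion that, after expanding the eight terms $F,S',G'_1,G'_2$ and their four $\phi$-images, every index other than $(\{1\}^k)$ gets total coefficient zero and the diagonal coefficient comes out to $-\frac{(-1)^k+1}{2}\binom{k-r+n}{n}$; you explicitly defer this (``where the real work lies'') and never carry it out. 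No binomial identity is verified, no coefficient is actually matched, and the mechanism by which the hypothesis that $r$ is odd enters is only guessed at (``I expect the hypothesis that $r$ is odd to enter here''). As it stands, the cancellation you need is exactly the statement being proved, so there is a genuine gap at the core of the argument.

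You also missed the structural simplification that makes the proposition tractable, and which shows your proposed bookkeeping is heavier than necessary. The paper never computes $\phi(F)$, $\phi(S')$, or $\phi(G'_1)$ at all: it first proves the identity $F(k,r,n)+S'(k,r,n)+G'_1(k,r,n)=0$ in $\cI$ (Lemma \ref{F+S+G_1}, a telescoping binomial computation involving no $\phi$ and valid for every $r$, odd or not), which kills six of your eight terms at once since $\phi(0)=0$. This leaves $H+\phi(H)=-\bigl(G'_2+\phi(G'_2)\bigr)$, and only this single $\phi$-image is computed (Lemma \ref{G_2}), using precisely your superset description of $\phi$ together with $\binom{m+n}{n}\binom{d+n}{d-m}=\binom{d+n}{n}\binom{d}{m}$ and $\sum_{m}(-2)^{-m}\binom{d}{m}=2^{-d}$; it is here, and only here, that $r$ odd produces the factor $1+(-1)^r=0$ off the diagonal and $\frac{(-1)^k+1}{2}$ on it. Separately, your parenthetical claim that $(\{1\}^k)$ never occurs in $F$ or $S(k,r-n)$ is false in the boundary case $r=k$, $n=0$: there $F(k,k,0)=(\{1\}^k)$ and $S'(k,k,0)=-(\{1\}^k)$, and your assertion survives only because these two terms cancel. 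To turn your outline into a proof you would either have to execute the full four-fold matching you describe, or, better, discover the vanishing of $F+S'+G'_1$, which is the actual key to the proposition.
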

To prove Proposition \ref{key_lem}, we need Lemmas \ref{F}, \ref{F+S+G_1}, and \ref{G_2}. 
\begin{lem}\label{F}
For positive integers $k$ and $r$ with $1\le r\le k$, we have
\begin{align}\label{F(k,r,n)}
 &F(k,r,n) \\
 &=\sum_{\substack{a_{1}+\cdots+a_{r-n}=k\\a_{1},\dots,a_{r-n}\ge1}}
  \biggl\{
   \sum_{i=1}^{a_{r-n}-1} 2^{i-1} \binom{k-r+n-i}{n-1} +2^{a_{r-n}-1} \binom{k-r+n-a_{r-n}+1}{n}
  \biggr\} \nonumber \\
  &\qquad\qquad\qquad \times (a_1,\ldots,a_{r-n}). \nonumber
\end{align}
\end{lem}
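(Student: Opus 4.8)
The plan is to compute, for a fixed target index $(a_1,\ldots,a_{r-n})$ of depth $s\coloneqq r-n$, the coefficient with which it occurs in $F(k,r,n)$, and to show that this coefficient depends on $(a_1,\ldots,a_{r-n})$ only through its last entry $a\coloneqq a_{r-n}$, reproducing exactly the bracketed expression in \eqref{F(k,r,n)}. By definition $F(k,r,n)$ is the $t^n$-part of $F^t(k,r)$, so each contributing monomial comes from a composition $(k_1,\ldots,k_r)$ of $k$ together with a placement of $n$ plus signs and $r-1-n$ commas in the $r-1$ gaps, carrying the weight $2^{k_r-1}$. The $n$ plus signs cut $(k_1,\ldots,k_r)$ into $s$ consecutive blocks with sums $a_1,\ldots,a_s$; if the $j$-th block has $d_j$ parts, then $d_j\ge 1$ and $\sum_{j=1}^{s}(d_j-1)=n$. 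Writing $W_d(a)$ for the sum of $2^{(\text{last part})-1}$ over all compositions of $a$ into $d$ positive parts, the coefficient of $(a_1,\ldots,a_s)$ in $F(k,r,n)$ is
\[
 \sum_{\substack{d_1+\cdots+d_s=r\\ d_1,\ldots,d_s\ge 1}}\Bigl(\prod_{j=1}^{s-1}\binom{a_j-1}{d_j-1}\Bigr)\,W_{d_s}(a).
\]

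First I would sum over the first $s-1$ blocks while keeping the size $d_s$ of the last block fixed. Using the generating function $\sum_{d\ge 1}\binom{N-1}{d-1}x^{d}=x(1+x)^{N-1}$ and $\sum_{j<s}(a_j-1)=(k-a)-(s-1)$, the factor $\prod_{j=1}^{s-1}\binom{a_j-1}{d_j-1}$ summed over $d_1+\cdots+d_{s-1}=r-d_s$ becomes the coefficient of $x^{\,n+1-d_s}$ in $(1+x)^{k-r+n-a+1}$, namely $\binom{k-r+n-a+1}{\,n+1-d_s}$. This is the crucial point: the result is independent of the individual $a_1,\ldots,a_{s-1}$, which is precisely why the coefficient in \eqref{F(k,r,n)} sees $(a_1,\ldots,a_s)$ only through $a$.

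It then remains to handle the last block, which I would do by splitting on the value $i\coloneqq k_r$ of the last original part. When $i=a$ the last block is forced to be a singleton ($d_s=1$), contributing $2^{a-1}\binom{k-r+n-a+1}{n}$, which is the second summand of \eqref{F(k,r,n)}. When $1\le i\le a-1$ the last block has $d_s\ge 2$ parts and ends in $i$; there are $\binom{a-i-1}{d_s-2}$ such compositions of $a$, so this case contributes
\[
 \sum_{i=1}^{a-1}2^{i-1}\sum_{d_s\ge 2}\binom{a-i-1}{d_s-2}\binom{k-r+n-a+1}{\,n+1-d_s}.
\]
A Vandermonde convolution collapses the inner sum to $\binom{k-r+n-i}{n-1}$, giving the first summand of \eqref{F(k,r,n)} and completing the identification.

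The main obstacle is the first identity, i.e.\ showing that the sum over the first $s-1$ blocks is independent of the individual $a_j$ and equals a single binomial coefficient; this ``factorization'' is what makes the whole computation tractable. Once it is established, the Vandermonde step for the last block and the bookkeeping of the weight $2^{k_r-1}$ are routine, so I expect no further difficulty.
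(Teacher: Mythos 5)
Your proposal is correct: the block-size bookkeeping, the generating-function factorization over the first $s-1$ blocks, and the Vandermonde collapse for the last block all check out (including the edge case $s=1$, where the empty product convention still forces $d_s=r$), and the two cases $i=a$ and $i<a$ reproduce exactly the two summands of \eqref{F(k,r,n)}. The overall strategy coincides with the paper's --- fix the target index, extract its coefficient, and split according to the value $i=k_r$ of the last original part --- but the counting mechanism differs. The paper counts all refinements of $(a_1,\ldots,a_{r-n})$ with $k_r=i$ in a single stars-and-bars step, via a picture of $k$ circles with block-separating lines, a dashed line at position $i$, and a choice of $n-1$ (resp.\ $n$) ``dashed arrows'' among the $k-i-d$ (resp.\ $k-i-d+1$) free gaps; this yields $\binom{k-r+n-i}{n-1}$ and $\binom{k-r+n-i+1}{n}$ directly, with no convolution identity needed, and the independence of the count from $a_1,\ldots,a_{r-n-1}$ is automatic. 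Your route instead localizes the count block by block, which forces you to re-assemble the pieces: the product $\prod_{j<s}\binom{a_j-1}{d_j-1}$ is summed by the generating function $x(1+x)^{N-1}$, and the last block is folded in by Vandermonde. What you gain is a purely algebraic verification using only standard identities, with the key structural fact (the coefficient depends on the index only through $a_{r-n}$) isolated as an explicit intermediate statement; what the paper's argument gains is brevity, since the global count makes both binomial coefficients appear in one step rather than as the output of a convolution.
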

\begin{proof}
We note that the depths of all the indices appearing in \eqref{F(k,r,n)} is $r-n$, and let $d$ be $r-n$.
Fix an index $(a_1, \ldots, a_d)$ appearing in $F(k, r, n)$.
 
Choose $i\; (1\le i \le a_d)$. For $i$, we can count the number of the indices $(a_1, \ldots, a_d)$ appearing in $F(k, r, n)$ by the following steps.
\begin{enumerate}
\item Draw $k$ circles. 
\item Draw lines to divide $k$ circles into $d$ parts corresponding to $(a_1, \ldots, a_d)$. 
\item Draw a dashed line between the $i$-th and the $(i+1)$-th circles from the right.
\item If $1\le i <a_d$, proceed (a). If $i=a_d$, proceed (a').

\begin{description}
\item[(a)]  Draw $(k-1)-i-(d-1)=k-i-d$ arrows (dashed or non-dashed arrows) between the circles without lines and choose $(r-2)-(d-1)=r-d-1$ arrows (dashed arrows) from them. 
\item[(b)] Obtain the index $(k_1, \ldots, k_r)$ with $k_r=i$ by counting the number of the circles between the adjacent two of the (dashed and non-dashed) lines and dashed arrows. 
\item[(c)] The number of indices $(k_1, \ldots, k_r)$ with $k_r=i$ obtained by the above (i) to (iv), (a), and (b) is $\binom{k-r+n-i}{n-1}$. We end all steps in the case of $1\le i <a_d$.

\item[(a')]  Draw $k-i-(d-1)=k-i-d+1$ arrows (dashed or non-dashed arrows) between the circles without lines and choose $(r-1)-(d-1)=r-d$ arrows (dashed arrows) from them. Note that in this case, the rightmost line and the dashed line corresponding to $i$ coincide. 
\item[(b')] Obtain the index $(k_1, \ldots, k_r)$ with $k_r=i$ by counting the number of the circles between the adjacent two of the (dashed and non-dashed) lines and dashed arrows.
\item[(c')] The number of indices $(k_1, \ldots, k_r)$ with $k_r=i$ obtained by the above (i) to (iv), (a'), and (b') is $\binom{k-r+n-i+1}{n}$. We end all steps in the case of $i=a_d$.
\end{description}

\end{enumerate}

For example, consider the number of $(5, 3, 4, 3)$ appearing in $F^t(15, 7)$ and $i=2$. From the above steps (i) to (iv) and (a) to (c), we obtain the following figure.
\begin{align*}
\begin{tikzpicture}
\draw(0,0) circle (0.4cm);
\draw(0.8,0) circle (0.4cm);
\draw(1.6,0) circle (0.4cm);
\draw(2.4,0) circle (0.4cm);
\draw(3.2,0) circle (0.4cm);
\draw(4.0,0) circle (0.4cm);
\draw(4.8,0) circle (0.4cm);
\draw(5.6,0) circle (0.4cm);
\draw(6.4,0) circle (0.4cm);
\draw(7.2,0) circle (0.4cm);
\draw(8.0,0) circle (0.4cm);
\draw(8.8,0) circle (0.4cm);
\draw(9.6,0) circle (0.4cm);
\draw(10.4,0) circle (0.4cm);
\draw(11.2,0) circle (0.4cm);
\draw[thick](-0.4,1.4)--(-0.4,-0.6);
\draw[thick](3.6,1.4)--(3.6,-0.6);
\draw[thick](6.0,1.4)--(6.0,-0.6);
\draw[thick](9.2,1.4)--(9.2,-0.6);
\draw[thick](11.6,1.4)--(11.6,-0.6);
\draw[->](0.4,-1.4)--(0.4,-0.4);
\draw[->](1.2,-1.4)--(1.2,-0.4);
\draw[->](2.8,-1.4)--(2.8,-0.4);
\draw[->](4.4,-1.4)--(4.4,-0.4);
\draw[->](6.8,-1.4)--(6.8,-0.4);
\draw[->](7.6,-1.4)--(7.6,-0.4);
\draw[->](8.4,-1.4)--(8.4,-0.4);
\draw[thick,dashed][->](2.0,-2.0)--(2.0,-0.4);
\draw[thick,dashed][->](5.2,-2.0)--(5.2,-0.4);
\draw[thick,dashed](10.0,0.6)--(10.0,-2.0);
\draw(-0.3,1.0)--(3.5,1.0); 
\draw(1.6,1.4) node{$a_1$}; 
\draw(3.7,1.0)--(5.9,1.0); 
\draw(4.8, 1.4) node{$a_2$};  
\draw(6.1,1.0)--(9.1,1.0); 
\draw(7.6, 1.4) node{$a_3$}; 
\draw(9.3,1.0)--(11.5,1.0); 
\draw(10.4, 1.4) node{$a_4$};  
\draw(-0.3,-1.6)--(1.9,-1.6); 
\draw(0.8,-2.0) node{$k_1$}; 
\draw(2.1,-1.6)--(3.5,-1.6); 
\draw(2.8,-2.0) node{$k_2$}; 
\draw(3.7,-1.6)--(5.1,-1.6); 
\draw(4.4,-2.0) node{$k_3$}; 
\draw(5.3,-1.6)--(5.9,-1.6); 
\draw(5.6,-2.0) node{$k_4$}; 
\draw(6.1,-1.6)--(9.1,-1.6); 
\draw(7.6,-2.0) node{$k_5$}; 
\draw(9.3,-1.6)--(9.9,-1.6); 
\draw(9.6,-2.0) node{$k_6$}; 
\draw(10.1,-1.6)--(11.5,-1.6); 
\draw(10.8,-2.0) node{$k_7=i$}; 
\end{tikzpicture}
\end{align*}

Thus we see that the number of $(a_1, \ldots, a_d)$ appearing in $F^t(k, r)$ coincides with
\begin{align}\label{first count}
\sum_{i=1}^{a_{r-n}-1}2^{i-1}\binom{k-r+n-i}{n-1}. 
\end{align}

On the other hand, consider the number of $(5, 3, 4, 3)$ appearing in $F^t(15, 7)$ and $i=3$
 From the above steps (i) to (iv) and (a') to (c'), we obtain the following figure.
 
\begin{align*}
\begin{tikzpicture}
\draw(0,0) circle (0.4cm);
\draw(0.8,0) circle (0.4cm);
\draw(1.6,0) circle (0.4cm);
\draw(2.4,0) circle (0.4cm);
\draw(3.2,0) circle (0.4cm);
\draw(4.0,0) circle (0.4cm);
\draw(4.8,0) circle (0.4cm);
\draw(5.6,0) circle (0.4cm);
\draw(6.4,0) circle (0.4cm);
\draw(7.2,0) circle (0.4cm);
\draw(8.0,0) circle (0.4cm);
\draw(8.8,0) circle (0.4cm);
\draw(9.6,0) circle (0.4cm);
\draw(10.4,0) circle (0.4cm);
\draw(11.2,0) circle (0.4cm);
\draw[thick](-0.4,1.4)--(-0.4,-0.6);
\draw[thick](3.6,1.4)--(3.6,-0.6);
\draw[thick](6.0,1.4)--(6.0,-0.6);
\draw[thick](9.2,1.4)--(9.2,-0.6);
\draw[thick](11.6,1.4)--(11.6,-0.6);
\draw[->](0.4,-1.4)--(0.4,-0.4);
\draw[->](2.0,-1.4)--(2.0,-0.4);
\draw[->](2.8,-1.4)--(2.8,-0.4);
\draw[->](4.4,-1.4)--(4.4,-0.4);
\draw[->](6.8,-1.4)--(6.8,-0.4);
\draw[->](8.4,-1.4)--(8.4,-0.4);
\draw[thick,dashed][->](1.2,-2.0)--(1.2,-0.4);
\draw[thick,dashed][->](5.2,-2.0)--(5.2,-0.4);
\draw[thick,dashed][->](7.6,-2.0)--(7.6,-0.4);
\draw[thick,dashed](9.2,0.6)--(9.2,-2.0);
\draw(-0.3,1.0)--(3.5,1.0); 
\draw(1.6,1.4) node{$a_1$}; 
\draw(3.7,1.0)--(5.9,1.0); 
\draw(4.8, 1.4) node{$a_2$}; 
\draw(6.1,1.0)--(9.1,1.0);
\draw(7.6, 1.4) node{$a_3$};  
\draw(9.3,1.0)--(11.5,1.0); 
\draw(10.4, 1.4) node{$a_4$};
\draw(-0.3,-1.6)--(1.1,-1.6); 
\draw(0.4,-2.0) node{$k_1$}; 
\draw(1.3,-1.6)--(3.5,-1.6); 
\draw(2.4,-2.0) node{$k_2$}; 
\draw(3.7,-1.6)--(5.1,-1.6); 
\draw(4.4,-2.0) node{$k_3$}; 
\draw(5.3,-1.6)--(5.9,-1.6); 
\draw(5.6,-2.0) node{$k_4$}; 
\draw(6.1,-1.6)--(7.5,-1.6); 
\draw(6.8,-2.0) node{$k_5$}; 
\draw(7.7,-1.6)--(9.1,-1.6); 
\draw(8.4,-2.0) node{$k_6$}; 
\draw(9.3,-1.6)--(11.5,-1.6); 
\draw(10.4,-2.0) node{$k_7=i$}; 
\end{tikzpicture}
\end{align*}
Thus we see that the number of $(a_1, \ldots, a_{r-n})$ with $i=a_{r-n}$ in $F^t(k, r)$ coincides with 
\begin{align}\label{second count}
2^{a_{r-n}-1}\binom{k-r+n-a_{r-n}+1}{n}.
\end{align}
From \eqref{first count} and \eqref{second count}, we obtain the desired formula.
\end{proof}
\begin{lem}\label{F+S+G_1}
 For positive integer $k$, a positive integer $r$ with $1\le r\le k$, and a non-negative integer $n$ with $n\le r-1$, we have
 \begin{align*}
  F(k, r, n)+S'(k, r, n)+G'_1(k, r, n)=0.
 \end{align*}
\end{lem}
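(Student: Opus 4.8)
The plan is to verify the identity coefficientwise. Writing $d \coloneqq r-n$, I first observe that each of $F(k,r,n)$, $S'(k,r,n)$ and $G'_1(k,r,n)$ is a $\bQ$-linear combination of indices of depth $d$ and weight $k$: for $F$ this is exactly Lemma~\ref{F}, for $S'$ it is immediate from the definition of $S(k,r-n)$, and for $G'_1$ it holds because each summand $(\{1\}^{d-1}, k-r-m+1)\oplus\be$ has depth $d$ and weight $k$. Hence it suffices to fix an index $\ba=(a_1,\ldots,a_d)$ with $a_1+\cdots+a_d=k$ and show that its total coefficient is zero. A pleasant feature, which I would highlight at the outset, is that the coefficient of $\ba$ in every one of the three terms depends only on $a\coloneqq a_d$, so the whole statement collapses to a one-parameter binomial identity.

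I would then read off the coefficients. By Lemma~\ref{F} the coefficient of $\ba$ in $F(k,r,n)$ is
\[
 c_F=\sum_{i=1}^{a-1}2^{i-1}\binom{k-r+n-i}{n-1}+2^{a-1}\binom{k-r+n-a+1}{n},
\]
and the coefficient in $S'(k,r,n)$ is $c_{S'}=-\binom{k-r+n}{n}$. For $G'_1$ the key combinatorial point is that, for each fixed $m$, there is exactly one sequence $\be$ of non-negative integers with $(\{1\}^{d-1},k-r-m+1)\oplus\be=\ba$, namely $e_i=a_i-1$ for $i<d$ (so $e_i\ge0$ automatically since $a_i\ge1$) and $e_d=a_d-(k-r-m+1)$; its weight is automatically $m+n$, so the sole constraint is $e_d\ge0$, i.e.\ $m\ge k-r-a+1$. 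Summing the admissible $m$ in $0\le m\le k-r-1$ and substituting $j=k-r-m$ yields
\[
 c_{G'_1}=-\sum_{j=1}^{\min(k-r,\,a-1)}2^{j-1}\binom{k-r+n-j}{n}.
\]

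It remains to prove the identity $c_F+c_{S'}+c_{G'_1}=0$. Setting $N\coloneqq k-r+n$, $M\coloneqq k-r=N-n$ and $B_i\coloneqq\binom{N-i+1}{n}$, I would rewrite the first sum of $c_F$ using Pascal's rule $\binom{N-i}{n-1}=B_i-B_{i+1}$ and split into two cases. If $a-1\le M$, the sums in $c_F$ and $c_{G'_1}$ share the range $1\le j\le a-1$, so their combination is the telescoping sum $\sum_{i=1}^{a-1}2^{i-1}(B_i-2B_{i+1})=B_1-2^{a-1}B_a$, which cancels exactly against $c_{S'}$ together with the second term $2^{a-1}\binom{N-a+1}{n}=2^{a-1}B_a$ of $c_F$. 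If $a-1>M$, then $\binom{N-a+1}{n}=0$ and every summand of the first sum of $c_F$ with $i>M+1$ vanishes, so that sum effectively runs to $i=M+1$ while $c_{G'_1}$ runs to $j=M$; the same telescoping, after cancelling $c_{S'}=-B_1$, leaves only the boundary term $-2^{M}B_{M+2}=-2^{M}\binom{n-1}{n}=0$.

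The step requiring the most care---and the main obstacle---is the treatment of $G'_1$: one must check that the weight of $\be$ is automatically $m+n$ (so that only $e_d\ge0$ constrains the range of $m$), pin down the upper limit $\min(k-r,a-1)$ correctly, and verify in the case $a-1>M$ that the ``overshooting'' binomial coefficients genuinely vanish, leaving no telescoping residue. I would also check the degenerate case $n=0$ separately, where $\binom{N-i}{n-1}=0$ kills the first sum of $c_F$ and the identity reduces to the geometric-series cancellation $2^{a-1}-1=\sum_{j=1}^{a-1}2^{j-1}$.
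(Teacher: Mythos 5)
Your proposal is correct and takes essentially the same approach as the paper: both arguments verify the identity coefficientwise (noting that the coefficient of an index depends only on its last entry $a$), arrive at the same three coefficient formulas for $F$, $S'$, and $G_1'$, and conclude with the identical Pascal's-rule telescoping. The only difference is bookkeeping: where you split into the cases $a-1\le k-r$ and $a-1>k-r$, the paper extends the $G_1'$-sum to the full range $1\le j\le a-1$ by observing that $\binom{k-r+n-j}{n}=0$ for $j>k-r$, so that a single unified telescoping computation suffices.
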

\begin{proof}
It is easy to see that
\begin{align}\label{eq:calc of S'}
 S'(k,r,n)
 &=-\binom{k-r+n}{n} 
  \sum_{\substack{a_{1}+\cdots+a_{r-n}=k\\a_1,\dots,a_{r-n}\ge1}}
  (a_1,\ldots,a_{r-n}) \\
 &=-\binom{k-r+n}{n} 
  \sum_{a_{r-n}=1}^{k-r+n+1}
  \sum_{\substack{a_{1}+\cdots+a_{r-n-1}=k-a_{r-n}\\a_1,\dots,a_{r-n-1}\ge1}}
  (a_1,\ldots,a_{r-n}). \nonumber
\end{align}
 By Lemma \ref{F} and \eqref{eq:calc of S'}, 
 we have
\begin{align*}
 &F(k,r,n)+S'(k,r,n) \\
 &=\sum_{a_{r-n}=1}^{k-r+n+1}
  \sum_{\substack{a_{1}+\cdots+a_{r-n-1}=k-a_{r-n}\\a_{1},\dots,a_{r-n-1}\ge1}} \\ 
  &\quad 
  \biggl(
   \sum_{i=1}^{a_{r-n}-1} 2^{i-1} \binom{k-r+n-i}{n-1}
   +2^{a_{r-n}-1} \binom{k-r+n-a_{r-n}+1}{n}
   -\binom{k-r+n}{n} 
  \biggr) \\ 
  &\quad \times (a_1,\ldots,a_{r-n})\\ 
  &=\sum_{a_{r-n}=2}^{k-r+n+1}
  \sum_{\substack{a_{1}+\cdots+a_{r-n-1}=k-a_{r-n}\\a_{1},\dots,a_{r-n-1}\ge1}} \\ 
  &\quad 
  \biggl(
   \sum_{i=1}^{a_{r-n}-1} 2^{i-1} \binom{k-r+n-i}{n-1}
   +2^{a_{r-n}-1} \binom{k-r+n-a_{r-n}+1}{n}
   -\binom{k-r+n}{n} 
  \biggr) \\ 
  &\quad \times (a_1,\ldots,a_{r-n}). 
\end{align*}
On the other hand, it is easy to see that
\begin{align*}
 G_1((\{1\}^{r-n-1},k-r-m+1),m+n) 
 =\sum_{\substack{a_{1}+\cdots+a_{r-n}=k\\a_{1},\dots,a_{r-n-1}\ge1\\a_{r-n}\ge k-r-m+1}}
  (a_1,\ldots,a_{r-n}).
\end{align*}
Then we have
\begin{align*}
 &G'_1(k,r,n) \\
 &=-\sum_{m=0}^{k-r-1} 2^{k-r-m-1} \binom{m+n}{n} 
  \sum_{\substack{a_{1}+\cdots+a_{r-n}=k\\a_{1},\dots,a_{r-n-1}\ge1\\a_{r-n}\ge k-r-m+1}}
  (a_1,\ldots,a_{r-n}) \\  
 &=-\sum_{m=0}^{k-r-1} 2^{k-r-m-1} \binom{m+n}{n} 
  \sum_{a_{r-n}=k-r-m+1}^{k-r+n+1}
  \sum_{\substack{a_{1}+\cdots+a_{r-n-1}=k-a_{r-n}\\a_{1},\dots,a_{r-n-1}\ge1}}
  (a_1,\ldots,a_{r-n}) \\ 
 &=-\sum_{m=0}^{k-r-1} 2^{m} \binom{k-r-m+n-1}{n} 
  \sum_{a_{r-n}=m+2}^{k-r+n+1}
  \sum_{\substack{a_{1}+\cdots+a_{r-n-1}=k-a_{r-n}\\a_{1},\dots,a_{r-n-1}\ge1}}
  (a_1,\ldots,a_{r-n}) \\ 
 &=\biggl( 
  -\sum_{a_{r-n}=2}^{k-r+n+1} \sum_{m=0}^{a_{r-n}-2} +\sum_{a_{r-n}=k-r+1}^{k-r+n+1} \sum_{m=k-r}^{a_{r-n}-2}
  \biggr) \,
  2^{m} \binom{k-r-m+n-1}{n} \\  
 &\qquad \times \sum_{\substack{a_{1}+\cdots+a_{r-n-1}=k-a_{r-n}\\a_{1},\dots,a_{r-n-1}\ge1}}
  (a_1,\ldots,a_{r-n}).
\end{align*}
Note that by the definition of binomial coefficients, we have
\begin{align*}
\binom{k-r-m+n-1}{n}=0
\end{align*}
for $k-r \le m$. Thus we have
 \begin{align*}
 G'_1(k, r, n)=-\sum_{a_{r-n}=2}^{k-r+n+1}\sum_{m=0}^{a_{r-n}-2}\sum_{\substack{a_{1}+\cdots+a_{r-n-1}=k-a_{r-n}\\a_{1},\dots,a_{r-n-1}\ge1}}
 2^{m} 
 \binom{k-r-m+n-1}{n}
 (a_1,\ldots,a_{r-n}).
 \end{align*}
From this, if we set $a\coloneqq a_{r-n}$, we obtain
 \begin{align}\label{first calc of FSG}
 &F(k, r, n)+S'(k, r, n)+G'_1(k, r, n) \\
 &=\sum_{a=2}^{k-r+n+1}\sum_{\substack{a_1+\cdots+a_{r-n-1}=k-a \\ a_1, \ldots, a_{r-n-1} \ge1}}\left\{\sum_{m=1}^{a-1}2^{m-1}\binom{k-r+n-m}{n-1}+2^{a-1}\binom{k-r+n-a+1}{n}\right.\nonumber \\
 &\quad \left. -\binom{k-r+n}{n}-\sum_{m=0}^{a-2}2^m\binom{k-r-m+n-1}{n} \right\}(a_1, \ldots, a_{r-n-1}, a). \nonumber
 \end{align}
Here we note that
\begin{align}\label{second calc of FSG}
&\sum_{m=1}^{a-1}2^{m-1}\binom{k-r+n-m}{n-1}-\sum_{m=0}^{a-2}2^m\binom{k-r+n-m-1}{n} \\
&=\sum_{m=0}^{a-2}2^m\left\{\binom{k-r+n-m-1}{n-1}-\binom{k-r+n-m-1}{n}\right\} \nonumber\\
&=\sum_{m=0}^{a-2}2^m\left\{\binom{k-r+n-m}{n}-2\binom{k-r+n-m-1}{n}\right\} \nonumber\\
&=\sum_{m=0}^{a-2}\left\{2^m\binom{k-r+n-m}{n}-2^{m+1}\binom{k-r+n-(m+1)}{n}\right\} \nonumber \\
&=\binom{k-r+n}{n}-2^{a-1}\binom{k-r+n-a+1}{n}. \nonumber
\end{align}
From \eqref{first calc of FSG} and \eqref{second calc of FSG}, we finish the proof of this lemma.
\end{proof}
\begin{lem}\label{G_2}
 For a positive integer $k$, an odd positive integer $r$ with $1\le r\le k$, and a non-negative integer $n$ with $n\le r-1$, we have
 \begin{align*} 
  G'_2(k, r, n)+\phi\bigl(G'_2(k, r, n)\bigr)=
  \begin{cases}
   \displaystyle\binom{k-r+n}{n}\cdot\bigl(\{1\}^k\bigr) & \text{if $k$ is even}, \\
   0 & \text{if $k$ is odd}.
  \end{cases}
 \end{align*}
\end{lem}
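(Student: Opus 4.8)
The plan is to extract two structural facts about the involution-like map $\phi$ and then feed them into the explicit shape of the Hoffman dual occurring in $G_2$. I would first set up the convenient bookkeeping in which an index of weight $k$ is identified with its set of ``comma positions'' inside $\{1,\dots,k-1\}$: in this picture the Hoffman dual $(\cdot)^\vee$ is complementation of the comma set, while by its very definition $\phi$ is, up to the sign $(-1)^{\dep}$, the sum over all \emph{refinements} (splittings of parts), i.e.\ the zeta transform of the refinement Boolean lattice. A one-line M\"obius computation on this lattice then gives $\phi^2=\mathrm{id}$, and in particular, since $(\{1\}^k)$ is maximally refined, $\phi(\{1\}^k)=(-1)^k(\{1\}^k)$. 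This eigenvalue is precisely what will produce the parity dichotomy in the statement.

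The key identity I would isolate next is the commutation rule between $\phi$ and $(\cdot)^\vee$: for any index $\boldsymbol{b}$ of weight $k$,
\[
 \phi(\boldsymbol{b}^\vee)=(-1)^{k-\dep(\boldsymbol{b})+1}\Bigl(\sum_{\boldsymbol{c}\preceq\boldsymbol{b}}\boldsymbol{c}\Bigr)^{\!\vee},
\]
where $\boldsymbol{c}\preceq\boldsymbol{b}$ runs over all \emph{coarsenings} of $\boldsymbol{b}$ (indices obtained by merging consecutive parts). In the comma-set picture this is immediate: conjugating the refinement-sum by complementation turns it into a coarsening-sum, and tracking $|\,\cdot\,|+1=\dep$ through the complement yields the sign $(-1)^{k-\dep(\boldsymbol{b})+1}$.

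With these in hand I would compute both summands explicitly. A direct check gives $(\{1\}^{r-n-1},k-r-m+1)^\vee=(r-n,\{1\}^{k-r-m})$, so that
\[
 G_2\bigl((\{1\}^{r-n-1},k-r-m+1),m+n\bigr)=\sum_{\boldsymbol{b}}\boldsymbol{b}^\vee,
\]
the sum ranging over compositions $\boldsymbol{b}$ of $k$ of depth $k-r-m+1$ whose first part is $\ge r-n$ and whose remaining parts are $\ge1$. Applying the commutation rule to each $\boldsymbol{b}^\vee$, and using that $r$ is odd so the sign $(-1)^{r+m}$ collapses to $-(-1)^m$, produces a closed form for $\phi(G'_2)$. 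Adding $G'_2$ and $\phi(G'_2)$ and stripping off the common outer Hoffman dual (which is injective) reduces the lemma to the single identity in $\cI$
\[
 \sum_{m}2^{k-r-m-1}\binom{m+n}{n}\sum_{\boldsymbol{b}}\Bigl((-1)^m\!\!\sum_{\boldsymbol{c}\preceq\boldsymbol{b}}\boldsymbol{c}-\boldsymbol{b}\Bigr)
 =\begin{cases}\binom{k-r+n}{n}\,(k)&k\text{ even},\\[2pt]0&k\text{ odd},\end{cases}
\]
with $\boldsymbol{b}$ subject to the same depth and first-part constraints. I would finish by comparing coefficients of each composition of $k$ on both sides.

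The main obstacle is exactly this last combinatorial identity. For a fixed composition $\ba$ of $k$ one must collect, weighted by $(-1)^m\,2^{k-r-m-1}\binom{m+n}{n}$, both the diagonal contribution $\boldsymbol{b}=\ba$ and the contributions of all refinements $\boldsymbol{b}$ of $\ba$ meeting the depth and first-part conditions; the depth condition pins $m$ to $\dep(\boldsymbol{b})$, and the constraint ``first part $\ge r-n$'' must be propagated through the merges (note that coarsening only increases the first part, so the condition is stable). I expect the coefficient of every $\ba\neq(k)$ to telescope to zero and the coefficient of $(k)$ to reduce, by the very binomial manipulations already used in Lemma \ref{F+S+G_1} (in particular $\binom{N}{n-1}-\binom{N}{n}=\binom{N+1}{n}-2\binom{N}{n}$ together with the ensuing $2^m$-telescoping), to $\binom{k-r+n}{n}\cdot\tfrac{1+(-1)^k}{2}$, which is precisely the asserted right-hand side.
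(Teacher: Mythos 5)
Your proposal is correct, and it is in essence the paper's own argument transported to the other side of the Hoffman dual. The paper never introduces your commutation rule $\phi(\boldsymbol{b}^\vee)=(-1)^{k-\dep(\boldsymbol{b})+1}\bigl(\sum_{\boldsymbol{c}\preceq\boldsymbol{b}}\boldsymbol{c}\bigr)^{\vee}$; instead it writes $G'_2(k,r,n)$ explicitly as $-\sum_{m}2^{k-r-m-1}\binom{m+n}{n}\sum(\{1\}^{r-n-1},b_1,\ldots,b_{m+n+1})$ and counts, for each fixed index $(\{1\}^{r-n-1},b_1,\ldots,b_{d+n+1})$, its coefficient in $G'_2+\phi(G'_2)$ directly: the term $G'_2$ contributes only at $m=d$, while $\phi(G'_2)$ contributes from the coarsenings $(\{1\}^{r-n-1},b_1\square\cdots\square b_{d+n+1})$ with $d-m$ pluses, in $\binom{d+n}{d-m}$ ways and with sign $(-1)^{r+m}$; then $\binom{m+n}{n}\binom{d+n}{d-m}=\binom{d+n}{n}\binom{d}{m}$ and the binomial theorem give $-2^{k-r-d-1}\binom{d+n}{n}(1+(-1)^r)=0$, with the index $(\{1\}^k)$ treated separately to produce $\frac{(-1)^k+1}{2}\binom{k-r+n}{n}$. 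Your refinement/coarsening count of constrained compositions $\boldsymbol{b}$ refining a fixed $\ba$ is exactly this computation conjugated by the involution $\vee$ (your $\binom{k-r+n-a}{k-r-m-a}$ is the dual of the paper's $\binom{d+n}{d-m}$), so what your route buys is mainly conceptual: the comma-set picture makes the interplay of $\phi$, $\vee$, refinement, and coarsening transparent, at the cost of an extra lemma the paper avoids. Two small points to tighten: (1) your final identity is stated as an expectation rather than verified — it does hold, but the identities needed are precisely the trinomial revision $\binom{m+n}{n}\binom{N+n}{N-m}=\binom{N+n}{n}\binom{N}{m}$ and the binomial theorem $\sum_{m=0}^{N}(-1)^m2^{N-m}\binom{N}{m}=1$ (i.e., the steps of the paper's own proof of this lemma), not the telescoping identity from Lemma \ref{F+S+G_1} that you cite; carrying them out gives coefficient $2^{a-1}\binom{k-r-a+n}{n}-2^{a-1}\binom{k-r-a+n}{n}=0$ for every $\ba\ne(k)$ with first part $\ge r-n$ (and $0$ trivially otherwise), and $\binom{k-r+n}{n}\frac{1+(-1)^k}{2}$ for $\ba=(k)$, as you predicted. (2) The constraint bookkeeping needs both directions: not only is the condition ``first part $\ge r-n$'' stable under coarsening (so indices violating it never appear), but when counting refinements $\boldsymbol{b}$ of a fixed admissible $\ba$ one must count only those refinements that do not split the first $r-n$ units, which is what yields the binomial coefficient above; your sketch gestures at this but does not say it.
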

\begin{proof}
Since
\begin{align*}
 G_2((\{1\}^{r-n-1},k-r-m+1),m+n) 
 &=(G_1((r-n,\{1\}^{k-r-m}),m+n))^\vee \\
 &=\sum_{\substack{b_{1}+\cdots+b_{m+n+1}=k-r+n+1\\b_{1},\dots,b_{m+n+1}\ge1}}
  (\{1\}^{r-n-1},b_1,\ldots,b_{m+n+1}),
\end{align*}
we have
\begin{align*}
G'_2(k, r, n)=-\sum_{m=0}^{k-r-1}2^{k-r-m-1}\binom{m+n}{n}\sum_{\substack{b_1+\cdots+b_{m+n+1}=k-r+n+1 \\ b_1, \ldots, b_{m+n+1}\ge1}}(\{1\}^{r-n-1}, b_1, \ldots, b_{m+n+1}).
\end{align*}
Fix a positive integer $d$ with $1\le d \le k-r-1$ and an index $(b_1, \ldots, b_{d+n+1})$ with weight $k-r+n+1$. We count the number of the index $(\{1\}^{r-n-1}, b_1, \ldots, b_{d+n+1})$ appearing in $G'_2(k, r, n)+\phi(G'_2(k, r, n))$. 

First, it is easy to see that the number of the index $(\{1\}^{r-n-1}, b_1, \ldots, b_{d+n+1})$ appearing in $G'_2(k, r, n)$ is equal to 
 \begin{align*}
 -2^{k-r-d-1}\binom{d+n}{n}
 \end{align*}
since it appears in $G'_2(k, r, n)$ if and only if $m=d$ holds.

Next, we count the number of the index $(\{1\}^{r-n-1}, b_1, \ldots, b_{d+n+1})$ appearing in $\phi(G'_2(k, r, n))$. By the definition of $\phi$, the index $(\{1\}^{r-n-1}, b_1, \ldots, b_{m+n+1})$ in $\phi(G'_2(k,r,n))$ is obtained from the index of the form $(\{1\}^{r-n-1}, b_1\square \cdots \square b_{d+n+1})$ in $G'_2(k,r,n)$ where $(d-m)$ squares are filled with pluses (`+') and the others with commas (`,'), and the number of this way is $\binom{d+n}{d-m}$. Thus, the number of $(\{1\}^{r-n-1}, b_1, \ldots, b_{d+n+1})$ in $\phi(G'_2(k, r, n))$ is equal to 
 \begin{align*}
 -\sum_{m=0}^d(-1)^{r+m}2^{k-r-m-1}\binom{m+n}{n}\binom{d+n}{d-m}.
 \end{align*}

Then we see that the number of such index $(\{1\}^{r-n-1}, b_1, \ldots, b_{m+n+1})$ in $G'_2(k, r, n)+\phi(G'_2(k, r, n))$ coincides with 
\begin{align*}
&-2^{k-r-d-1}\binom{d+n}{n}-\sum_{m=0}^d (-1)^{r+m}2^{k-r-m-1}\binom{m+n}{n}\binom{d+n}{d-m}.
\end{align*}
Note that since 
\begin{align*}
\binom{m+n}{n}\binom{d+n}{d-m}=\binom{d+n}{n}\binom{d}{m}
\end{align*}
and 
\begin{align*}
\sum_{m=0}^d(-2)^{-m}\binom{d}{m}=\left(1-\frac{1}{2}\right)^d=2^{-d}
\end{align*}
by the binomial theorem, we have
\begin{align*}
&-2^{k-r-d-1}\binom{d+n}{n}-\sum_{m=0}^d (-1)^{r+m}2^{k-r-m-1}\binom{m+n}{n}\binom{d+n}{d-m}\\
&=-2^{k-r-d-1}\binom{d+n}{n}-(-1)^r2^{k-r-1}\binom{d+n}{n}\sum_{m=0}^d(-2)^{-m}\binom{d}{m}\\
&=-2^{k-r-d-1}\binom{d+n}{n}\bigl(1+(-1)^r\bigr).
\end{align*}
Since $r$ is odd, the above sum is 0. 
Thus the index $(\{1\}^{r-n-1}, b_1, \ldots, b_{d+n+1})$ of weight $k$ does not appear in $G'_2(k, r, n)+\phi(G'_2(k, r, n))$.

Similarly, we see that the number of the index $(\{1\}^k)$ in $G'_2(k, r, n)+\phi(G'_2(k, r, n))$ coincides with 
\begin{align*}
 &-\sum_{m=0}^{k-r-1} (-1)^{r+m} 2^{k-r-m-1}   
 \binom{m+n}{n} \binom{k-r+n}{k-r-m}\\
 &=-\frac{(-1)^{k+1}+(-1)^r}{2} \binom{k-r+n}{k-r}= \frac{(-1)^k+1}{2}\binom{k-r+n}{n},
\end{align*}
which comes from that $r$ is odd. Thus we obtain the desired formula.
\end{proof}
\begin{proof}[Proof of Proposition \ref{key_lem}]
 By Lemma \ref{F}, Lemma \ref{F+S+G_1} and Lemma \ref{G_2}, we have
 \begin{align*}
   &H(k, r, n)+\phi(H(k, r, n))\\
 &=F(k, r, n)+S'(k, r, n)+G'_1(k, r, n)\\
 &\quad +\phi\bigl(F(k, r, n)+S'(k, r, n)+G'_1(k, r, n)\bigr)+G'_2(k, r, n)+\phi(G'_2(k, r, n))\\
 &=\frac{(-1)^k+1}{2}\binom{k-r+n}{n}\bigl(\{1\}^k\bigr),
 \end{align*}
which is the desired formula.
\end{proof}
\begin{proof}[Proof of Theorem \ref{main}]
 By the duality relation for $\zeta_{\cF}(\bk)$ (\cite[Theorem 4.7]{Hof15}, \cite[Corollarie 1.12]{Jar14}), we have
 \[
  \zeta_{\mathcal{F}} (H(k,r,n)
  =\zeta_{\mathcal{F}} \bigl(\phi(H(k,r,n))\bigr).
 \]
 Then, by Proposition \ref{key_lem} and the fact $\zeta_{\cF}(\{1\}^m)=0$ for a positive integer $m$, we have
 \[
  \zeta_{\mathcal{F}} (H(k,r,n))=0.
 \]
 Moreover, by the fact $\zeta_{\cF}(S(k, r))=0$ (\cite[Theorem 4.1]{Hof15}, \cite[Theorem 1.1]{Mur15}) and the Ohno-type relation $\zeta_{\cF}(G(\bk, m))=0$ (\cite[Theorem 1.5]{Oya18}), we get 
 \[
  \zeta_{\mathcal{F}} (F(k,r,n))=0.
 \]
 This finishes the proof. 
\end{proof}

\section{Other relations}
In this section, we introduce several $\bQ$-linear relations among $t$-FMZVs.
First, we give an algebraic setup introduced by Yamamoto \cite{Yam13}, Li \cite{Li19} and Tanaka--Wakabayashi \cite{TW16}, which is based on those for classical MZ(S)Vs (see Hoffman \cite{Hof97} and Muneta \cite{Mun09}).

Let $\mathfrak{H}_t\coloneqq \mathbb{Q}\langle x, y \rangle[t]$ denote the non-commutative polynomial algebra over $\mathbb{Q}[t]$ in two variables $x$ and $y$, and let $\mathfrak{H}^1_t$ and $\mathfrak{H}^0_t$ denote the subalgebras $\mathbb{Q}[t]+y\mathfrak{H}_t$ and $\mathbb{Q}[t]+y\mathfrak{H}_tx$, respectively. 
We write $\mathfrak{H}_0(=\mathbb{Q}\langle x, y \rangle), \mathfrak{H}^1_0, \mathfrak{H}^0_0$ simply by $\mathfrak{H}, \mathfrak{H}^1, \mathfrak{H}^0$, respectively. 
By setting $z_k\coloneqq yx^{k-1}$ for a positive integer $k$, we can identify $\fH^1$ with $\cI$ by the correspondence $z_{k_1}\cdots z_{k_r} \leftrightarrow (k_1, \ldots, k_r)$. 
We denote by $Z_{\cA}$ (resp.\ $Z_{\cS}$) the extension to the $\bQ[t]$-linear map $Z_{\cA} \colon \fH^1_t \rightarrow \cA[t]$ (resp.\ $Z_{\cS} \colon \fH^1_t \rightarrow (\cZ/\zeta(2)\cZ)[t]$).

 
We define the $\mathbb{Q}[t]$-linear maps $\mathit{Z}^{t}_{\mathcal{A}}\colon\mathfrak{H}^1_t \rightarrow \mathcal{A}[t]$ 
and $\mathit{Z}^{t}_{\mathcal{S}}\colon\mathfrak{H}^1_t \rightarrow (\mathcal{Z}/\zeta(2)\mathcal{Z})[t]$ 
by $\mathit{Z}^{t}_{\mathcal{F}}(1)\coloneqq 1$ 
and
\[
 \mathit{Z}^{t}_{\mathcal{F}}(z_{k_1}\cdots z_{k_r})\coloneqq \zeta^t_{\mathcal{F}}(k_1, \ldots, k_r). 
\]
%


The $\mathbb{Q}[t]$-linear map $S_t\colon\mathfrak{H}^1_t \rightarrow \mathfrak{H}^1_t$ is defined by $S_t(1)\coloneqq 1$ and
\[
 S_t(yw)\coloneqq y\sigma_t(w) \quad (w \in \mathfrak{H}_t), 
\]
where $\sigma_t$ is an automorphism on $\mathfrak{H}_t$ characterized by
\[
 \sigma_t(x)\coloneqq x, \quad \sigma_t(y)\coloneqq tx+y.
\]
Then we find that $S_t$ is invertible (in fact $S^{-1}_t=S_{-t}$) and
\[
 \mathit{Z}^{t}_{\cF}=\mathit{Z}_{\cF} \circ S_t.
\]
Moreover, we see that $S_t(\mathfrak{H}^1_t)=\mathfrak{H}^1_t, S_t(\mathfrak{H}^0_t)=\mathfrak{H}^0_t$, 
and $S_t(y\mathfrak{H}_tx)=y\mathfrak{H}_tx$ holds (see Li \cite[p.4]{Li19}).


\subsection{Harmonic relation}
The $t$-harmonic product $\overset{t}{\ast}$ introduced by Yamamoto \cite[Definition 3.7]{Yam13} 
is a $\bQ[t]$-bilinear map $\overset{t}{\ast}\colon\fH^1_t \times \fH^1_t \rightarrow \fH^1_t$ defined by the rules
\[
 \begin{cases}
 &1\hart w=w\hart1=w, \\
 &z_kw_1\hart z_lw_2=z_k(w_1\hart z_lw_2)+z_l(z_kw_1\hart w_2)+(1-2t)z_{k+l}(w_1\hart w_2)\\
 &\qquad\qquad\qquad +(1-\delta(w_1)\delta(w_2))(t^2-t)x^{k+l}(w_1\hart w_2),
 \end{cases}
\]
where $w, w_1, w_2 \in \{x, y\}^* \cap \fH^1_t$ and $k$ and $l$ are positive integers . 
Here, $\{x, y\}^*$ is the set of all words generated by the letters $x$ and $y$, and 
the map $\delta\colon\{x, y\}^* \rightarrow \{0, 1\}$ is defined by 
 \[
 \delta(w)\coloneqq 
 \begin{cases}
 1 & \text{if $w=1$}, \\
 0 & \text{if $w\ne1$}.
 \end{cases}
 \] 
Then it is obvious that $\overset{0}{\ast}$ coincides with the usual harmonic product $*$ on $\fH^1$ (see Hoffman \cite{Hof97}) 
and $\overset{1}{\ast}$ coincides with Muneta's $n$-harmonic product $\harb$ on $\fH^1$ defined in \cite[p.9]{Mun09}.

We define the $\bQ$-linear map $\mathit{Z}\colon \fH^0 \rightarrow \bR$ by $\mathit{Z}(1)\coloneqq 1$ and $\mathit{Z}(z_{k_1}\cdots z_{k_r})\coloneqq \zeta(k_1, \ldots, k_r)$.
Then, the harmonic relation for classical MZVs, i.e.,
\[
 \mathit{Z}(w_1 \ast w_2)=\mathit{Z}(w_1)\mathit{Z}(w_2) 
\]
is naturally derived from the series representation. 
Similarly for FMZVs, the following relation holds.
\begin{prop} \label{har rel}
 For $w_1, w_2 \in \fH^1$, we have
 \[
  \mathit{Z}_{\cF}(w_1 \ast w_2)=\mathit{Z}_{\cF}(w_1)\mathit{Z}_{\cF}(w_2).
 \]
\end{prop}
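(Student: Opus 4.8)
The plan is to show that for each $\cF\in\{\cA,\cS\}$ the map $Z_{\cF}$ is an algebra homomorphism from $(\fH^1,\ast)$ to its target ring, where $\ast=\overset{0}{\ast}$ is the usual harmonic product. Since both $\ast$ and $Z_{\cF}$ are $\bQ$-bilinear, it suffices to verify the identity on monomials: take $w_1=z_{k_1}\cdots z_{k_r}$ and $w_2=z_{l_1}\cdots z_{l_s}$, corresponding to indices $\bk$ and $\bl$, and check $Z_{\cF}(w_1\ast w_2)=Z_{\cF}(w_1)Z_{\cF}(w_2)$. I would handle $\cF=\cA$ and $\cF=\cS$ separately, since the former is a direct series manipulation while the latter needs the regularization theory of Ihara--Kaneko--Zagier \cite{IKZ06}.

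For $\cF=\cA$ I would argue prime by prime. Fix a prime $p$ and set $\zeta_p(\bk)\coloneqq\sum_{0<m_1<\cdots<m_r<p}m_1^{-k_1}\cdots m_r^{-k_r}\in\bF_p$ (inverses taken in $\bF_p$), so that $\zeta_{\cA}(\bk)=(\zeta_p(\bk)\bmod p)_p$. Because these truncated sums are \emph{finite}, there is no convergence constraint, and the product $\zeta_p(\bk)\,\zeta_p(\bl)$ can be expanded by partitioning each pair of strictly increasing tuples $(m_i)$ and $(n_j)$ according to the order pattern of the merged multiset $\{m_i\}\cup\{n_j\}$: non-coinciding indices interleave, while a coincidence $m_i=n_j$ contributes the exponent $k_i+l_j$. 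This partition is precisely the recursion $z_kw_1\ast z_lw_2=z_k(w_1\ast z_lw_2)+z_l(z_kw_1\ast w_2)+z_{k+l}(w_1\ast w_2)$ defining the harmonic product, whence $\zeta_p(\bk)\zeta_p(\bl)=\zeta_p(\bk\ast\bl)$. Reducing modulo $p$ and assembling over all primes gives $Z_{\cA}(w_1\ast w_2)=Z_{\cA}(w_1)Z_{\cA}(w_2)$.

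For $\cF=\cS$ I would start from the fact that harmonic regularization \cite{IKZ06} yields an algebra homomorphism $(\fH^1,\ast)\to\bR$, so $\zeta^{\ast}(w_1\ast w_2)=\zeta^{\ast}(w_1)\zeta^{\ast}(w_2)$ holds for the regularized reals appearing in the definition of $\zeta_{\cS}$. Writing $\zeta_{\cS}(\bk)$ as the signed sum over deconcatenations $w=uv$ of $(-1)^{\wt(v)}\,\zeta^{\ast}(u)\,\zeta^{\ast}(\overline v)$, where $\overline v$ denotes the reversal of $v$, I would expand $\zeta_{\cS}(w_1\ast w_2)$ into such signed reversal sums, apply the homomorphism property of $\zeta^{\ast}$ to each factor, and match the outcome against $\zeta_{\cS}(w_1)\zeta_{\cS}(w_2)$ modulo $\zeta(2)\cZ$; alternatively the stuffle relation for $\cS$-values may be quoted directly from Kaneko--Zagier \cite{KZ19} and Kaneko \cite{Kan19}.

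The routine part is the $\cA$ case, which is just classical quasi-shuffle bookkeeping carried out in $\bF_p$. The main obstacle is the $\cS$ case: controlling the interaction of the reversal-and-sign combination defining $\zeta_{\cS}$ with the harmonic product, and checking that the contributions which are multiples of $\zeta(2)$ (the only possible obstruction to multiplicativity) land in $\zeta(2)\cZ$ and so vanish in the quotient. This is the step that genuinely uses the structure of harmonic regularization rather than the bare series definition, and I expect it to absorb most of the work (or to be settled by an appeal to the literature).
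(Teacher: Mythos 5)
Your proposal is correct, but be aware that the paper offers no proof of this proposition at all: it is stated as the known finite counterpart of the classical harmonic relation (the $\cA$-case being folklore about truncated harmonic sums, the $\cS$-case going back to the Kaneko--Zagier theory), so everything you write is additional content relative to the paper. Your $\cA$-argument is complete as sketched: the stuffle decomposition of a product of two truncated harmonic sums with the same cutoff is an identity between finite sums, valid verbatim over $\bF_p$, so $\zeta_p(\bk)\zeta_p(\bl)=\zeta_p(\bk\ast\bl)$ for every prime $p$, and the relation passes to $\cA$ componentwise. For the $\cS$-case your plan is the right one, but the ``matching'' step you defer is the entire content, and it rests on two facts you should name explicitly: (i) the deconcatenation coproduct $\Delta$ on $\fH^1$ is an algebra map for the harmonic product, $\Delta(w_1\ast w_2)=\Delta(w_1)\ast\Delta(w_2)$ (Hoffman's quasi-shuffle bialgebra structure); and (ii) the signed reversal $\rho(v)\coloneqq(-1)^{\wt(v)}\overline{v}$ is a homomorphism for $\ast$, since reversal commutes with the stuffle and every term of $v\ast w$ has weight $\wt(v)+\wt(w)$. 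Writing $\zeta^{\mathrm{reg}}\colon(\fH^1,\ast)\to\bR$ for the algebra homomorphism given by taking constant terms of harmonic regularization (it is a homomorphism because evaluation of polynomials in the regularization parameter at $0$ is a ring map), the definition of $\zeta_{\cS}$ says precisely that $\zeta_{\cS}$ is the convolution of the two homomorphisms $\zeta^{\mathrm{reg}}$ and $\zeta^{\mathrm{reg}}\circ\rho$ with respect to $\Delta$; since the convolution of algebra homomorphisms into a commutative ring over a bialgebra is again an algebra homomorphism, you get $\zeta_{\cS}(w_1\ast w_2)=\zeta_{\cS}(w_1)\zeta_{\cS}(w_2)$ exactly in $\cZ$, before any reduction. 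In particular your worry about controlling $\zeta(2)$-multiples evaporates: the quotient by $\zeta(2)\cZ$ plays no role in the multiplicativity, only in the definition of $\zeta_{\cS}$ as an element of $\cZ/\zeta(2)\cZ$. With (i) and (ii) supplied, your route is self-contained, which is a genuine gain over the paper's bare assertion, whose $\cS$-case otherwise leans on the unpublished Kaneko--Zagier manuscript.
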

In a similar way for $t$-MZVs, we have the following. 
\begin{thm}[Harmonic relation]\label{harF}
For $w_1, w_2 \in \fH^1_t$, we have
 \[
 \mathit{Z}^{t}_{\cF}(w_1 \overset{t}{\ast} w_2)=\mathit{Z}^{t}_{\cF}(w_1)\mathit{Z}^{t}_{\cF}(w_2).
 \]
\end{thm}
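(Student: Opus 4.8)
The plan is to deduce Theorem \ref{harF} from the harmonic relation for FMZVs (Proposition \ref{har rel}) by transporting the $t$-harmonic product through the map $S_t$. The crux is a purely algebraic identity in $\fH^1_t$, namely that $S_t$ is an isomorphism of $\bQ[t]$-algebras from $(\fH^1_t, \hart)$ onto $(\fH^1_t, \ast)$:
\begin{align}\label{eq: intertwine}
 S_t(w_1 \hart w_2)=S_t(w_1)\ast S_t(w_2) \qquad (w_1, w_2 \in \fH^1_t).
\end{align}
Because \eqref{eq: intertwine} refers only to the combinatorial algebra $\fH^1_t$ and involves no zeta values, it is identical to the intertwining underlying Yamamoto \cite{Yam13} in the $t$-MZV setting; I would quote it directly, the $t$-harmonic product $\hart$ and the map $S_t$ here being exactly his.

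Granting \eqref{eq: intertwine}, the theorem follows from a short chain of equalities. Using $\mathit{Z}^{t}_{\cF}=\mathit{Z}_{\cF}\circ S_t$, then \eqref{eq: intertwine}, then the $\bQ[t]$-bilinear extension of Proposition \ref{har rel}, and finally $\mathit{Z}^{t}_{\cF}=\mathit{Z}_{\cF}\circ S_t$ once more, I would write
\begin{align*}
 \mathit{Z}^{t}_{\cF}(w_1 \hart w_2)
 &=\mathit{Z}_{\cF}\bigl(S_t(w_1 \hart w_2)\bigr)
  =\mathit{Z}_{\cF}\bigl(S_t(w_1)\ast S_t(w_2)\bigr)\\
 &=\mathit{Z}_{\cF}\bigl(S_t(w_1)\bigr)\,\mathit{Z}_{\cF}\bigl(S_t(w_2)\bigr)
  =\mathit{Z}^{t}_{\cF}(w_1)\,\mathit{Z}^{t}_{\cF}(w_2).
\end{align*}
The only point to verify is that Proposition \ref{har rel}, as stated for $w_1, w_2 \in \fH^1$, may be applied to the arguments $S_t(w_1), S_t(w_2) \in \fH^1_t$. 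This I would handle by expanding each argument as a $\bQ[t]$-linear combination of words in $\fH^1$ and invoking that $\ast$ is $\bQ[t]$-bilinear, that $\mathit{Z}_{\cF}$ is $\bQ[t]$-linear, and that the target ring is commutative; the multiplicativity on words is exactly Proposition \ref{har rel}.

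If a self-contained proof of \eqref{eq: intertwine} is wanted, I would argue by induction on $\wt(w_1)+\wt(w_2)$, reducing by bilinearity to words. The base case $w_1=1$ or $w_2=1$ is immediate from $S_t(1)=1$ and the unit axioms for $\hart$ and $\ast$. For the inductive step with $w_1=z_kw_1'$ and $w_2=z_lw_2'$, I would apply $S_t$ to the defining recursion of $z_kw_1'\hart z_lw_2'$ and track, using $\sigma_t(x)=x$ and $\sigma_t(y)=tx+y$, how $S_t$ interacts with prepending $z_m$ and $x^m$. The main obstacle is precisely this matching: once $\sigma_t$ turns each exposed leading $y$ into $tx+y$, the altered coefficient $(1-2t)$ of the $z_{k+l}$-term together with the extra summand $(1-\delta(w_1')\delta(w_2'))(t^2-t)x^{k+l}(\cdots)$ in the definition of $\hart$ must recombine exactly into the three standard terms of the $\ast$-recursion for $S_t(w_1)\ast S_t(w_2)$. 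Verifying this bookkeeping is the sole nontrivial computation, and since it is already carried out in \cite{Yam13} I would, in the final text, simply cite it.
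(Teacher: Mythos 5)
Your proposal is correct and coincides with the argument the paper intends: the paper's ``In a similar way for $t$-MZVs'' is shorthand for exactly your chain of equalities, namely $\mathit{Z}^{t}_{\cF}=\mathit{Z}_{\cF}\circ S_t$, Yamamoto's intertwining identity $S_t(w_1 \hart w_2)=S_t(w_1)\ast S_t(w_2)$, and Proposition \ref{har rel} extended $\bQ[t]$-bilinearly to $\fH^1_t$. Your final remark that the intertwining identity itself should simply be cited from \cite{Yam13} rather than re-proved is also how the paper treats it.
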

The symmetric sum formula for FMZVs was proved by Hoffman \cite[Theorem 4.5]{Hof15} for $\cA$-MZ(S)Vs and by the first named author \cite{Mur15} for $\cS$-MZ(S)Vs. 
We give a generalization of these theorems for $t$-FMZVs.
\begin{thm}[Symmetric sum formula] \label{symsum}
 For a non-empty index $(k_1,\dots,k_r)$, we have
 \begin{align*}
  \sum_{\sigma \in \fS_r} \zeta_\mathcal{F}^t (k_{\sigma(1)},\dots, k_{\sigma(r)})
  =0.
 \end{align*}
\end{thm}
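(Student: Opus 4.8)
The plan is to prove the formula by extracting the coefficient of each power of $t$ and reducing to the symmetric sum formula for FMZVs at lower depth. Grouping the defining sum of $\zeta^t_{\cF}$ by the number of pluses, we have $\zeta^t_{\cF}(k_{\sigma(1)}, \ldots, k_{\sigma(r)}) = \sum_{m=0}^{r-1}\zeta_{\cF}\bigl((k_{\sigma(1)}, \ldots, k_{\sigma(r)})^t_m\bigr)t^m$. Summing over $\sigma \in \fS_r$, it therefore suffices to show that for every $m$ with $0\le m\le r-1$ the element
\[
 X_m \coloneqq \sum_{\sigma \in \fS_r}(k_{\sigma(1)}, \ldots, k_{\sigma(r)})^t_m \in \cI
\]
is annihilated by $\zeta_{\cF}$.

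The key step is a combinatorial rewriting of $X_m$. Each monomial of $(k_{\sigma(1)}, \ldots, k_{\sigma(r)})^t_m$ arises by inserting $m$ pluses into the $r-1$ gaps of $(k_{\sigma(1)}, \ldots, k_{\sigma(r)})$, which groups the positions $1, \ldots, r$ into $r-m$ consecutive blocks and replaces each block by the sum of its entries. As $\sigma$ and the insertion both vary, the pair (permutation, insertion) ranges exactly over all ordered set partitions of $\{1, \ldots, r\}$ into $r-m$ nonempty blocks, and a given ordered partition $(B_1, \ldots, B_{r-m})$ is produced by precisely $\prod_j |B_j|!$ such pairs, namely the orderings of the entries inside the blocks, which do not affect the block sums. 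Regrouping ordered partitions by their underlying unordered partition $\pi=\{P_1, \ldots, P_{r-m}\}$ and writing $k_P\coloneqq\sum_{i\in P}k_i$, this yields
\[
 X_m=\sum_{\pi}\Bigl(\prod_{P\in\pi}|P|!\Bigr)\sum_{\tau\in\fS_{r-m}}(k_{P_{\tau(1)}}, \ldots, k_{P_{\tau(r-m)}}),
\]
where $\pi$ runs over all partitions of $\{1, \ldots, r\}$ into exactly $r-m$ blocks.

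I would then apply $\zeta_{\cF}$ to this identity. For each fixed $\pi$ the inner sum $\sum_{\tau\in\fS_{r-m}}(k_{P_{\tau(1)}}, \ldots, k_{P_{\tau(r-m)}})$ is exactly the symmetric sum of the depth-$(r-m)$ index $(k_{P_1}, \ldots, k_{P_{r-m}})$, so by the symmetric sum formula for FMZVs (Hoffman \cite[Theorem 4.5]{Hof15}, Murahara \cite{Mur15}) its image under $\zeta_{\cF}$ vanishes. Hence $\zeta_{\cF}(X_m)=0$ for every $m$, and therefore $\sum_{\sigma\in\fS_r}\zeta^t_{\cF}(k_{\sigma(1)}, \ldots, k_{\sigma(r)})=0$. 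Here the extreme case $r-m=1$ is the depth-one instance $\zeta_{\cF}(k)=0$, which is itself contained in that formula.

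The main obstacle is establishing the identity for $X_m$: one must check that each ordered set partition occurs with multiplicity exactly $\prod_j|B_j|!$ and that regrouping by unordered partitions reassembles the inner sums into genuine symmetric sums. This is pure bookkeeping (consistent, for instance, with the total count $r!\binom{r-1}{m}$ of admissible pairs). A more algebraic alternative would expand the iterated $t$-harmonic product $z_{k_1}\hart\cdots\hart z_{k_r}$ and invoke Theorem \ref{harF} together with $\zeta_{\cF}(k)=0$; however, the $t$-harmonic product carries the additional $(1-2t)$ and $(t^2-t)$ terms, so the coefficient-wise reduction above to the already-known FMZV symmetric sum formula is cleaner.
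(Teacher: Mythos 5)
Your proof is correct, but it is not the paper's route. In fact the paper prints no proof of Theorem \ref{symsum} at all: the statement is placed in the subsection on the harmonic relation, immediately after Theorem \ref{harF}, with a remark pointing to Li's $t$-MZV counterpart \cite[Theorem 3.2]{Li19}, so the intended derivation is the harmonic-algebraic one --- expand the iterated $t$-harmonic product $z_{k_1}\hart\cdots\hart z_{k_r}$, observe that its image under $\mathit{Z}^{t}_{\cF}$ equals $\prod_{i=1}^{r}\zeta^t_{\cF}(k_i)=0$ by Theorem \ref{harF} and the vanishing of depth-one values, and induct on depth to strip away the lower-depth terms produced by the $(1-2t)z_{k+l}$ and $(t^2-t)x^{k+l}$ parts of $\hart$; this is the argument of Hoffman \cite{Hof15} and Murahara \cite{Mur15} at $t=0$ and of Li \cite{Li19} for $t$-MZVs. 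You instead work coefficient-by-coefficient in $t$ and never touch the $t$-harmonic product: the identification of a pair (permutation, plus-insertion) with an ordered set partition $(B_1,\dots,B_{r-m})$, counted with multiplicity exactly $\prod_j|B_j|!$ (orderings inside blocks are free, and the plus positions are then forced), followed by regrouping over unordered partitions, writes each $X_m$ as a nonnegative-integer combination of genuine depth-$(r-m)$ symmetric sums, so the theorem reduces outright to its own known $t=0$ case; your fiber count is confirmed by the total $r!\binom{r-1}{m}$, and the extreme case $r-m=1$ is indeed just $\zeta_{\cF}(k)=0$. Both arguments are sound; the harmonic route stays inside the algebraic framework of Section 5 and requires no new combinatorics beyond what Theorem \ref{harF} already encodes, while yours is self-contained and elementary, sidesteps the extra interpolation terms of $\hart$ entirely, and makes transparent that the interpolated formula carries, coefficient-wise, no more content than the FMZV symmetric sum formula in lower depth.
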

\begin{rem}
The work by Li \cite[Theorem 3.2]{Li19} is known for the counterpart in $t$-MZVs of Theorem 5.3. 
\end{rem}
Lastly, we give an antipode-like relation for $t$-FMZVs, which is derived from the Hopf algebra structure of the quasi-symmetric functions (see Hoffman \cite{Hof15}). 
\begin{prop}[Antipode-like relation] \label{antipode}
 For a non-empty index $(k_1,\dots,k_r)$, we have
 \begin{align*}
  \sum_{i=0}^{r}(-1)^{i} 
   \zeta_\mathcal{\mathcal{F}}^{t} (k_1,\dots,k_{i})
   \zeta_\mathcal{F}^{1-t} (k_{r,},\dots,k_{i+1})=0.
 \end{align*}
 Here, we understand $\zeta_\mathcal{F}^t (\emptyset)=1$.
\end{prop}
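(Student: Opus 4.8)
The plan is to lift the identity to the polynomial algebra $\fH^1_t$ and recognise it as the defining axiom of the antipode of the quasi-shuffle Hopf algebra. Write $w=z_{k_1}\cdots z_{k_r}$ and, for $0\le i\le r$, put $w_{\le i}:=z_{k_1}\cdots z_{k_i}$ and $w_{>i}:=z_{k_{i+1}}\cdots z_{k_r}$ (so $w_{\le0}=w_{>r}=1$). Using $Z^t_\cF=Z_\cF\circ S_t$ (hence $Z^{1-t}_\cF=Z_\cF\circ S_{1-t}$ after the substitution $t\mapsto1-t$) together with the harmonic relation $Z_\cF(u\ast v)=Z_\cF(u)Z_\cF(v)$ of Proposition \ref{har rel}, the left-hand side of the Proposition equals $Z_\cF\bigl(E(t)\bigr)$, where
\[
 E(t):=\sum_{i=0}^r(-1)^i\,S_t(w_{\le i})\ast S_{1-t}\bigl(z_{k_r}\cdots z_{k_{i+1}}\bigr)\in\fH^1_t .
\]
It therefore suffices to prove the stronger, purely algebraic statement $E(t)=0$ in $\fH^1_t$ for every non-empty index.

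To this end I would equip $\fH^1\cong\cI$ with the deconcatenation coproduct $\Delta(z_{k_1}\cdots z_{k_r})=\sum_{i=0}^r w_{\le i}\otimes w_{>i}$, counit $\epsilon$ and unit $\eta$; with the harmonic product $\ast$ this is the connected graded quasi-shuffle Hopf algebra of Hoffman \cite{Hof15}, whose antipode $\widehat S$ is given in the monomial basis by $\widehat S(z_{k_1}\cdots z_{k_r})=(-1)^r\,S_1(z_{k_r}\cdots z_{k_1})$. Introduce the $\bQ[t]$-linear maps $\nu(v)=(-1)^{\dep(v)}v$ and $\rho(z_{l_1}\cdots z_{l_s})=z_{l_s}\cdots z_{l_1}$, and set $A:=S_t\circ\nu$ and $B:=S_{1-t}\circ\rho$, so that $\widehat S=S_1\circ\rho\circ\nu$. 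Since $\dep$ is additive under deconcatenation, $\nu$ is a coalgebra automorphism; the same holds for $S_t$ (see the last paragraph), hence for $A$. As $A(w_{\le i})=(-1)^iS_t(w_{\le i})$ and $B(w_{>i})=S_{1-t}(z_{k_r}\cdots z_{k_{i+1}})$, we get $E(t)=(A\star B)(w)$, where $\star$ denotes the convolution product $f\star g:=\ast\circ(f\otimes g)\circ\Delta$.

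Now I would invoke the standard fact that for a coalgebra morphism $A$ one has $A\star(\widehat S\circ A)=\eta\epsilon$, which follows from $\ast\circ(A\otimes\widehat S A)\circ\Delta=\ast\circ(\mathrm{id}\otimes\widehat S)\circ(A\otimes A)\circ\Delta=\ast\circ(\mathrm{id}\otimes\widehat S)\circ\Delta\circ A=\eta\epsilon$. It then remains to identify $B=\widehat S\circ A$, i.e. $\widehat S\circ S_t\circ\nu=S_{1-t}\circ\rho$. Using the three elementary relations $S_aS_b=S_{a+b}$ (a consequence of $\sigma_a\sigma_b=\sigma_{a+b}$), $\rho S_t=S_t\rho$, and $\nu S_t\nu=S_{-t}$, one computes
\[
 \widehat S\circ S_t\circ\nu=S_1\rho\,(\nu S_t\nu)=S_1\rho\,S_{-t}=S_1S_{-t}\,\rho=S_{1-t}\,\rho=B .
\]
Hence $A\star B=\eta\epsilon$, so $E(t)=(A\star B)(w)=\eta\epsilon(w)=0$ for every non-empty $w$, and applying $Z_\cF$ yields the Proposition.

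The main obstacle is the coalgebra setup of the second paragraph: one must check rigorously that $S_t$ is a coalgebra morphism for $\Delta$ (so that the convolution-inverse formula $A^{\star-1}=\widehat S\circ A$ applies) and pin down the explicit antipode $\widehat S=S_1\rho\nu$. The coproduct-compatibility of $S_t$ rests on the bijection between a coarsening of $w$ equipped with a chosen split and a triple consisting of a split point $i$ of $w$ together with independent coarsenings of $w_{\le i}$ and $w_{>i}$; under this bijection the number of contractions, hence the exponent of $t$, adds up, giving $\Delta\circ S_t=(S_t\otimes S_t)\circ\Delta$. Once this is in place, the three commutation relations $S_aS_b=S_{a+b}$, $\rho S_t=S_t\rho$ and $\nu S_t\nu=S_{-t}$ are routine verifications on the monomial basis, the last following from $(-t)^{r-\dep}=(-1)^r(-1)^{\dep}t^{r-\dep}$.
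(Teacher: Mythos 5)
Your proposal is correct and takes essentially the approach the paper itself intends: the paper states this proposition without a detailed proof, attributing it to the Hopf algebra structure of quasi-symmetric functions (Hoffman), and your convolution-inverse argument in the quasi-shuffle Hopf algebra $(\fH^1,\ast,\Delta)$ --- establishing the stronger algebraic identity $\sum_{i=0}^{r}(-1)^iS_t(z_{k_1}\cdots z_{k_i})\ast S_{1-t}(z_{k_r}\cdots z_{k_{i+1}})=(A\star(\widehat{S}\circ A))(w)=\eta\epsilon(w)=0$ and then applying $Z_{\cF}$ via $Z^t_{\cF}=Z_{\cF}\circ S_t$ and the harmonic relation --- is exactly that derivation carried out in full. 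One minor presentational caveat: rename your auxiliary maps, since the paper already uses $\nu$ (for the weight-signed reversal) and $\rho$ (in the definition of the $t$-shuffle product) to denote different maps.
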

\begin{rem}
 It is well-known that the same relation holds for various MZVs with the $t$-harmonic product structure. 
 For related works, see Hoffman \cite{Hof00}, Zlobin \cite[Theorem 3]{Zlo05}, Kawashima \cite[Proposition 7]{Kaw09}, Ihara--Kajikawa--Ohno--Okuda \cite[Proposition 6]{IKOO11}, Yamamoto \cite[Proposition 3.9]{Yam13}, Hoffman \cite[Theorem 3.1]{Hof15}, Saito \cite[Proposition 2.9]{Sai17}, and Seki \cite[Proposition 5.9]{Sek17}.
\end{rem}

\subsection{Shuffle relation}
The $t$-shuffle product $\sht$, introduced by Li \cite{Li19}, is the $\bQ[t]$-bilinear map $\sht\colon\fH_t \times \fH_t \rightarrow \fH_t$ defined by the rules
\[
 \begin{cases}
 & 1 \sht w=w\sht1=w, \\
 &aw_1 \sht bw_2=a(w_1\sht bw_2)+b(aw_1\sht w_2)-\delta(w_1)\rho(a)bw_2-\delta(w_2)\rho(b)aw_1,
 \end{cases}
\]
where $w, w_1, w_2 \in \{x, y\}^*$ and $a, b \in \{x, y\}$. 
Here, $\rho\colon\{x, y\} \rightarrow \fH_t$ is the map defined by 
\[
 \rho(x)\coloneqq 0, \quad \rho(y)\coloneqq ty.
\]
Then it is obvious that $\overset{0}{\sh}$ coincides with the usual shuffle product $\sh$, but $\overset{1}{\sh}$ is
different from Muneta's $n$-shuffle product in \cite{Mun09}.
We note that the product $\sht$ is associative and commutative as mentioned by Li \cite{Li19}.

The shuffle relation of MZVs, which is naturally derived from the iterated integral representation, states that   
\begin{align} \label{shMZV}
 Z(w_1 \sh w_2)=Z(w_1)Z(w_2) \quad (w_1, w_2 \in \fH^0). 
\end{align}
We define a $\bQ$-linear map $\nu\colon\fH^1 \rightarrow \fH^1$ by $\nu(1)=1$ and $\nu(z_{k_1}\cdots z_{k_r})=(-1)^{k_1+\cdots+k_r}z_{k_r}\cdots z_{k_1}$ for positive integers $k_1, \ldots, k_r$,
and extend this $\bQ[t]$-linearly on $\fH^1_t$. 
The following relation obtained by Kaneko--Zagier \cite{KZ19} is considered to be the counterpart of \eqref{shMZV} for FMZVs. 
\begin{thm}
 For $w_1, w_2 \in \fH^1$, we have
 \begin{align*}
  Z_{\cF}(w_1 \sh w_2)=Z_{\cF}\bigl(w_1 \nu(w_2)\bigr).
 \end{align*}
 Especially when $w_1=1$ and $w_2=w$, we have
 \[
  \mathit{Z}_{\cF}(w)=\mathit{Z}_{\cF}(\nu(w)). 
 \]
\end{thm}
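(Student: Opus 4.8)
The plan is to reduce, by $\bQ$-bilinearity, to monomials $w_1=z_{k_1}\cdots z_{k_r}$ and $w_2=z_{l_1}\cdots z_{l_s}$, and to run both sides through the reflection $m\mapsto p-m$ (for $\cF=\cA$), with the real duality $t\mapsto1-t$ playing the same role for $\cF=\cS$. First I would settle the ``especially'' clause, the reversal relation $Z_{\cF}(w)=Z_{\cF}(\nu(w))$, which is precisely the case $w_1=1$. For $\cF=\cA$ this comes from substituting $m_i\mapsto p-m_{r+1-i}$ in the defining sum modulo $p$: the chain $0<m_1<\cdots<m_r<p$ is preserved while each factor contributes $(p-m)^{-k}\equiv(-1)^k m^{-k}\pmod p$, so that $\zeta_{\cA}(k_1,\dots,k_r)=(-1)^{k_1+\cdots+k_r}\zeta_{\cA}(k_r,\dots,k_1)$. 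For $\cF=\cS$ the same reversal follows by reindexing $i\mapsto r-i$ in the defining expression of $\zeta_{\cS}$ together with the corresponding reversal of the regularized one-sided values.

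For the general identity I would first rewrite the right-hand side. Since $\nu$ is an involution with $\nu(z_{l_1}\cdots z_{l_s})=(-1)^{l_1+\cdots+l_s}z_{l_s}\cdots z_{l_1}$, we have
\[
 Z_{\cF}\bigl(w_1\,\nu(w_2)\bigr)=(-1)^{l_1+\cdots+l_s}\,\zeta_{\cF}(k_1,\dots,k_r,l_s,\dots,l_1).
\]
For $\cF=\cA$ I would reflect \emph{only} the last $s$ summation variables of $\zeta_{\cA}(k_1,\dots,k_r,l_s,\dots,l_1)$ via $m_{r+j}\mapsto p-u_{s+1-j}$. The original chain then splits into two independent increasing chains $0<n_1<\cdots<n_r<p$ and $0<u_1<\cdots<u_s<p$ (with $n_i=m_i$) subject to the single coupling $n_r+u_s<p$ coming from $m_r<m_{r+1}$, while the reflected factors produce a second sign $(-1)^{l_1+\cdots+l_s}$ cancelling the first. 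Hence the right-hand side equals the constrained double sum
\[
 \Biggl(\sum_{\substack{0<n_1<\cdots<n_r<p\\ 0<u_1<\cdots<u_s<p\\ n_r+u_s<p}}\frac{1}{n_1^{k_1}\cdots n_r^{k_r}\,u_1^{l_1}\cdots u_s^{l_s}}\Biggr)_p.
\]

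It remains to identify the left-hand side $Z_{\cA}(w_1\sh w_2)=\sum_{v\in w_1\sh w_2}\zeta_{\cA}(v)$ with this same constrained double sum, and this is the main obstacle. It is the finite counterpart of the fact that a product of iterated integrals over a common interval equals the integral of their shuffle: the shuffle of the two $x,y$-words enumerates exactly the interleavings of the chains $n_\bullet$ and $u_\bullet$ into one increasing chain, and one must show that summing $\zeta_{\cA}$ over all these interleavings reproduces the sum over two independent chains subject only to $n_r+u_s<p$. I would prove this by induction on $r+s$ directly from the recursive definition of $\sh$, the point being that resolving the largest summation index at each stage produces exactly the top-coupling $n_r+u_s<p$; note that this differs from the harmonic product of Proposition~\ref{har rel}, whose product $Z_{\cA}(w_1)Z_{\cA}(w_2)$ is the fully uncoupled double sum, the difference being supported on $n_r+u_s\ge p$.

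Finally, $\cF=\cS$ runs in parallel. Since $\zeta_{\cS}$ is, modulo $\zeta(2)$, independent of the choice between harmonic and shuffle regularization (Kaneko--Zagier \cite{KZ19}), I may compute $\zeta_{\cS}$ using the shuffle-regularized real multiple zeta values, which furnish an algebra homomorphism out of $(\fH^1,\sh)$. The role of $m\mapsto p-m$ is then played by the duality $t\mapsto1-t$ of real iterated integrals, and the same shuffle--merge bookkeeping, carried out inside $\cZ/\zeta(2)\cZ$, yields $Z_{\cS}(w_1\sh w_2)=Z_{\cS}(w_1\,\nu(w_2))$. Transporting the top-coupling combinatorics to the $\cS$-side and reconciling the two regularizations modulo $\zeta(2)$ is the analogue of the main obstacle there.
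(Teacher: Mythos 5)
Your two reduction steps are correct and are exactly the right ones for $\cF=\cA$: the reversal $\zeta_{\cA}(k_1,\dots,k_r)=(-1)^{k_1+\cdots+k_r}\zeta_{\cA}(k_r,\dots,k_1)$ via $m\mapsto p-m$, and the rewriting of $Z_{\cA}(w_1\nu(w_2))$ as the double sum over two independent chains coupled only by $n_r+u_s<p$. The problem is the step you yourself call ``the main obstacle'': identifying $Z_{\cA}(w_1\sh w_2)$ with that coupled sum is precisely the content of Kaneko--Zagier's theorem, and the mechanism you propose for it is wrong. You assert that ``the shuffle of the two $x,y$-words enumerates exactly the interleavings of the chains $n_\bullet$ and $u_\bullet$''. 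It does not: interleaving the summation chains gives the index-level products --- the harmonic product $\ast$ of Proposition \ref{har rel} if diagonal terms are allowed, or the product $\sha$ of Section 3 if not --- never the word-level $\sh$. Concretely, $z_2\sh z_2=2z_2z_2+4z_1z_3$, while the interleavings of two one-element chains produce only $2z_2z_2$ (plus the diagonal $z_4$). In the real case shuffles enumerate interleavings of \emph{integration} variables, which have no counterpart among the summation indices $n_i,u_j$; consequently an induction that ``resolves the largest summation index'' along the recursion defining $\sh$ never gets started, because case analysis on the largest index yields stuffle-type decompositions. What actually closes the gap is the partial-fraction identity
\begin{align*}
\frac{1}{n^a u^b}=\sum_{j=0}^{a-1}\binom{b-1+j}{j}\frac{1}{n^{a-j}(n+u)^{b+j}}
+\sum_{j=0}^{b-1}\binom{a-1+j}{j}\frac{1}{u^{b-j}(n+u)^{a+j}}
\end{align*}
applied to the top pair $(n_r,u_s)$, followed by the substitution $m=n_r+u_s$; the induction then goes through only if one also proves the matching Euler-type decomposition of $w_1\sh w_2$ with respect to the last blocks of the two words. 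This is the Komori--Matsumoto--Tsumura-style argument the paper alludes to in its remark (it in fact yields the identity for every truncation point $N$, not just modulo $p$); alternatively one can use a connected-sum argument. Neither is routine bookkeeping, and your sketch contains neither.

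The $\cS$ case is thinner still. Since $\zeta_{\cS}$ is not given by any sum over chains, ``transporting the top-coupling combinatorics to the $\cS$-side'' has no literal meaning. What is needed is (i) the comparison between the harmonic-regularized symmetrization (the paper's definition) and the shuffle-regularized one modulo $\zeta(2)$, which you may cite from \cite{KZ19}, and then (ii) an actual proof of $Z_{\cS}(w_1\sh w_2)=Z_{\cS}(w_1\nu(w_2))$ for the shuffle-regularized version, which comes from path reversal $t\mapsto 1-t$ of iterated integrals, i.e.\ from the antipode of the shuffle Hopf algebra, as in \cite{Jar14}, \cite{Hir19}, or \cite{OSY19}. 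You name the right ingredients but prove neither. For calibration: the paper itself offers no proof of this theorem --- it quotes it from Kaneko--Zagier \cite{KZ19} and points to \cite{Ono17} for $\cA$ and the references above for $\cS$ --- so a self-contained argument would have to supply exactly the two proofs your proposal leaves open.
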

As a generalization of this theorem, we introduce the following theorem for $t$-FMZVs.
To state the theorem, let $L_y : \fH \rightarrow \fH$ be the $\bQ$-linear map defined by $L_y(w)\coloneqq yw$ for $w \in \fH$. We extend it to the $\bQ[t]$-linear map $L_y : \fH_t \rightarrow \fH_t$. Note that the inverse map $L^{-1}_y : y\fH_t \rightarrow \fH_t$ is defined by $L^{-1}_y(yw)\coloneqq w$ for $w \in \fH_t$.
\begin{thm}[Shuffle relation]\label{shF}
 For $w_1, w_2 \in y\fH_t$, we have
 \begin{align*}
  \mathit{Z}^{t}_{\cF}(w_1 \overset{t}{\sh} w_2)
  =\mathit{Z}^{t}_{\cF}\bigl(w_1\nu(w_2)-w_1xL^{-1}_y(\nu(w_2)t)\bigr),
 \end{align*}
 where we understand $w_1xL_{y}^{-1}(\nu(w_2)t)=0$ when $w_1\in\mathbb{Q}[t]$.
\end{thm}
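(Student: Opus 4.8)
The plan is to reduce the identity to the classical FMZV shuffle relation $Z_{\cF}(w_1\sh w_2)=Z_{\cF}(w_1\nu(w_2))$ proved just above, using the factorization $Z^t_{\cF}=Z_{\cF}\circ S_t$. As every term is a value of $Z^t_{\cF}$, I first compute how $S_t$ acts on each side's argument.

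For the right-hand side, observe that $w_2\in y\fH_t$ forces $\nu(w_2)\in y\fH_t$, so $\nu(w_2)=y\,L^{-1}_y(\nu(w_2))$ and the argument rewrites as
\[
 w_1\nu(w_2)-w_1xL^{-1}_y(\nu(w_2)t)=w_1\,(y-tx)\,L^{-1}_y(\nu(w_2)).
\]
Since $w_1\in y\fH_t$, the defining rule $S_t(yw)=y\sigma_t(w)$ yields the multiplicativity $S_t(w_1v)=S_t(w_1)\sigma_t(v)$ for every $v\in\fH_t$. Taking $v=(y-tx)L^{-1}_y(\nu(w_2))$ and using $\sigma_t(y-tx)=\sigma_t(y)-t\sigma_t(x)=y$, I get
\[
 S_t\bigl(w_1\,(y-tx)\,L^{-1}_y(\nu(w_2))\bigr)=S_t(w_1)\cdot y\,\sigma_t\bigl(L^{-1}_y\nu(w_2)\bigr)=S_t(w_1)\,S_t(\nu(w_2)),
\]
the last step again by $S_t(yw)=y\sigma_t(w)$. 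The second ingredient is the commutation $S_t\circ\nu=\nu\circ S_t$ on $y\fH_t$, which I would prove by induction on depth: both maps are $\bQ[t]$-linear, and on a word $z_{k_1}\cdots z_{k_r}$ one checks agreement directly from $\sigma_t(y)=tx+y$ and the reversal defining $\nu$. This replaces $S_t(\nu(w_2))$ by $\nu(S_t(w_2))$, and the classical FMZV shuffle relation applied to $S_t(w_1),S_t(w_2)\in\fH^1_t$ then shows that the right-hand side equals $Z_{\cF}\bigl(S_t(w_1)\sh S_t(w_2)\bigr)$.

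Since $Z^t_{\cF}(w_1\sht w_2)=Z_{\cF}(S_t(w_1\sht w_2))$, the theorem collapses to the single intertwining
\[
 Z_{\cF}\bigl(S_t(w_1\sht w_2)\bigr)=Z_{\cF}\bigl(S_t(w_1)\sh S_t(w_2)\bigr),
\]
which I expect to be the main obstacle. This is \emph{not} an algebraic identity in $\fH^1_t$ — already $S_t(z_2\sht z_2)-S_t(z_2)\sh S_t(z_2)=6t\,z_4\neq 0$ — so the task is to show that the difference lies in $\ker Z_{\cF}$. I would handle this along the lines of Li's analysis of the $t$-shuffle \cite{Li19}: expand $S_t(w_1\sht w_2)$ from the recursive definition of $\sht$, carrying the $\rho(y)=ty$ corrections through $S_t(yw)=y\sigma_t(w)$, compare term by term with $S_t(w_1)\sh S_t(w_2)$, and verify that every residual term is a $\bQ[t]$-combination of indices annihilated by $Z_{\cF}$ — concretely single zetas, where $\zeta_{\cF}(k)=0$, and their direct consequences coming from the symmetric-sum vanishing for FMZVs. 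Establishing that the discrepancy always has this form is the crux of the proof.
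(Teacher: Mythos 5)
You follow the paper's reduction almost exactly up to the decisive step: your rewriting $w_1\nu(w_2)-w_1xL^{-1}_y(\nu(w_2)t)=w_1(y-tx)L^{-1}_y(\nu(w_2))$ together with $\sigma_t(y-tx)=y$ is precisely Lemma \ref{lem:prodS_t}, your commutation $S_t\circ\nu=\nu\circ S_t$ is Lemma \ref{lem:Snu}, and you reduce, as the paper does, to comparing $Z_{\cF}\bigl(S_t(w_1\sht w_2)\bigr)$ with $Z_{\cF}\bigl(S_t(w_1)\sh S_t(w_2)\bigr)$. The gap is in how you treat this comparison. The paper closes it by quoting the \emph{exact} algebraic identity $S_t(w_1\sht w_2)=S_t(w_1)\sh S_t(w_2)$ (\cite[Proposition 2.1]{LQ17}): since $S_t$ is invertible, the $t$-shuffle of Li and Li--Qin is by its very construction the pullback $w_1\sht w_2=S_{-t}\bigl(S_t(w_1)\sh S_t(w_2)\bigr)$ of $\sh$ along $S_t$, so there is no residual term to dispose of and the proof is a purely formal chain of equalities. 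Your denial of this identity comes from evaluating $\sht$ by the recursion as printed in Section 5.2; but that recursion cannot be the product for which the cited proposition holds, precisely because your computation shows the identity failing for it. For the pullback product one gets instead $z_2\sht z_2=2z_2z_2+4z_1z_3-6t\,z_4$, and your discrepancy vanishes identically; the printed recursion is a mis-transcription of the definition, not evidence that the intertwining is only true modulo $\ker Z_{\cF}$.

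The more serious point is that the repair you propose --- show the defect always lies in $\ker Z_{\cF}$ --- cannot be carried out, so the ``crux'' you defer is not a fillable gap. With the printed recursion one finds $z_1\sht z_2=(2-2t)z_1z_2+z_2z_1$, hence
\[
 S_t(z_1\sht z_2)-S_t(z_1)\sh S_t(z_2)=-2t\,z_1z_2+(3t-2t^2)\,z_3,
\]
and applying $Z_{\cF}$ gives $-2t\,\zeta_{\cF}(1,2)=-6t\,\fZ_{\cF}(3)$, using $\zeta_{\cF}(3)=0$ and the standard evaluation $\zeta_{\cF}(1,2)=3\fZ_{\cF}(3)$. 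So the defect is killed by $Z_{\cF}$ only if $\fZ_{\cF}(3)=0$, i.e.\ only if $p\mid B_{p-3}$ for almost all $p$ (resp.\ $\zeta(3)\in\zeta(2)\cZ$), which is believed false and in any case not provable; equivalently, for the literal printed recursion the theorem itself is false, not merely harder. Your weight-$4$ example $6t\,z_4$ landed in $\ker Z_{\cF}$ only by the accident that every FMZV of weight $4$ vanishes, and your guess that the residual terms are supported on single zetas already fails in weight $3$, where the term $-2t\,z_1z_2$ has depth two. The missing idea is therefore not an analysis of residual terms in the spirit of Li, but the recognition that the intended $t$-shuffle product intertwines exactly with $\sh$ under $S_t$ --- the cited proposition --- which is what makes the paper's six-line proof complete.
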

\begin{rem}
 Taking $w_1=1$ in the above theorem, we have $\mathit{Z}_{\mathcal{F}}^t(w)=\mathit{Z}_{\mathcal{F}}^t(\nu(w))$ for $w \in \fH^1_t$, which is often called the reversal formula.
\end{rem}
To prove Theorem \ref{shF}, we need Lemmas \ref{lem:Snu} and \ref{lem:prodS_t}.
\begin{lem}\label{lem:Snu}
 In $\fH^1_t$, we have
 \begin{align*}
  S_t\circ \nu=\nu\circ S_t.
 \end{align*}
\end{lem}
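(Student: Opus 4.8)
The plan is to prove the identity on homogeneous elements, exploiting the fact that $S_t$ acts through the automorphism $\sigma_t$, which commutes with word reversal. First I would introduce the $\mathbb{Q}[t]$-linear anti-automorphism $R\colon\fH_t\to\fH_t$ defined on words by $R(a_1\cdots a_n)=a_n\cdots a_1$, so that $R(uv)=R(v)R(u)$ and $R$ fixes the letters $x$ and $y$. The role of $R$ is to re-express $\nu$ without reference to the $z$-basis: for a word $u\in\{x,y\}^*$ the element $yu$ is a generator of $\fH^1$, and unwinding $z_k=yx^{k-1}$ together with the definition of $\nu$ gives
\begin{align*}
 \nu(yu)=(-1)^{\wt(yu)}\,y\,R(u).
\end{align*}
Indeed, writing $yu=z_{k_1}\cdots z_{k_r}$ one has $u=x^{k_1-1}y\cdots yx^{k_r-1}$, whence $y\,R(u)=yx^{k_r-1}y\cdots yx^{k_1-1}=z_{k_r}\cdots z_{k_1}$. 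Since every word beginning with $y$ arises this way and $\nu$, $S_t$ fix the constants, by $\mathbb{Q}[t]$-linearity it suffices to check $S_t(\nu(w))=\nu(S_t(w))$ for a homogeneous word $w=yv$ of weight $n$, with $v\in\{x,y\}^*$.

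The key technical input is that $\sigma_t$ commutes with $R$, i.e. $R\circ\sigma_t=\sigma_t\circ R$. Since $\sigma_t$ is an automorphism and $R$ an anti-automorphism, this reduces to verifying $R(\sigma_t(a))=\sigma_t(a)$ on the two letters $a\in\{x,y\}$, which holds because $\sigma_t(x)=x$ and $\sigma_t(y)=tx+y$ are each fixed by $R$. I would also record that $\sigma_t$ and hence $S_t$ preserve the weight grading, as $\sigma_t(x)$ and $\sigma_t(y)$ are homogeneous of weight one; this guarantees that the scalar sign $(-1)^n$ can be pulled through freely.

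With these in hand the computation is short. On one side, using $S_t(y\,\cdot\,)=y\sigma_t(\,\cdot\,)$,
\begin{align*}
 S_t(\nu(w))=(-1)^n S_t\bigl(yR(v)\bigr)=(-1)^n\,y\,\sigma_t\bigl(R(v)\bigr).
\end{align*}
On the other side, $S_t(w)=y\sigma_t(v)$ is again homogeneous of weight $n$, so the rewriting formula applies and yields
\begin{align*}
 \nu(S_t(w))=\nu\bigl(y\sigma_t(v)\bigr)=(-1)^n\,y\,R\bigl(\sigma_t(v)\bigr).
\end{align*}
The two right-hand sides coincide by $R\circ\sigma_t=\sigma_t\circ R$, completing the argument.

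The only delicate point is establishing the rewriting $\nu(yu)=(-1)^{\wt(yu)}\,y\,R(u)$, i.e. that reversing the sequence of $z$-blocks matches word reversal once the leading letter $y$ is stripped off and reattached; this is a direct check on $z$-words, but it must be phrased so that the sign is attached only to genuinely homogeneous pieces, after which the whole identity is forced by the commutation of $\sigma_t$ with $R$. An alternative, more combinatorial route would identify $\fH^1\cong\cI$, use that $S_t(z_{k_1}\cdots z_{k_r})$ is precisely the $t$-index $(k_1,\ldots,k_r)^t$, and observe that reversing a merged index and summing over all comma/plus patterns recovers the $t$-index of the reversed index, because the set of patterns is reversal-symmetric and the number of pluses is reversal-invariant; I prefer the $R$-based argument, since it avoids expanding these sums explicitly.
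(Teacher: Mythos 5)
Your proof is correct and follows essentially the same route as the paper: the paper reduces by $\bQ[t]$-linearity to the generators $z_{k_1}\cdots z_{k_r}$ and declares the resulting identity obvious, while you carry out that very check, organizing it via the reversal anti-automorphism $R$ and the observation that $R\circ\sigma_t=\sigma_t\circ R$ since $\sigma_t(x)=x$ and $\sigma_t(y)=tx+y$ are fixed by $R$. You supply exactly the details the paper omits --- the identification $\nu(yu)=(-1)^{\wt(yu)}\,y\,R(u)$ and the uniform-sign point for homogeneous pieces are both handled correctly.
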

\begin{proof}
 Since $S_t$ and $\nu$ are $\bQ[t]$-linear, it suffices to prove that 
 \begin{align*}
 S_t\circ \nu (z_{k_1}\cdots z_{k_r})=\nu \circ S_t(z_{k_1}\cdots z_{k_r})
 \end{align*}
 for positive integers $k_1, \ldots, k_r$ and this is obvious.
\end{proof}
\begin{lem}\label{lem:prodS_t}
 For $w_1, w_2 \in y\fH_t$, we have
 \begin{align}\label{prodS_t}
  S_t(w_1)S_t(w_2)=S_t(w_1w_2-w_1xL^{-1}_y(w_2t)),
 \end{align}
 where we understand $w_1xL_{y}^{-1}(w_2t)=0$ when $w_1\in\mathbb{Q}[t]$.
\end{lem}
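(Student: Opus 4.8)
The plan is to reduce the identity to one algebraic fact: $\sigma_t$ is a $\bQ[t]$-algebra automorphism of $\fH_t$ satisfying $\sigma_t(y-tx)=y$. Because $S_t$, the concatenation product, and $L^{-1}_y$ are all $\bQ[t]$-linear (resp.\ bilinear), it suffices to verify \eqref{prodS_t} for $w_1=yu_1$ and $w_2=yu_2$ with arbitrary $u_1,u_2\in\fH_t$; this is precisely the general shape of an element of $y\fH_t$, so no reduction to monomials is needed. The degenerate case $w_1\in\bQ[t]$ only occurs for $w_1=0$ here (a nonzero element of $y\fH_t$ begins with $y$ and is not a scalar), so the stated convention makes that case trivial.

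First I would rewrite the argument of $S_t$ on the right-hand side. Since $t$ is central and $L^{-1}_y(yv)=v$, one gets $L^{-1}_y(w_2t)=tu_2$, hence
\[
 w_1w_2-w_1xL^{-1}_y(w_2t)=yu_1yu_2-t\,yu_1xu_2=y\,u_1(y-tx)u_2.
\]
The whole point of the correction term $w_1xL^{-1}_y(w_2t)$ is that it replaces the interior letter $y$ by $-tx$, producing the factor $(y-tx)$.

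Next I would apply $S_t$. Writing the element as $y\cdot\bigl(u_1(y-tx)u_2\bigr)$ and using the defining rule $S_t(yv)=y\sigma_t(v)$ together with the homomorphism property of $\sigma_t$, I obtain
\[
 S_t\bigl(w_1w_2-w_1xL^{-1}_y(w_2t)\bigr)=y\,\sigma_t(u_1)\,\sigma_t(y-tx)\,\sigma_t(u_2).
\]
The key identity $\sigma_t(y-tx)=\sigma_t(y)-t\sigma_t(x)=(tx+y)-tx=y$ then collapses the middle factor back to $y$, giving $y\,\sigma_t(u_1)\,y\,\sigma_t(u_2)$. On the other hand, the left-hand side is $S_t(yu_1)S_t(yu_2)=\bigl(y\sigma_t(u_1)\bigr)\bigl(y\sigma_t(u_2)\bigr)=y\,\sigma_t(u_1)\,y\,\sigma_t(u_2)$, and comparing the two expressions finishes the proof.

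The computation is essentially obstacle-free; the only thing requiring care is the bookkeeping in the first display, namely recognizing that $w_1xL^{-1}_y(w_2t)$ is exactly the term needed so that after applying $\sigma_t$ the problematic summand $tx$ in $\sigma_t(y)=y+tx$ is cancelled. Everything else is formal manipulation licensed by the $\bQ[t]$-linearity of $S_t$ and the multiplicativity of $\sigma_t$.
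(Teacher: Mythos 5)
Your proof is correct, and it is a genuine streamlining of the paper's argument rather than a copy of it. The paper first disposes of the case $w_1\in\bQ[t]$, then reduces by $\bQ[t]$-bilinearity to monomials $w_1=z_{k_1}\cdots z_{k_r}$, $w_2=z_{l_1}\cdots z_{l_s}$, writes out both sides of \eqref{prodS_t} as explicit alternating words in $x$ and $\sigma_t(y)$, and spots the cancellation $\sigma_t(y)x^{l_1-1}-tx^{l_1}=yx^{l_1-1}$ at the junction of the two words. You avoid the monomial reduction altogether by writing $w_1=yu_1$, $w_2=yu_2$ (which, as you note, is already the general element of $y\fH_t$), factoring the argument of $S_t$ on the right-hand side as $y\,u_1(y-tx)u_2$, and invoking the multiplicativity of the automorphism $\sigma_t$ together with $\sigma_t(y-tx)=y$. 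The underlying mechanism is identical in both proofs --- the correction term $w_1xL^{-1}_y(w_2t)$ exists precisely to replace the interior letter $y$ by $y-tx$, which $\sigma_t$ sends back to $y$ --- but your packaging exhibits it in one line, where the paper's version displays it inside explicit words at the cost of heavier bookkeeping. Your handling of the degenerate case is also accurate: inside $y\fH_t$ the only element of $\bQ[t]$ is $0$, so the stated convention is vacuous there; the paper's separate treatment of $w_1\in\bQ[t]$ is really aimed at the implicit extension to $\fH^1_t$, which is needed later when Theorem \ref{shF} is applied with $w_1=1$, and your argument covers that case trivially as well, since $S_t$ fixes $\bQ[t]$ and both sides then reduce to $w_1S_t(w_2)$.
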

\begin{proof}
 When $w_1\in \bQ[t]$, we easily see the lemma holds.
 It suffices to prove the statement for $w_1=z_{k_1}\cdots z_{k_r}$ and $w_2=z_{l_1}\cdots z_{l_s}$ with $r,s \in \bZ_{\ge1}$ and $k_1, \ldots, k_r, l_1, \ldots, l_s \in \bZ_{\ge1}$. 
Recall $\sigma_t(y)=tx+y$. Then, we see that the left-hand side of \eqref{prodS_t} is
\begin{align} \label{LHS of S_t}
S_t(w_1)S_t(w_2)
=yx^{k_1-1}\sigma_t(y)x^{k_2-1}\cdots \sigma_t(y)x^{k_r-1}yx^{l_1-1}\sigma_t(y)x^{l_2-1}\cdots \sigma_t(y)x^{l_s-1}.
\end{align}
On the other hand, the right-hand side of \eqref{prodS_t} is 
\begin{align} \label{RHS of S_t}
&S_t(w_1w_2-w_1xL^{-1}_y(w_2t))\\
&=yx^{k_1-1}\sigma_t(y)x^{k_2-1}\cdots \sigma_t(y)x^{k_r-1}\sigma_t(y)x^{l_1-1}\cdots\sigma_t(y)x^{l_s-1} \nonumber \\
&\quad -yx^{k_1-1}\sigma_t(y)x^{k_2-1}\cdots \sigma_t(y)x^{k_{r-1}-1}\sigma_t(y)x^{k_r+l_1-1}\sigma_t(y)x^{l_2-1}\cdots\sigma_t(y)x^{l_s-1}t\nonumber \\
&= yx^{k_1-1}\sigma_t(y)x^{k_2-1}\cdots \sigma_t(y)x^{k_r-1}\{\sigma_t(y)x^{l_1-1}-tx^{l_1}\}\sigma_t(y)x^{l_2-1}\cdots \sigma_t(y)x^{l_2-1}. \nonumber
\end{align}
Since $\sigma_t(y)x^{l_1-1}-tx^{l_1}=yx^{l_1-1}$, \eqref{LHS of S_t} and \eqref{RHS of S_t} coincide. This completes the proof.
\end{proof}
\begin{proof}[Proof of Theorem \ref{shF}]
We denote by $\sh$ the extension to the $\bQ[t]$-linear map $\sh \colon \fH_t \times \fH_t \rightarrow \fH_t$.
Then, by using Lemma \ref{lem:prodS_t}, and $S_t(w_1\overset{t}{\sh}w_2)=S_t(w_1)\sh S_t(w_2) \; (w_1, w_2 \in \fH^1_t)$ (\cite[Proposition 2.1]{LQ17}), we have
 \begin{align*}
 \mathit{Z}^{t}_{\cF}(w_1\overset{t}{\sh}w_2)
 &=Z_{\cF}\bigl(S_t(w_1\overset{t}{\sh}w_2)\bigr)\\
 &=Z_{\cF}\bigl(S_t(w_1)\sh S_t(w_2)\bigr)\\
 &=Z_{\cF}\bigl(S_t(w_1)\nu(S_t(w_2))\bigr)\\
 &=Z_{\cF}\bigl(S_t(w_1)S_t(\nu(w_2))\bigr)\\
 &=Z_{\cF}\bigl(S_t(w_1\nu(w_2)-w_1xL^{-1}_y(\nu(w_2)t))\bigr)\\
 &=\mathit{Z}^{t}_{\cF}\bigl(w_1\nu(w_2)-w_1xL^{-1}_y(\nu(w_2)t)\bigr).
 \end{align*}
This completes the proof.
\end{proof}
\begin{rem}
 The shuffle relation for MZVs can be proved by the iterated integral expression of MZVs. 
 Komori--Matsumoto--Tsumura \cite[Theorem 2]{KMT11} obtained this relation using not the iterated integral but a certain partial fraction decomposition. 

 On the other hand, its counterpart for FMZVs (more precisely, the case $t=0$ in Theorem \ref{shF}) was proved by Kaneko--Zagier \cite{KZ19}. 
An alternative proof for $\cA$-MZVs was obtained by the second named author \cite[Corollary 4.1]{Ono17}, and 
for $\cS$-MZVs, several works are known (see Jarossay \cite[Th\'{e}or\`{e}me 1.7]{Jar14}, Hirose \cite[Theorem 7]{Hir19}, and Ono--Seki--Yamamoto \cite[Theorem 3.9]{OSY19}). 
\end{rem}

\subsection{Duality relation}
We define a $\bQ$-linear isomorphism $\alpha\colon\fH \rightarrow \fH$ by interchanging $x$ and $y$, and $\alpha(1)\coloneqq 1$.
We also define a $\bQ$-linear map $\widetilde{\alpha}\colon y\fH \rightarrow y\fH$ by $\widetilde{\alpha}(yw)=y\alpha(w)$ for $w \in \fH$.
The duality relation for FMZSVs, obtained by Hoffman \cite[Theorem 4.6]{Hof15} and Jarossay \cite[Corollaire 1.12]{Jar14}, is the following equality:
\begin{align*} 
 \mathit{Z}^{1}_{\cF}(w)
 =-\mathit{Z}^{1}_{\cF}(\widetilde{\alpha}(w)) \quad (w \in y\fH).
\end{align*}
Note that we can rewrite this equality as
\begin{align*} 
 \zeta^{\star}_{\cF}(\bk)=-\zeta^{\star}_{\cF}(\bk^\vee).
\end{align*}
It is also known that there is another expression of the above duality relation, i.e.,  
\begin{align} \label{phiF}
 Z_{\cF}(w)=Z_{\cF}(\phi(w))
\end{align}
holds (for details, see Saito \cite[Corollary 2.15]{Sai17}, for example).
Here, the map $\phi\colon\fH \rightarrow \fH$ is a $\bQ$-linear isomorphism defined by $\phi(x)=x+y$ and $\phi(y)=-y$. 
 
We introduce 
\begin{align*}
 \phi^t\coloneqq -S_{-t}\circ \phi \circ S_t,
\end{align*}
which was defined by Tanaka--Wakabayashi in \cite[eq.(3)]{TW16}. 
Then we can state the duality relation for $t$-FMZVs.
\begin{thm}[Duality relation]\label{duality}
For $w \in \fH^1_t$, we have
 \begin{align*}
 \mathit{Z}^{t}_{\cF}(w)=-\mathit{Z}^{t}_{\cF}(\phi^t(w)).
 \end{align*}
\end{thm}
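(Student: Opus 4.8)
The plan is to reduce the duality relation for $t$-FMZVs to the known duality relation \eqref{phiF} for FMZVs, namely $Z_{\cF}(w) = Z_{\cF}(\phi(w))$, by exploiting the identity $\mathit{Z}^t_{\cF} = Z_{\cF} \circ S_t$ established earlier in this section. The key structural fact I would use is the definition $\phi^t = -S_{-t} \circ \phi \circ S_t$ together with the invertibility $S_t^{-1} = S_{-t}$, both of which are already available. The goal is to show that conjugating $\phi$ by $S_t$ transports the FMZV duality into the $t$-FMZV duality.

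First I would write out the right-hand side using the definitions. For $w \in \fH^1_t$, we have
\begin{align*}
 -\mathit{Z}^{t}_{\cF}(\phi^t(w))
 &= -Z_{\cF}\bigl(S_t(\phi^t(w))\bigr) \\
 &= -Z_{\cF}\bigl(S_t(-S_{-t}(\phi(S_t(w))))\bigr) \\
 &= Z_{\cF}\bigl(S_t \circ S_{-t}(\phi(S_t(w)))\bigr) \\
 &= Z_{\cF}\bigl(\phi(S_t(w))\bigr),
\end{align*}
where in the last step I use $S_t \circ S_{-t} = \mathrm{id}$. Now applying the FMZV duality \eqref{phiF} to the element $S_t(w)$ gives $Z_{\cF}(\phi(S_t(w))) = Z_{\cF}(S_t(w))$, and finally $Z_{\cF}(S_t(w)) = \mathit{Z}^t_{\cF}(w)$ by the relation $\mathit{Z}^t_{\cF} = Z_{\cF} \circ S_t$. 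Chaining these equalities yields $-\mathit{Z}^t_{\cF}(\phi^t(w)) = \mathit{Z}^t_{\cF}(w)$, which is exactly the claim.

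The one point that requires care, rather than genuine difficulty, is ensuring that the domains match: the duality \eqref{phiF} is stated for $w \in \fH$ (or its restriction to $\fH^1$ under the identification with $\cI$), so I must verify that $S_t(w) \in \fH^1$ lands in the subspace where \eqref{phiF} applies, and that $\phi$ maps back appropriately. Since $S_t(\fH^1_t) = \fH^1_t$ was recorded earlier in the excerpt, and $Z_{\cF}$ is defined on $\fH^1_t$ via the $\bQ[t]$-linear extension, the map $\phi$ (which sends $x \mapsto x+y$, $y \mapsto -y$) preserves the relevant grading and subalgebra structure, so the composition is well-defined. I expect the main obstacle to be purely bookkeeping: confirming that $\phi$, as originally defined on $\fH$ over $\bQ$, extends $\bQ[t]$-linearly and commutes correctly with the scalar $t$ appearing inside $S_t$, and that no spurious boundary terms arise from the constant part $\bQ[t]$. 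Once the $\bQ[t]$-linearity of all four maps $S_t, S_{-t}, \phi, Z_{\cF}$ is invoked, the argument collapses to the short algebraic chain above.
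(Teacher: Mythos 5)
Your proposal is correct and is essentially the paper's own proof: both reduce the claim to the FMZV duality \eqref{phiF} via the identities $\mathit{Z}^t_{\cF}=Z_{\cF}\circ S_t$, $\phi^t=-S_{-t}\circ\phi\circ S_t$, and $S_t\circ S_{-t}=\mathrm{id}$. The only cosmetic difference is that the paper substitutes $w\mapsto S_{-t}(w)$ and declares the resulting equality equivalent to \eqref{phiF}, whereas you unwind the composition directly on $w$; the ingredients and the logic are identical.
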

\begin{proof}
Since $y\fH_tx=S_{-t}(y\fH_tx)$, the statement is equivalent to the equality
 \begin{align}\label{duality S_-t}
 \mathit{Z}^{t}_{\cF}(S_{-t}(w))=-\mathit{Z}^{t}_{\cF}(\phi^tS_{-t}(w))
 \end{align}
 for all $w \in \fH^1_t$. Since $\mathit{Z}^{t}_\cF=Z_{\cF}\circ S_t$ and $\phi^t=-S_{-t}\circ \phi \circ S_t$, 
 the equality \eqref{duality S_-t} is equivalent to \eqref{phiF}. 
 This completes the proof.
\end{proof}
\begin{rem}
 Theorem \ref{duality} with $t=0$ implies the duality relation for FMZ(S)Vs \eqref{phiF}.
\end{rem}
\begin{rem}  
There are several alternative proofs of the duality relation for FMZ(S)Vs. 
See Bachmann--Takeyama--Tasaka \cite[Theorem 2.15]{BTT18}, Seki \cite[p.28]{Sek17}, and Seki--Yamamoto \cite[Corollary 2.2]{SY19-2} for $\cA$-MZ(S)Vs, and Bachmann--Takeyama--Tasaka \cite[Corollary 2.17]{BTT18} and Hirose \cite[Theorem 8]{Hir19} for $\cS$-MZ(S)Vs.
\end{rem}


\subsection{Derivation relation}
A derivation $\partial$ is a $\mathbb{Q}$-linear map on $\mathfrak{H}$ satisfying the Leibniz's rule $\partial(ww')=\partial(w)w'+w\partial(w') \; (w, w' \in \mathfrak{H})$. We can extend a derivation $\partial$ on $\mathfrak{H}$ to a $\mathbb{Q}[t]$-linear map on $\mathfrak{H}_t$. 

For a positive integer $l$, we define the derivation $\partial_l$ on $\mathfrak{H}_t$ by 
 \[
 \partial_l(x)=y(x+y)^{l-1}x,\quad
 \partial_l(y)=-y(x+y)^{l-1}x.
 \]
We set $\partial^t_l\coloneqq S_{-t}\circ \partial_l \circ S_t$ (see Li \cite{Li19}). 
Then we can prove that $\partial^t_l$ is a left $\widetilde{S}_t$-derivation on $\fH_t$ and we have
 \[
 \partial^t_l(x)= y(y+x-tx)^{l-1}x, \quad \partial^t_l(y)= -y(y+x-tx)^{l-1}x.
 \]
Note that this $\partial^t_l$ is not a derivation in general but an $\widetilde{S}_t$-derivation, which is a kind of a twist of the usual derivation (for the details on $\widetilde{S_t}$-derivation, see Li \cite{Li19} and Li--Qin \cite{LQ16}). We define $\bQ$-linear map $R^{-1}_x\colon\fH_t x \rightarrow \fH_t$ by $R^{-1}_x(wx)\coloneqq w \; (w \in \fH_t)$.
\begin{thm}[Derivation relation]\label{derF}
 For a positive integer $l$, we have
  \begin{align*}
  \mathit{Z}^{t}_{\mathcal{F}}(R^{-1}_x \partial^t_l(w))=0 \; (w \in y\fH_t x).
  \end{align*}
\end{thm}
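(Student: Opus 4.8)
The plan is to reduce Theorem~\ref{derF} to the $t=0$ derivation relation for FMZVs, namely the identity $Z_{\cF}(R^{-1}_x\partial_l(v))=0$ for $v\in y\fH x$ (extended $\bQ[t]$-linearly to $v\in y\fH_t x$), and then to deduce the general statement by chasing the defining relations $\partial^t_l=S_{-t}\circ\partial_l\circ S_t$ and $\mathit{Z}^t_{\cF}=Z_{\cF}\circ S_t$ through the identity $S^{-1}_t=S_{-t}$. This mirrors the way Li~\cite{Li19} handles the $t$-MZV case, so the $t$-deformation should be essentially formal once the undeformed relation is available.

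First I would check that $R^{-1}_x\partial^t_l(w)$ is well defined for $w\in y\fH_t x$. Since $\partial_l(x)=y(x+y)^{l-1}x$ and $\partial_l(y)=-y(x+y)^{l-1}x$ both begin with $y$ and end with $x$, the Leibniz rule shows that the derivation $\partial_l$ sends $y\fH_t x$ into itself. Combining this with the stability $S_t(y\fH_t x)=y\fH_t x$ recorded in the excerpt (and its analogue for $S_{-t}=S^{-1}_t$), we get $\partial^t_l(w)=S_{-t}\partial_l S_t(w)\in y\fH_t x$, so that stripping the final $x$ via $R^{-1}_x$ lands in $y\fH_t\subset\fH^1_t$, where $\mathit{Z}^t_{\cF}$ is defined.

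The one nontrivial point is that $S_t$ commutes with $R^{-1}_x$ on $y\fH_t x$. Writing $v=yv'x$ and using $\sigma_t(x)=x$, one has $S_t(v)=y\sigma_t(v')\sigma_t(x)=y\sigma_t(v')x$, hence $R^{-1}_xS_t(v)=y\sigma_t(v')=S_t(yv')=S_tR^{-1}_x(v)$. Granting this, the computation closes at once:
\begin{align*}
 \mathit{Z}^t_{\cF}\bigl(R^{-1}_x\partial^t_l(w)\bigr)
 &=Z_{\cF}\bigl(S_tR^{-1}_x\partial^t_l(w)\bigr)
  =Z_{\cF}\bigl(R^{-1}_xS_t\partial^t_l(w)\bigr)\\
 &=Z_{\cF}\bigl(R^{-1}_x\partial_lS_t(w)\bigr)=0,
\end{align*}
where the third equality uses $S_t\partial^t_l=S_tS_{-t}\partial_lS_t=\partial_lS_t$ (as $S^{-1}_t=S_{-t}$), and the final equality is the $t=0$ derivation relation applied to $S_t(w)\in y\fH_t x$.

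The real content, and the main obstacle, lies in the $t=0$ input $Z_{\cF}(R^{-1}_x\partial_l(v))=0$; once this is secured the deformation above is purely formal. I would establish it either by invoking the derivation relation for $\cA$- and $\cS$-MZVs directly, or, absent a clean reference, by deriving it inside the algebraic framework of Section~5, for instance from the shuffle relation (Theorem~\ref{shF}) together with the duality relation (Theorem~\ref{duality}), in analogy with the classical passage from regularized double shuffle and duality to the derivation relation for MZVs. Particular care is needed because $\mathit{Z}_{\cF}$, unlike its archimedean counterpart, is defined on all of $\fH^1$ with no admissibility constraint; this is exactly why the trailing-$x$ removal $R^{-1}_x$ appears in the statement in place of the inclusion $\partial_l(w)\in\fH^0$ used in the MZV setting.
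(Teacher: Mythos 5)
Your proof is correct and follows essentially the same route as the paper: both reduce the statement to the $t=0$ derivation relation for FMZVs by combining $\partial^t_l=S_{-t}\circ\partial_l\circ S_t$, $\mathit{Z}^{t}_{\cF}=Z_{\cF}\circ S_t$, $S_t^{-1}=S_{-t}$, and the commutation of $S_{\pm t}$ with $R^{-1}_x$ on $y\fH_t x$. The $t=0$ input you flag as the ``main obstacle'' is precisely what the paper invokes as a known result, namely the derivation relation for finite multiple zeta values proved in \cite{Mur17}, so no further derivation from the shuffle and duality relations is needed.
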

\begin{proof}[Proof of Theorem \ref{derF}]
Since $S_t(y\mathfrak{H}_tx)=y\mathfrak{H}_tx$ from \cite[\S2.1]{LQ17} and the definition of $\partial^t_l=S_{-t}\circ \partial_l \circ S_t$ \cite[p.7]{Li19}, it suffices to prove the equality
\begin{align}\label{derF equiv}
\mathit{Z}^{t}_{\mathcal{F}}(R^{-1}_xS_{-t}\partial_l(w))=0
\end{align}
for all $w \in y\fH_t x$. Moreover, from the easy fact $R^{-1}_xS_{-t}=S_{-t}R^{-1}_x$ on $y\mathfrak{H}_tx$ and the relation of $\mathit{Z}^{t}_{\mathcal{F}}=Z_{\mathcal{F}}\circ S_t$, the equality \eqref{derF equiv} is equivalent to 
\[
Z_{\mathcal{F}}(R^{-1}_x\partial_l(w))=0,
\]
which is just the derivation relation for $Z_{\mathcal{F}}$ proved by the first named author in \cite{Mur17}.
\end{proof}
\begin{rem}
 The derivation relation for MZV was proved by Ihara--Kaneko--Zagier \cite[Corollary 6]{IKZ06} and Horikawa--Murahara--Oyama \cite[\S 4]{HMO18} gave another several proofs. 
 Note that Bachmann--Tanaka \cite[Theorem 1.4]{BT18}, Hirose--Murahara--Murakami \cite[\S 5.3]{HMM19} also gave another proof of the derivation relation for MZVs. Li \cite[Theorem 2.3 (6)]{Li19} proved the derivation relation for $t$-MZVs.
On the other hand, the derivation relation for FMZVs was proved by the first named author \cite[Theorem 2.1]{Mur17} and Horikawa--Murahara--Oyama \cite[\S 5]{HMO18} gave another several proofs. Note that one of their result in \cite[\S 5]{HMO18} is due to K. Ihara.
Also note that Hirose--Sato (unpublished) gave a simultaneous generalization of the derivation relations for MZVs and $\cS$-MZVs.
\end{rem}
By applying Theorem \ref{derF} for $l=1$ and $w=z_{k_1}\cdots z_{k_r}$ with $k_1, \ldots, k _r\in \bZ_{\ge1}$ and $k_r\ge2$, we obtain the Hoffman relation for $t$-FMZVs. 
\begin{cor}[Hoffman's relation] \label{hoffmanF}
 For a non-empty index $(k_1,\dots,k_r)$ with $k_r\ge2$, we have
 \begin{align*}
  &\sum_{i=1}^{r} (1+(k_i+\delta_{i,1}-2)t) 
   \zeta_\mathcal{F}^t (k_1,\dots,k_{i-1},k_{i}+1, k_{i+1},\dots,k_{r-1},k_r-1) \\
  &=\sum_{i=1}^{r} \sum_{j=2}^{k_i}
   \zeta_\mathcal{F}^t (k_1,\dots,k_{i-1},k_{i}+1-j,j,k_{i+1},\dots,k_{r-1},k_r-1) \\
  &\quad +t(1-t) \sum_{i=1}^{r-1} 
   \zeta_\mathcal{F}^t (k_1,\dots,k_{i-1},k_{i}+k_{i+1}+1,k_{i+2},\dots,k_{r-1},k_r-1).
 \end{align*}
\end{cor}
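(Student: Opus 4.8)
The plan is to prove Corollary \ref{hoffmanF} by evaluating $R^{-1}_x\partial^t_1(w)$ explicitly for $w=z_{k_1}\cdots z_{k_r}=yx^{k_1-1}yx^{k_2-1}\cdots yx^{k_r-1}$ and then invoking Theorem \ref{derF}. Since $k_r\ge2$, the word $w$ lies in $y\fH_t x$, so Theorem \ref{derF} applies and yields $\mathit{Z}^{t}_{\cF}(R^{-1}_x\partial^t_1(w))=0$; the entire content of the corollary is therefore the identification of $R^{-1}_x\partial^t_1(w)$, under $\mathit{Z}^{t}_{\cF}$, with the difference of the two sides of the asserted equality. As the statement is genuinely $t$-dependent, I cannot merely pull back the $t=0$ Hoffman relation for FMZVs; I must carry out the computation of $\partial^t_1$ in full.

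I would compute $\partial^t_1=S_{-t}\circ\partial_1\circ S_t$ in three passes, exploiting that $\partial_1$ is an honest derivation with $\partial_1(x)=yx$ and $\partial_1(y)=-yx$. First, from $S_t(yu)=y\sigma_t(u)$ together with $\sigma_t(x)=x$, $\sigma_t(y)=tx+y$, one gets $S_t(w)=yx^{k_1-1}(tx+y)x^{k_2-1}(tx+y)\cdots(tx+y)x^{k_r-1}$: only the leading $y$ survives untouched, while each of the remaining $r-1$ occurrences of $y$ is replaced by $tx+y$. Second, applying the Leibniz rule slot by slot and using $\partial_1(tx+y)=(t-1)yx$, every differentiation inserts a factor $yx$ with coefficient $+1$, $-1$, or $t-1$ according as the differentiated slot is an interior $x$, the leading $y$, or an interior $tx+y$. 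Third, $S_{-t}$ fixes the leading $y$ and sends every other $y$ to $-tx+y$; in particular it reverts the interior $tx+y$ to $y$, while the freshly inserted non-leading $y$ splits as $-tx+y$, so the inserted $yx$ becomes $-tx^2+yx$. Finally $R^{-1}_x$ strips the trailing $x$, which is present in every resulting word precisely because $k_r\ge2$, leaving last component $k_r-1$ throughout.

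Translating back to indices, the three families of the statement emerge as follows. The $+yx$ part produced at an interior $x$ of block $i$ splits $k_i$ into $(k_i+1-j,\,j)$ with $j=2,\dots,k_i$ and coefficient $+1$, giving the right-hand double sum. The ``raise'' terms $(k_1,\dots,k_i+1,\dots,k_r-1)$ collect two contributions: differentiating the opening $y$ of block $1$ (coefficient $-1$) or the opening $tx+y$ of a block $i\ge2$ (coefficient $t-1$), together with the $-tx^2$ half arising when $S_{-t}$ acts on the inserted $y$ of each of the $k_i-1$ interior $x$'s (coefficient $-t$ each); summing gives $-1-(k_i-1)t$ for $i=1$ and $(t-1)-(k_i-1)t$ for $i\ge2$, i.e. $-\bigl(1+(k_i+\delta_{i,1}-2)t\bigr)$, the Kronecker symbol recording that the leading $y$ alone escapes the $\sigma_t$/$\sigma_{-t}$ substitution. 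Finally, the $-tx^2$ half produced by $S_{-t}$ on the inserted $y$ coming from an opening $tx+y$ deletes that block's opening $y$ and merges it with the preceding block into $k_i+k_{i+1}+1$, with coefficient $(t-1)(-t)=t(1-t)$. Setting the total to $0$ by Theorem \ref{derF} and transposing the two negatively-signed families reproduces the corollary.

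The main obstacle is the coefficient bookkeeping in the second and third passes. Three independent sources of powers of $t$ interact—the $tx+y$ substitutions inside $S_t(w)$, the factor $t-1$ from differentiating such a slot, and the $-tx+y$ splitting under $S_{-t}$—and one must track their products and signs carefully to see that they collapse into the clean coefficients $1+(k_i+\delta_{i,1}-2)t$ and $t(1-t)$ and that all apparent cross terms reassemble into exactly the three displayed sums. The only genuinely delicate points are verifying that the boundary indices (those involving the last block, where a raise or a merge interacts with the $R^{-1}_x$ step) are indexed consistently and that $R^{-1}_x$ is legitimate on every word, i.e.\ that each ends in $x$; once these are settled the conclusion follows at once from Theorem \ref{derF}.
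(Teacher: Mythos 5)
Your proposal is correct and takes essentially the same route as the paper: the paper's entire proof of Corollary \ref{hoffmanF} is the single remark that one applies Theorem \ref{derF} with $l=1$ to $w=z_{k_1}\cdots z_{k_r}$ (with $k_r\ge2$), leaving the expansion of $R^{-1}_x\partial^t_1(w)$ implicit. Your explicit three-pass computation of $S_{-t}\circ\partial_1\circ S_t$ --- the split terms from interior $x$'s, the raise coefficients $-(1+(k_i+\delta_{i,1}-2)t)$ from the leading $y$, the slots $tx+y$, and the $-tx^2$ corrections, and the merge coefficient $(t-1)(-t)=t(1-t)$ --- is exactly the bookkeeping the paper omits, and it is accurate.
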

\begin{rem}
 Hoffman's relation for MZVs was proved by Hoffman \cite[Theorem 5.1]{Hof92}, which is of the same form of Corollary \ref{hoffmanF} with $t=0$.
 Another proof of Hoffman's relation for MZVs was given by Hoffman--Ohno \cite[Theorem 2.1]{HO03}. 
 Hoffman's relation for MZSVs was proved by Muneta \cite[Theorem 3.1]{Mun09} and Wakabayashi \cite[Theorem 1.1]{Wak12}. 
 These results were interpolated by Wakabayashi \cite[Corollary 1.2]{Wak17} and by Li--Qin \cite[Theorem 2.5]{LQ17} independently. 
\end{rem}

\section*{Acknowledgement}
The authors would like to express their gratitude to the referee and the communicator for many helpful comments.


\end{document}